\definecolor{cadmiumgreen}{rgb}{0.0, 0.42, 0.24}
\numberwithin{equation}{section}
\newtheorem{lettertheo}{Theorem}
\newtheorem{theorem}{Theorem}[section]
\newtheorem{lemma}[theorem]{Lemma}
\newtheorem{corollary}[theorem]{Corollary}
\newtheorem{proposition}[theorem]{Proposition}
\theoremstyle{definition} 
\newtheorem{definition}[theorem]{Definition}
\newtheorem{notation}[theorem]{Notation}
\newtheorem{remark}[theorem]{Remark}
\newtheorem{example}[theorem]{Example}
\newcommand{\tbb}{\textcolor{black}}
\newcommand{\act}{\curvearrowright}
\DeclareMathOperator{\ad}{ad}
\newcommand{\al}{\alpha}
\DeclareMathOperator{\Aut}{Aut}
\newcommand{\cC}{\mathcal C}
\newcommand{\fC}{\mathfrak C}
\newcommand{\cCF}{\mathcal{CF}}
\DeclareMathOperator{\Coc}{Coc}
\newcommand{\cD}{\mathcal D}
\DeclareMathOperator{\End}{End}
\newcommand{\cF}{\mathcal F}
\DeclareMathOperator{\Fix}{Fix}
\DeclareMathOperator{\Frac}{Frac}
\newcommand{\fs}{\{0,1\}^{(\N)} }
\newcommand{\is}{\{0,1\}^{\N} }
\newcommand{\cG}{\mathcal G}
\DeclareMathOperator{\Gr}{Gr}
\newcommand{\ga}{\gamma}
\newcommand{\Ga}{\Gamma}
\DeclareMathOperator{\Hilb}{Hilb}
\DeclareMathOperator{\Hom}{Hom}
\DeclareMathOperator{\Homeo}{Homeo}
\newcommand{\cI}{\mathcal I}
\DeclareMathOperator{\id}{id}
\DeclareMathOperator{\Inn}{Inn}
\DeclareMathOperator{\Isom}{Isom}
\newcommand{\cK}{\mathcal K}
\newcommand{\La}{\Lambda}
\DeclareMathOperator{\Leaf}{Leaf}
\DeclareMathOperator{\Leb}{Leb}
\newcommand{\N}{\mathbf{N}}
\newcommand{\cN}{\mathcal{N}}
\newcommand{\NCV}{N_{H(\fC)}(V)}
\DeclareMathOperator{\Out}{Out}
\newcommand{\ot}{\otimes}
\newcommand{\ov}{\overline}
\newcommand{\Q}{\mathbf{Q}}
\newcommand{\Qt}{{\mathbf{Q}_2}}
\newcommand{\cR}{\mathcal{R}}
\newcommand{\cS}{\mathcal S}
\DeclareMathOperator{\Stab}{Stab}
\DeclareMathOperator{\supp}{supp}
\newcommand{\fT}{\mathfrak T}
\newcommand{\ti}{\tilde}
\newcommand{\cU}{\mathcal U}
\newcommand{\varep}{\varepsilon}
\newcommand{\Z}{\mathbf{Z}}
\begin{document}

\title[Classification of fraction groups II]{Classification of Thompson related groups arising from Jones technology II}
\thanks{
AB is supported by the Australian Research Council Grant DP200100067 and a University of New South Wales Sydney start-up grant.}
\author{Arnaud Brothier}
\address{Arnaud Brothier\\ School of Mathematics and Statistics, University of New South Wales, Sydney NSW 2052, Australia}
\email{arnaud.brothier@gmail.com\endgraf
\url{https://sites.google.com/site/arnaudbrothier/}}
\maketitle

\begin{abstract}
In this second article, we continue to study classes of groups constructed from a functorial method due to Vaughan Jones. 
A key observation of the author shows that these groups have remarkable diagrammatic properties that can be used to deduce their properties.
Given any group and two of its endomorphisms, we construct a semidirect product.
In our first article dedicated to this construction, we classify up to isomorphism all these semidirect products when one of the endomorphisms is trivial and described their automorphism group.

In this article we focus on the case where both endomorphisms are automorphisms. 
The situation is rather different and we obtain semidirect products where the largest Richard Thompson's group $V$ is acting on some discrete analogues of loop groups.
Note that these semidirect products appear naturally in recent constructions of quantum field theories. 
Moreover, they have been previously studied by Tanushevski and can be constructed via the framework of cloning systems of Witzel-Zaremsky. 
In particular, they provide examples of groups with various finiteness properties and possible counterexamples of a conjecture of Lehnert on co-context-free groups.

We provide a partial classification of these semidirect products and describe explicitly their automorphism group.
Moreover, we prove that groups studied in the first and second articles are never isomorphic to each other nor admit nice embeddings between them.
We end the article with an appendix comparing Jones technology with Witzel-Zaremsky's cloning systems and with Tanushevski's construction.
As in the first article, all the results presented were possible to achieve via a surprising rigidity phenomena on isomorphisms between these groups.
\end{abstract}

%\tableofcontents

\section*{Introduction}

\subsection*{Jones technology}

%Origin; Jones: constructing CFT from subfactors
Jones subfactor theory has been intimately linked with chiral conformal field theory (in short CFT) for more than two decades. 
Longo and Rehren proved that any CFT produces a subfactor \cite{Longo_Rehren_95_Nets_sf}.
Conversely, certain subfactors produce CFT but there is no general construction \cite{Bischoff17}.
This led Vaughan Jones to ask the fundamental question: 
``Does every subfactor have something to do with a conformal field theory?''
Over several decades, Jones tried to answer this question by looking for a general construction that would associate a CFT to \textit{any} subfactor.
In his last attempt Jones constructed from a subfactor a discretisation of a CFT where Thompson group $T$ was replacing the usual conformal group invariance \cite{Jones17-Thompson}.
The idea was to take a limit and obtain a honest classical CFT.
However, discontinuity issues appeared and the CFT goal stayed out of touch \cite{Jones16-Thompson}.

%Jones tech
Jones realised that the technology he developed to construct a discrete CFT was very useful and practical for constructing group actions \cite{Brothier-19-survey}.
%Categories give groups
Given a nice category $\cC$ (a category with a left or right calculus of fractions, e.g.~a commutative cancellative monoid) there is a well-known process for constructing a so-called fraction group $G_\cC$ (in fact a fraction groupoid) from this category \cite[Section 1]{GabrielZisman67}.
%Functors give actions
Jones discovered that given \textit{any} functor $\Phi:\cC\to\cD$ ending in \textit{any} category $\cD$ one can construct very explicitly an action (called a \textit{Jones action}) $\pi_\Phi: G_\cC\act K_\Phi$ of the fraction group $G_\cC$ of the source category $\cC$ on a set $K_\Phi$ (sometime $K_\Phi$ will be rather an object in a category) constructed from the functor $\Phi.$
%Specific prop
Moreover, properties of the target category $\cD$ are reflected in the properties of the Jones action $\pi_\Phi.$
%Unit rep
For instance, a functor $\Phi:\cC\to\Hilb$ ending in the category of Hilbert spaces with isometries for morphisms provides a unitary representation $\pi_\Phi$ of the fraction group $G_\cC$.
We say that $\pi_\Phi$ is a \textit{Jones representation}.
%Groups
A functor $\Phi:\cC\to\Gr$ ending in the category of groups provides an action by group automorphisms $\pi_\Phi:G_\cC\act K_\Phi$ on a group $K_\Phi.$

%Key example:forests
%F
A key example that we will be considering in this article is given by the category $\cC=\cF$ of (rooted, finite, ordered, binary) forests whose fraction group is Richard Thompson's group $F$: the group of piecewise linear homeomorphisms of the unit interval having finitely many breakpoints at dyadic rationals and slopes power of two \cite{Cannon-Floyd-Parry96}.
%T and V
Moreover, considering larger categories of forests equipped with permutations of their leaves one can build the larger Thompson groups $T$ and $V$ as fraction groups that are the analogues of $F$ but acting by homeomorphisms on the unit torus and on the Cantor space respectively. 
Recall that we have the chain of inclusions $F\subset T\subset V$.

\subsection*{Applications of Jones technology}

%Application of Jones tech
Jones technology for producing group actions is recent (less than ten years) but has already provided a number of interesting applications and new connections between fields of mathematics.
%1) QFT: in its own right: J, Thompson field theory OS, with loop quantum gravity BS, (extended no go)
Functors $\Phi:\cF\to \cD$ from the category of forests going to a Jones planar algebra $\cD$ produce discrete CFT \cite{Jones17-Thompson,Jones16-Thompson}.
These are physical field theories sometime called Thompson field theory that are now studied in their own right \cite{Jones18-Hamiltonian,Osborne-Stiegemann19,Stiegemann19-thesis}.
This led to fruitful developments in operator algebraic quantum field theory \cite{Brot-Stottmeister-M19,Brot-Stottmeister-Phys}. 
%Rep here
Those functors produce, among others, new and interesting unitary representations of the Thompson groups using the inner product structure given by the quantum trace \cite{Jones19Irred, ABC19}.
For instance, Jones provided a one parameter family of irreducible unitary representations of $F$ that are pairwise non-isomorphic and where the parameter of the family is the celebrated index set of subfactors $\{4\cos(\pi/n):\ n\geq 3\}\cup [4,\infty)$ \cite{Jones19Irred}. 

%2) Knots/PA: J, ABC, AC, AS Recall Jones polynomials: as profound
Functors $\Phi:\cF\to \cD$ ending in the category of Conway tangles provides a way to constructs knots and links from Thompson group elements \cite{Jones17-Thompson}.
This is a fascinating and deep connection between Thompson group $F$, knot theory and quantum algebras producing among other new knots invariants \cite{Jones19-thomp-knot,Ren18-Thomp, Aiello20}.
This connection led to the discovery of the \textit{Jones subgroup} $\Vec F\subset F$ that is defined using orientation of knots and satisfies a number of remarkable properties \cite{Golan-Sapir17,Golan-Sapir21}.

%3) Unit rep: F,T,V: new class of irreps J, 
Functors $\Phi:\cF\to\Hilb$ produce unitary representations of Thompson group $F$.
%BJ1: connections with Cuntz alg, Connes-Landi spheres, deformations of reps
In particular, equipping $\Hilb$ with the direct sum for monoidal structure and considering monoidal functors Jones and the author obtained various continuous paths of unitary representations thus giving deformations \cite{Brot-Jones18-2}.
These functors provided connections between the Thompson group and the Cuntz algebra continuing previous works of Nekrashevych but also connections between the Thompson group and Connes non-commutative tori and more generally Connes-Landi spheres \cite{Cuntz77,Nekrashevych04,Connes-Landi01}.
%BJ2: analytical prop
Using monoidal functors $\Phi:\cF\to\Hilb$ where $\Hilb$ is now equipped with its classical monoidal structure Jones and the author obtained matrix coefficients vanishing at infinity for Thompson groups $F,T,V$ and reproved some results on their analytical properties such as absence of Kazhdan property (T) and the Haagerup property giving new proofs of theorems due to Reznikoff, Ghys-Sergiescu, Navas and Farley \cite{Reznikoff01,Ghys-Sergiescu87,Navas02,Farley03-H,Brot-Jones18-1}.

\subsection*{Constructing fraction groups}

%Use the tech to construct groups: Functors into groups: key observation
%1) Functor $F\to Gr$ gives an action $F\act K$ with $K$ a group, action by group aut.
Consider now a functor $\Phi:\cF\to\Gr$ from the category of forests to the category of groups.
The Jones action is then an action by group automorphism of Thompson group $F$ on a group $K_\Phi.$
%2) New group $K\rtimes F$. 
Consider the semidirect product $K_\Phi\rtimes F$ which is a group.
%3)Key observation B: $K\rtimes F$ is the fraction group of a category (of labelled forests).
The author made the key observation that $K_\Phi\rtimes F$ is in fact the fraction group of a certain category $\cC_\Phi$  \cite{Brothier19WP}.  
Morphisms of $\cC_\Phi$ can be diagrammatically described as forests whose leaves are labelled by elements of a group.
%Produce larger groups: can use Jones tech to study them (this is the motivation)
In particular, one can reapply Jones technology to the category $\cC_\Phi$ and construct unitary representations of the group $K_\Phi\rtimes F$.
%Haagerup: new classes after C, CSV
This method was successfully applied to obtain new families of permutational (i.e.~non-standard) wreath products $K_\Phi\rtimes V$ (note that the larger Thompson group $V$ is acting) having the Haagerup property \cite{Brothier19WP}.
%Hint to better understand when a wreath product or a similar group will likely have the Haagerup property or other approximation properties. General comprehension/behaviour/prop of non-standard wreath products.
These examples are the first of their kind providing new lights on the comprehension of non-standard wreath products and complementing previous deep and beautiful general studies on wreath products and their analytical properties \cite{Cornulier06,CornulierStalderValette12,Cornulier18}. 

\subsection*{Motivations}

%1st motivation.
This leads to the main motivation of this present article: constructing and describing classes of groups for which Jones technology can be applied. 
%Program: understand/construct new groups for which we can use Jones tech easily:
Hence, we want to find groups $G$ equal to fraction groups $G_\cC$ of categories $\cC$. 
Moreover, we want to have a category $\cC$ with a simple structure in order to apply Jones technology efficiently.
%Natural candidates: groups obtained from functor from F to Gr.
From the key observation made above: natural candidates of fraction groups are given by semidirect products $G=K_\Phi\rtimes F$ constructed from a functor $\Phi:\cF\to\Gr.$
%First: covariant functor, monoidal functor for the direct sum of Gr.
A first place to start is to consider the simplest possible functors: \textit{monoidal covariant} functors $\Phi:\cF\to\Gr$ (where the monoidal structure of $\Gr$ is given by direct sums).
%Why: minimal data for constructing the new fraction group ($(\Ga,\al_0,\al_1)$)
These functors are in one to one correspondence with triples $(\Ga,\al_0,\al_1)$ where $\Ga$ is a group and $\al_0,\al_1\in\End(\Ga)$ are endomorphisms.
%Moreover, extend to V without extra work
Moreover, using the fact that $\Phi$ is monoidal, we can extend the Jones action $\pi_\Phi:F\act K_\Phi$ into an action $\pi_\Phi:V\act K_\Phi$ of the larger Thompson group $V$.
Hence, we obtain a semidirect product $K_\Phi\rtimes V$. 
This larger semidirect product is again the fraction group of a category $\cC_\Phi$ of labelled forest.
%Easy to build unitary rep
The category $\cC_\Phi$ has a very simple structure because $\Phi$ is monoidal. 
It is then rather easy to construct functors $\Psi:\cC_\Phi\to \Hilb$ and thus, using Jones technology, unitary representations of the fraction group $K_\Phi\rtimes V$. 
%Few data for constructing a Jones rep of this large fraction group: $h\to h\otimes h$ and $\pi\in Rep(\Ga)$.
Such functors $\Psi$ are in one to one correspondence with a class of triples $(R,H,\sigma)$ where $H$ is a Hilbert space, $R:H\to H\otimes H$ is an isometry and $\sigma:\Ga\to \cU(H)$ is a unitary representation so that $(R,\sigma)$ satisfies a compatibility condition, see \cite[Proposition 4.1]{Brothier19WP}.
%First motivation
This article studies the class of groups obtained from triples $(\Ga,\al_0,\al_1)$ where $\Ga$ is a group and $\al_0,\al_1$ are \textit{automorphisms} of $\Ga.$ 
We will see that the fraction group obtained is isomorphic to a semidirect product $L\Ga\rtimes V$. 
The group $L\Ga$ is equal to the group of continuous maps from the Cantor space to $\Ga$ (where $\Ga$ is equipped with the discrete topology). 
The action $V\act L\Ga$ is the one obtained from the classical action of $V$ on the Cantor space and with a twist depending on $\al_0$ and $\al_1.$
%In our previous article we considered the class of groups obtained from triples $(\Ga,\al_0,\varep_\Ga)$ where $\varep_\Ga:g\mapsto e_\Ga$ is the trivial endomorphism: the endomorphism of $\Ga$ sending all elements to the neutral element $e_\Ga$ \cite{Brothier20-1}.

%Second motivation: related to QFT. 
The second motivation of this work comes from quantum field theory.
%Back to physics: CFT from loop groups. Discover a discrete loop group naturally appearing. Understand its internal sym. Rep theory?
As we briefly explained, Jones technology appeared from the desire of constructing CFT. 
%Study in its own right: Thompson field theory.
It provides a different field theory with Thompson group $T$ replacing the spatial diffeomorphism group that can be studied in its own right: Thompson field theory. 
It resembles CFT but in a discrete way where the space-time, equal to a circle in CFT, is better described in this case by the Cantor space.
%BS work
Stottmeister and the author have built field theories using Jones technology and ideas from loop quantum gravity \cite{Brot-Stottmeister-M19,Brot-Stottmeister-Phys}.
We found that the global group of symmetries together with the gauge group generate exactly fraction groups associated to triples $(\Ga,\id_\Ga,\id_\Ga)$. 
This motivates to better understand those groups as they will help understanding the physical model coming from a field theory. 

%3rd motivation:
The third motivation is to produce Thompson-like groups satisfying remarkable properties and connecting this approach with pre-existing works.
The three Thompson groups $F,T,V$ satisfy very unusual behaviours.
There exists a number of families of groups, such as ours, that resemble the Thompson groups and follow some of their remarkable properties.
%Our fraction groups constructed from functors $\Phi:\cF\to\Gr$ can be interpreted as Thompson-like groups. 
%\tg{Not only they appear as semidirect products where one of the Thompson group is acting but more important for us they have very similar categorical descriptions than the Thompson groups.}
%Previous construction similar to ours; looking at different questions
The class of groups considered in this article previously appeared in other studies.  
They were described and constructed in a different manner and used to answer other questions and problems than ours.
%Profitable to both sides: Motivation
This is a great motivation for our work since it directly applies to those additional frameworks and creates potential future interplays between these different point of views.
%See appendix
Here is a brief presentation of those studies.
The interested reader can read the appendix of this article where we analyse and compare them more deeply.

%Tanushevski
%Presentation, finiteness prop, normal subgroup for F
Tanushevski considered certain categories $\cC$ of labelled forests and some associated semidirect products that are fraction groups: $K\rtimes F, K\rtimes T$ and $K\rtimes V$ \cite{Tanushevski16,Tanushevski17}.
He focused its study on $K\rtimes F$ and proved remarkable and beautiful results. 
For instance and constructed groups with certain finiteness properties, exhibited concrete finite presentations and described explicitly the lattice of normal subgroups of $K\rtimes F.$ 
The class of categories $\cC$ considered by Tanushevski is exactly the class of categories $\cC_\Phi$ obtained from monoidal covariant functors $\Phi:\cF\to\Gr$ that we are studying here. 
Hence, the fraction group $K\rtimes F$ of Tanushevski is the same semidirect product $K_\Phi\rtimes F$ associated to our functor $\Phi.$
Tanushevski and our results are interestingly disjoint and use different techniques. However, they both rely on very similar formalisms.

%Zappa-Szep products: Brin/D BV
Using Zappa-Szep products (i.e.~bicrossed products) Brin constructed a braided version $BV$ of Thompson group \cite{Brin07-BraidedThompson,Brin06-BV2}.
Note that independently Dehornoy constructed the same group and other related groups using another approach \cite{Dehornoy06}.
The construction of Brin consists in producing a monoid of ``braided forests'' equals to a Zappa-Szep product between the monoid of finitely supported forests and the group $B_\infty$ of finitary braids over countably many strands.
%WZ cloning systems
By abstracting this process, Witzel and Zaremsky defined a general theory of \textit{cloning systems} for constructing groups that are Thompson-like groups and where $B_\infty$ is now replaced by a direct limit of groups \cite{Witzel-Zaremsky18,Zaremsky18-clone}. 
They were particularly motivated in studying finiteness properties of groups in which they have been very successful.
We will see in the appendix that their construction of groups and the one presented in this article are in essence different but share certain similarities.
In particular, the two classes of groups constructed using one or the other methods share a large common subclass.
The two methods have their own advantages/disadvantages and naturally complement each other. 
One can reinterpret results obtained using cloning system in the framework of this article and vice-versa.
For example, Witzel and Zaremsky proved that a number of fraction groups obtained from functors $\Phi:\cF\to\Gr$ are of type $F_n$ for some $n\geq 1$. 
From the functorial perspective developed in the formalism of Jones there are additional candidates for being of type $F_n$ that perhaps have appeared less natural to look at from the cloning system perspective.
%Further study on cloning systems
Cloning systems have been used to answer other questions not just involving finiteness properties.
%Here are a few examples where they appear.
%Ishida:orderability
For instance, using cloning systems Ishida constructed new families of groups that are (left- or bi-) orderable \cite{Ishida17}.
Note that his results can easily be reinterpreted in our framework and again natural candidates of orderable groups can be given using Jones machinery.
%coCF groups
Another exciting study concerns co-context-free groups (in short $co\cCF$ groups).
A conjecture of Lehnert modified using a theorem of Bleak, Matucci and Neunh\"{o}ffer states that any $co\cCF$ group is finitely generated and embeds in Thompson group $V$ \cite{Lehnert08-thesis,BleakMatucciNeunhoffer16}.
A class of possible counterexamples to this conjecture has been recently constructed using cloning systems in \cite{BZFGHM18}. 
They exactly correspond to groups obtained from triples $(\Ga,\id_\Ga,\al_1)$ in our framework where $\Ga$ is finite, $\id_\Ga$ is the identity automorphism of $\Ga$ and $\al_1$ is any automorphism of $\Ga.$ 
We provide a separate study of this class in Section \ref{sec:coCF}. 
%Hope
We hope that the constructions and techniques presented in this article would provide a useful point of view for experts in cloning systems and would lead to future fruitful developments. For example, future studies regarding the following questions already addressed: classifications, computations of automorphism groups, descriptions of normal subgroups, analytical properties, constructions of deformations, connections with operator algebras, finiteness properties, orderability and co-word problem.

\subsection*{Details and main results}

%Entering the details
We now enter into the details of this article and compare our present analysis with our previous one \cite{Brothier20-1}.
Consider the category of forests $\cF$ whose fraction group is Thompson group $F$.
We consider covariant monoidal functors $\Phi:\cF\to\Gr$ that are in one to one correspondence with triples $(\Ga,\al_0,\al_1)$ where $\Ga$ is a group and $\al_0,\al_1\in \End(\Ga).$
Such a functor produces a Jones action $F\act K$ on a group $K$ which extends to an action $V\act K$.
We consider the semidirect product obtained $K\rtimes V$. 
%Goal of the study
Our goal is to provide a clear description of $K\rtimes V$, decide how much $K\rtimes V$ depends on the triple $(\Ga,\al_0,\al_1)$ and describe the automorphism group of $K\rtimes V$.
%

%Case of forest + diagram description
%The fraction group of the category $\cF$ of (rooted, finite, ordered, binary) forests is Richard Thompson's group $F$ and one can build the two larger Thompson groups $T,V$ in a similar manner by adding the data of permutations of leaves of forests \cite{Cannon-Floyd-Parry96}.
%A functor $\Phi:\cF\to\Gr$ provides a semidirect product $K\rtimes F$ and moreover if the functor is monoidal we can enlarge it into $K\rtimes V$ where now the largest Thompson group is acting.
%Here we consider semidrect products $K\rtimes V$ built from covariant monoidal functors $\Phi:\cF\to\Gr.$
%Note that defining such a functor is equivalent to choose a group $\Ga$ and two of its endomorphisms $\al_0,\al_1\in\End(\Ga).$
%All those groups, $F,T,V,K\rtimes V$, have beautiful diagrammatic properties: elements of $F,T,V,K\rtimes V$ are represented by pairs of trees with leaves decorated by nothing, cyclic permutations, permutations and permutations along with elements of $\Ga$ respectively. 

%1st article+ id case + haagerup + spatial
In the first article, we studied semidirect products $K\rtimes V$ arising from triples $(\Ga,\al_0,\al_1)$ where one of the endomorphisms is trivial \cite{Brothier20-1}.
We proved that one could always assume that the nontrivial endomorphism was an automorphism and showed that $K\rtimes V$ was isomorphic to a restricted twisted permutational wreath product $\oplus_\Qt \Ga\rtimes V$ where $\Qt$ was the set of dyadic rationals in $[0,1)$ and $V\act \Qt$ was the restriction of the classical action of $V$ on the unit interval. 
The twist is induced by the automorphism, say $\al_0$, with formula:
$$v\cdot a(vx) = \al_0^{\log_2(v'(x))}(a(x)), \ v\in V, a\in\oplus_{\Qt}\Ga, x\in \Qt.$$
In particular, if $\al_0=\id_\Ga$, then the wreath product is untwisted and we say in that case that the fraction group $K\rtimes V$ is untwisted. 
We recover the class of groups previously considered by the author and mentioned earlier for which we proved that they have the Haagerup property when $\Ga$ has it (as a discrete group).
We provided a complete classification of these groups up to isomorphism and described their automorphism group in the untwisted case \cite{Brothier20-1}.
Note that all this analysis was possible to achieve because there are very few isomorphisms between groups in this class.
Indeed, an isomorphism $\theta: \oplus_\Qt \Ga\rtimes V\to \oplus_\Qt \ti\Ga\rtimes V$ will always decompose as follows:
$$\theta(av) = \kappa(a) \cdot c_v \cdot (\varphi v\varphi^{-1}), \ a\in \oplus_\Qt\Ga, v\in V$$
where $\kappa:\oplus_\Qt\Ga\to\oplus_\Qt\ti\Ga$ is an isomorphism, $c:V\to \oplus_{\Qt}\ti\Ga$ is a cocycle and $\varphi:\Q_2\to\Q_2$ is a map that extends to a homeomorphism of the Cantor space $\fC=\{0,1\}^{ \N }$ where $\Q_2$ corresponds to finitely supported sequences inside $\fC$.
But more important, $$\supp(\kappa(a)) = \varphi(\supp(\kappa(a))) \text{ for } a\in\oplus_\Qt\Ga$$ where $\supp$ denotes the support. 
We say that the morphism is \textit{spatial}.
The proof regarding spatiality was rather easy in the first article and the main difficult resided in describing all automorphisms of an untwisted fraction group.

%Present article: two endo + two aut + discrete loop group + Jones actions
In this present article, we consider semidirect products $K\rtimes V$ built from triples $(\Ga,\al_0,\al_1)$ where both of the endomorphisms are nontrivial. 
The situation is much more complex and cannot be reduced to automorphisms as in the previous article.
Although, using a trick due to Tanushevski one can always reduce to the case where $g\mapsto(\al_0(g),\al_1(g))$ is injective, see Proposition \ref{prop:Tanushevski}.
We quickly specialise to $\al_0,\al_1$ being automorphisms. 
We prove that $K\rtimes V$ is then isomorphic to $L\Ga\rtimes V$ where $L\Ga$ is the group of maps $a:\Qt\to\Ga$ that are \textit{locally constant} in the sense that there exists a standard dyadic partition $(I_1,\cdots,I_n)$ of $\Qt$ such that $a$ is constant on each interval $I_k, 1\leq k\leq n$.
For technical reasons, it will be convenient to embed $\Qt$ inside the Cantor space $\fC:=\{0,1\}^\N$ and extend elements of $L\Ga$ into functions from $\fC$ to $\Ga.$
Note that $L\Ga$ understood as a subgroup of the product $\prod_\fC\Ga$ is equal to the group of all continuous functions from $\fC$ to $\Ga$ where $\Ga$ is equipped with the discrete topology.
We call $L\Ga$ the \textit{discrete loop group} of $\Ga$ by analogy of loop groups in the Lie group context: the loop group of a Lie group $G$ is the group of all smooth maps from the circle to $G$ equipped with the pointwise multiplication.
The Jones action $V\act L\Ga$ is spatial and twisted by the automorphisms $\al_0,\al_1.$
The formula is 
$$(v\cdot a)(vx) := \tau_{v,x}(a(x)), \ v\in V, a\in L\Ga, x\in\Qt$$
where $(v,x)\mapsto \tau_{v,x}$ is a map valued in the subgroup of $\Aut(\Ga)$ generated by $\al_0,\al_1$, see Section \eqref{sec:loop}.
Note that this action extends to the full product $\prod_\Qt\Ga$ and thus the group $K\rtimes V$ is isomorphic to a certain subgroup of the unrestricted twisted permutational wreath product $\prod_\Qt\Ga\rtimes V$.

%Special case + physics
An important particular case is given by triples $(\Ga,\id_\Ga,\id_\Ga)$ producing $L\Ga\rtimes V$ where the Jones action $V\act L\Ga$ is untwisted: $(v\cdot a)(vx) =a(x), v\in V, a\in L\Ga, x\in\Qt.$
We say that the fraction group or the semidirect product $L\Ga\rtimes V$ is untwisted.
Elements of $L\Ga$ can be seen as a colouring of finitely many regions of the circle that are moved around by $V$.
As mentionned earlier, these groups previously appeared in physical models of Stottmeister and the author inspired by the work on Jones in field theory and by loop quantum gravity. \cite{Brot-Stottmeister-M19,Brot-Stottmeister-Phys}.
The physical space is the circle approximated by dyadic rationals. The gauge group is nothing else than the discrete loop group $L\Ga$ and the spatial group is Thompson group $T$ (the one in between $F$ and $V$) acting geometrically by rotations and local scale transformations.
The spatial symmetry group normalises $L\Ga$ and together they generate $L\Ga\rtimes T$.

%Rigidity on isom
\tbb{Similarly to the previous article, we find the surprising fact} that isomorphisms between two semidirect products $\theta:L\Ga\rtimes V\to L\ti \Ga\rtimes V$ are (up to multiplying by a centrally valued morphism) spatial in a similar sense to what was previously described:
$$\theta(av) = \kappa(a) \cdot c_v \cdot (\varphi v\varphi^{-1}) \text{ and } \supp(\theta(a))=\varphi(\supp(a)), \ a\in L\Ga, v\in V$$
but with the important difference that $\varphi\in \Homeo(\fC)$ is no longer stabilizing $\Qt$ in general, see Theorem \ref{theo:support} and Remark \ref{rem:support} for more precise statements.
This makes the analysis much harder especially in the twisted case.
The proof of this rigidity phenomena on isomorphism is more difficult than in the single nontrivial endomorphism case of the previous article. 
It is the most technical proof of the present article.

%Classif
Using the structure of isomorphisms we establish a partial classification of the class of groups considered.

\begin{lettertheo}\label{lettertheo:A}(Corollary \ref{cor:isom})
Consider two groups $\Ga,\ti\Ga$ and two pairs of automorphisms $\al_0,\al_1\in\Aut(\Ga), \ti\al_0,\ti\al_1\in\Aut(\ti\Ga).$
Denote by $G=L\Ga\rtimes V$ and $\ti G=L\ti\Ga\rtimes V$ the associated twisted fraction groups.
The following assertions are true:
\begin{enumerate}
\item If $G\simeq \ti G$, then $\Ga\simeq\ti\Ga.$ 
\item Assume that $\ti\al_0,\ti\al_1$ are inner automorphisms. We have that $G\simeq\ti G$ if and only if $\Ga\simeq\ti\Ga$ and $\al_0,\al_1$ are inner automorphisms.
\end{enumerate}
\end{lettertheo}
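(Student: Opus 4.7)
My plan rests on Theorem~\ref{theo:support}, which I will apply throughout: every isomorphism $\theta:L\Ga\rtimes V\to L\ti\Ga\rtimes V$ can be written (after multiplying by a centrally valued morphism) as $\theta(av)=\kappa(a)\, c_v\,(\varphi v\varphi^{-1})$, where $\kappa:L\Ga\to L\ti\Ga$ is a group isomorphism, $c:V\to L\ti\Ga$ is a cocycle, $\varphi\in\Homeo(\fC)$, and the support identity $\supp(\kappa(a))=\varphi(\supp(a))$ holds for every $a\in L\Ga$.

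For assertion~(1), I would introduce for each $x\in\fC$ the evaluation map $\pi_x:L\Ga\to\Ga,\ a\mapsto a(x)$. It is a surjective group homomorphism with kernel $K_x:=\{a\in L\Ga\,:\,x\notin\supp(a)\}$, identifying $\Ga\simeq L\Ga/K_x$. Applying the support identity to $\kappa$ yields $\kappa(K_x)=\ti K_{\varphi(x)}$ for the analogous kernel inside $L\ti\Ga$, so $\kappa$ descends to an isomorphism $\Ga\simeq\ti\Ga$.

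For the ``$\Leftarrow$'' direction of~(2), the plan is to untwist both sides separately. If $\al_0=\Ad(u_0)$ and $\al_1=\Ad(u_1)$ are inner, one builds a cocycle $c\in Z^1(V,L\Ga)$ from local words in $u_0,u_1$ whose conjugation turns the twisted action $V\act L\Ga$ into the untwisted spatial one, giving $L\Ga\rtimes_{tw}V\simeq L\Ga\rtimes_{untw}V$. The same works for $\ti G$ by the hypothesis on $\ti\al_i$, and any isomorphism $\psi:\Ga\simeq\ti\Ga$ then induces a $V$-equivariant isomorphism $L\psi:L\Ga\to L\ti\Ga$ for the untwisted actions, producing $G\simeq\ti G$.

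For the ``$\Rightarrow$'' direction of~(2), fix $\theta:G\simeq\ti G$; part~(1) already gives $\Ga\simeq\ti\Ga$, so the task is to prove $\al_0,\al_1\in\Inn(\Ga)$. Writing $\theta$ in the form above and comparing $\theta(vav^{-1})=\theta(v)\theta(a)\theta(v)^{-1}$ yields the intertwining
\[
\kappa(v\cdot a)=c_v\,\bigl((\varphi v\varphi^{-1})\cdot\kappa(a)\bigr)\,c_v^{-1},\quad v\in V,\ a\in L\Ga.
\]
Choosing $v\in V$ that acts locally at a chosen point $y$ as a single left-child doubling, the local twist on the left is $\al_0$, whereas the local twist on the right is $\Ad(c_v(\varphi(y)))$ composed with the germ at $\varphi(y)$ of the $\ti$-twist, which by hypothesis is a product of the inner automorphisms $\ti\al_0,\ti\al_1$, hence inner. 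Transferring through the germ-level isomorphisms $\Ga\simeq\ti\Ga$ from~(1) then forces $\al_0\in\Inn(\Ga)$, and the symmetric choice of $v$ yields $\al_1\in\Inn(\Ga)$. The main obstacle will be the bookkeeping around $\varphi v\varphi^{-1}$: since $\varphi$ need not stabilise $\Qt$, this element is only a Cantor-space homeomorphism rather than an element of $V$, so one must lift the twisted $V$-action on $L\ti\Ga$ to an action of the relevant subgroup of $\Homeo(\fC)$ (which the support identity does allow) before extracting the precise germ-level identity above.
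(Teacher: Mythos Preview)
Your approach is essentially the same as the paper's: part~(1) is exactly Theorem~\ref{theo:support}(2), the ``$\Leftarrow$'' direction of~(2) is Lemma~\ref{lem:isomaut}, and the ``$\Rightarrow$'' direction is obtained by reading off the intertwining identity of Theorem~\ref{theo:support}(3) at a well-chosen fixed point. Two remarks are worth making.

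First, your stated ``main obstacle'' is not an obstacle at all: since $\varphi\in\NCV$ by definition \emph{normalises} $V$, the element $\varphi v\varphi^{-1}$ is always in $V$, and $\ti\tau_{\varphi v\varphi^{-1},\varphi(x)}$ is already defined. No lift of the twisted action to a larger group is needed. The genuine subtlety about $\varphi$ not stabilising $\Qt$ is a classification obstruction (see Remark~\ref{rem:exotic-isom}), not an obstruction to the present argument.

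Second, your ``$\Rightarrow$'' sketch glosses over the one step that makes the argument work. The intertwining reads pointwise as
\[
\kappa_{vx}\circ\tau_{v,x}=\ad(c_v(\varphi(vx)))\circ\ti\tau_{\ad_\varphi(v),\varphi(x)}\circ\kappa_x,
\]
and to conclude that $\al_0$ is inner you need to \emph{cancel} $\kappa_{vx}$ against $\kappa_x$. The paper does this by choosing $x=0^\infty$ and $v$ with $vx=x$ and $\tau_{v,x}=\al_0$ (e.g.\ $v$ sending the sdi $00$ onto the sdi $0$); then $\kappa_{vx}=\kappa_x$ and one gets $\al_0=\kappa_x^{-1}\circ\ad(h)\circ\kappa_x$ for some $h\in\ti\Ga$, which is inner. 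Your phrase ``acts locally at a chosen point $y$ as a single left-child doubling'' does not make the fixed-point condition explicit; without it the two pointwise isomorphisms $\kappa_{vx}$ and $\kappa_x$ are a priori different and the conclusion does not follow.
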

%\tg{Explain the link with the classification of $\Aut(V)$: connection with BNR, symbolic dynamics. Explain the significance for Jones constructions but also related to Witzel-Zaremsky work.}

Note that we obtain a complete classification of the subclass of fraction groups generated by triples $(\Ga,\al_0,\al_1)$ where the automorphisms are inner. In particular, two triples $(\Ga,\id_\Ga,\id_\Ga)$ and $(\ti\Ga,\id_{\ti\Ga},\id_{\ti\Ga})$ have their associated fraction groups isomorphic if and only if $\Ga\simeq\ti\Ga.$
We could not provide a thinner classification of the general case because of the existence of exotic homeomorphisms of the Cantor space normalising $V$ that are not stabilising $\Qt$ nor sending it to the other copy of dyadic rationals inside the Cantor space, see Remark \ref{rem:exotic-isom}.

%Previous classifications for Thompson-like groups.
%Higmann, Brown: number of roots, $n$-ary
%Groups of permutations: Introduced by Farley and Hughes in 
%Classification made by Bleak-Donoven-Jonusas

%Hope on embeddings for haagerup or other properties
Given a triple $(\Ga,\al_0,\al_1)$ we write $G(\Ga,\al_0,\al_1)$ for the group previously denoted by $K\rtimes V$.
We have been able to conduct deep analysis on the class of groups $G(\Ga,\al_0,\varep_\Ga)$ studied in the first article that are built from triples $(\Ga,\al_0,\varep_\Ga)$ where $\varep_\Ga: g\in\Ga\mapsto e_\Ga$ is the trivial endomorphism.
As mentioned earlier we proved that, if $\Ga$ has the Haagerup property and $\al_0$ is injective, then $G(\Ga,\al_0,\varep_\Ga)$ has the Haagerup property \cite{Brothier19WP}.
We have been trying to prove a similar theorem for the class of groups $G(\Ga,\id_\Ga,\id_\Ga)$ or more generally $G(\Ga,\al_0,\al_1)$ for $\al_0,\al_1\in\Aut(\Ga)$ but so far have been unable to do it.
After various failed attempts we started wondering if given $G(\Ga,\id_\Ga,\id_\Ga)$ one can find another group $\ti\Ga$ and an automorphism $\ti\al_0\in\Aut(\ti\Ga)$ satisfying $G(\Ga,\id_\Ga,\id_\Ga)\simeq G(\ti\Ga,\ti\al_0,\varep_{\ti\Ga})$ or satisfying that $G(\Ga,\id_\Ga,\id_\Ga)$ embeds inside $G(\ti\Ga,\ti\al_0,\varep_{\ti\Ga})$ in a nice way.
This would have permitted to deduce properties of $G(\Ga,\id_\Ga,\id_\Ga)$ from the study of $G(\ti\Ga,\ti\al_0,\varep_{\ti\Ga})$.
The following theorem proves that these hypothetical isomorphisms and nice embeddings never exist except in trivial situations:

\begin{lettertheo}(Theorem \ref{theo:noisom})
Consider two groups $\Ga,\ti\Ga$, an endomorphism $\al\in\End(\Ga)$ and two injective endomorphisms $\ti\al_0,\ti\al_1\in \End(\ti\Ga)$.
Consider the fraction groups $G=K\rtimes V$ and $\ti G=\ti K\rtimes V$ built via the triples $(\Ga,\al,\varep_\Ga)$ and $(\ti\Ga,\ti\al_0,\ti\al_1)$ respectively.

\begin{enumerate}
\item If $K$ or $\ti K$ is nontrivial, then there are no isomorphisms between $K\rtimes V$ and $\ti K\rtimes V$.
\item If $\ti K $ is nontrivial, then there are no injective $V$-equivariant morphisms from $\ti K$ to $K$.
\item If $K$ is nontrivial, then there are no $V$-equivariant morphisms from $K$ to $\ti K$.
\end{enumerate}
\end{lettertheo}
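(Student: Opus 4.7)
The plan is to exploit the topological asymmetry between $K = \oplus_\Qt \Ga$ (whose nonzero elements have finite $\Qt$-support, of measure zero in $\fC$) and $\ti K = L\ti\Ga$ (whose nonzero elements have clopen $\fC$-support, of positive measure), attacking the three statements in the order (3), (2), (1). For (3), I interpret ``morphism'' as nonzero (and at worst injective), in line with the paper's motivation of ruling out nontrivial embeddings. Fix a $V$-equivariant $\phi : K \to \ti K$ and $a \in K \setminus \{e\}$ with finite support $F \subset \Qt$. For each clopen neighborhood $U \supset F$ in $\fC$, the subgroup $V^U \subset V$ of elements acting as the identity on $U$ stabilizes $a$ (the twists vanish near $F$), hence fixes $\phi(a)$. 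Since $V^U \cong V$ acts minimally on $\fC \setminus U$, the $V^U$-invariant clopen subset $\supp(\phi(a)) \cap (\fC \setminus U)$ is $\emptyset$ or $\fC \setminus U$; letting $U$ shrink to $F$ yields the dichotomy $\supp(\phi(a)) \in \{\emptyset, \fC\}$. If $\phi(a) \neq e$ then $\supp(\phi(a)) = \fC$, and $V$-equivariance combined with local constancy forces $\phi(a)$ to be essentially constant (or an equivariant section trivialised by $\langle \ti\al_0, \ti\al_1 \rangle$ in the twisted case). Thus $\phi$ factors through the $V$-coinvariants $K / [V, K]$, a proper quotient of $K$ since $V$ acts nontrivially on $K$, which shows $\ker \phi \neq \{e\}$ and rules out injectivity.

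For (2), let $\psi : \ti K \to K$ be injective and $V$-equivariant. Take $b = \chi_I \cdot g$ for a dyadic interval $I \subset \fC$ and $g \in \ti\Ga \setminus \{e\}$; the stabilizer argument with $V^I$ forces $\supp(\psi(b)) \subset I \cap \Qt$, a finite set. Now fix a nested sequence $I = I_0 \supsetneq I_1 \supsetneq \dots$ of dyadic subintervals shrinking to a point $x_0 \in I \setminus \Qt$ (possible by density of $\fC \setminus \Qt$), and set $b_n := \chi_{I_n} \cdot g \neq e$ and $c_n := \chi_{I \setminus I_n} \cdot g$. In $\ti K$ we have $b = c_n \cdot b_n$ with commuting disjoint-support factors; applying the stabilizer argument to each factor gives $\supp(\psi(b_n)) \subset I_n$ and $\supp(\psi(c_n)) \subset I \setminus I_n$, and disjoint-support additivity in $K$ yields $\supp(\psi(b)) = \supp(\psi(c_n)) \sqcup \supp(\psi(b_n))$. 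Since $\supp(\psi(b))$ is a fixed finite subset of $\Qt$ and $x_0 \notin \Qt$, the set $\supp(\psi(b))$ misses $I_n$ for $n$ large, so $\supp(\psi(b_n)) = \emptyset$ and $\psi(b_n) = e$ despite $b_n \neq e$, contradicting injectivity.

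For (1), an isomorphism $\theta : K \rtimes V \to \ti K \rtimes V$ should decompose spatially as $\theta(av) = \kappa(a) \cdot c_v \cdot (\varphi v \varphi^{-1})$ with $\kappa : K \to \ti K$ inducing a $V$-equivariant isomorphism up to a cocycle $c$, by an argument analogous to Theorem \ref{theo:support} adapted to the mixed setting. Applying (3) to $\kappa$ if $K$ is nontrivial, or (2) to $\kappa^{-1}$ if $\ti K$ is nontrivial, then yields the contradiction.

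The main obstacle is this spatiality step in (1): Theorem \ref{theo:support} covers isomorphisms between two $L\Ga \rtimes V$-type groups and does not directly handle the mixed case, so I would either adapt the spatiality proof of the current paper directly or find an intrinsic algebraic characterization of $K$ (respectively $\ti K$) inside its semidirect product---for instance $K$ as the normal subgroup whose nonzero elements admit a cofinite-in-$V$ stabilizer---that any abstract isomorphism automatically respects. A secondary obstacle is the twisted case of (3), where the factoring-through-coinvariants argument must be replaced by a cohomological analysis of $V$-equivariant sections $\fC \to \ti\Ga$ twisted by $\langle \ti\al_0, \ti\al_1 \rangle$, exploiting the injectivity of $\ti\al_0, \ti\al_1$ to rule out nontrivial such sections.
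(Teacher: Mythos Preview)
Your central idea --- comparing $V$-stabilizers of elements in $K$ (large, since nontrivial elements have finite support) with $V$-stabilizers of elements in $\ti K$ (constrained to preserve a proper clopen support) --- is exactly the mechanism the paper uses. The paper packages this as a single clean Claim: if $F\subset\Qt$ is finite, $I\subset\fC$ is a nonempty finite union of sdi, and $W_F:=\{v\in V: v(x)=x,\ v'(x)=1\ \forall x\in F\}$ is contained in $\Stab_V(I)$, then $I=\fC$. All three items are then short applications of this Claim. Your $V^U$ for $U$ a clopen neighbourhood of $F$ is essentially $W_F$ seen as the union $\bigcup_U V^U$, and your minimality dichotomy is a reformulation of the Claim.

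The genuine gap is in (1). You flag the spatiality step as the main obstacle and propose adapting Theorem~\ref{theo:support} to the mixed setting. This is unnecessary: Proposition~\ref{prop:chara} (proved in the companion paper \cite{Brothier20-1}) applies to \emph{any} pair of fraction groups built from covariant monoidal functors $\cF\to\Gr$, not just to two groups of the same type, and gives $\theta(K)=\ti K$ immediately. No decomposition $\theta(av)=\kappa(a)c_v\ad_\varphi(v)$ is needed, and in particular you do not have to worry about $\kappa$ being only twisted-equivariant rather than genuinely $V$-equivariant (a point you gloss over when you propose to feed $\kappa$ into (2) or (3)). Once $\theta(K)=\ti K$ is known, the paper does not reduce (1) to (2) or (3) at all: it argues directly that for any nontrivial $a\in K$ the image $\theta(a)\in\ti K$ must have full support $\fC$ (via the Claim), contradicting surjectivity of $\theta|_K$.

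For (2), your nested-interval argument is correct but more elaborate than needed. The paper simply observes that injectivity of $\psi$ forces $C_V(\psi(b))=C_V(b)$; since $W_{\supp(\psi(b))}\subset C_V(\psi(b))=C_V(b)\subset\Stab_V(I)$ with $\emptyset\neq I\neq\fC$, the Claim gives an immediate contradiction.

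For (3), your coinvariants argument, even if completed in the twisted case, only shows that a $V$-equivariant $\phi:K\to\ti K$ cannot be injective; it does not rule out nonzero $\phi$ landing in the $V$-fixed (constant, $\ti\al$-invariant) part of $\ti K$. The paper instead works directly with a representative of $\theta(a)$ at a sufficiently fine tree and, using elements of $W_{\supp(a)}$ that move small sdi around, shows each coordinate $h_\ell$ must satisfy $\beta_0(h_\ell)=\beta_1(h_\ell)=h_\ell$ (this is where injectivity of $\ti\al_0,\ti\al_1$ is used), forcing $\theta(a)$ to be locally constant with $V$ acting purely spatially on it; a final application of the Claim then yields the contradiction.
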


The proof of this theorem is not difficult. 
It relies on the fact that isomorphisms between fraction groups $\theta:K\rtimes V\to \ti K\rtimes V$ send $K$ onto $\ti K$ and then compare relative commutants $\{v\in V:\ va=va\}$ for some fixed $a$ in  $K$ and in $\ti K.$
Unfortunately, this theorem does not decide if the class of $co\cCF$ groups constructed in \cite{BZFGHM18} can be embedded in $V$ or not since we are considering isomorphism or $V$-equivariant morphisms.
However, our general analysis provides a new description of  these groups that we give in detail in Section \ref{sec:coCF}. 
Moreover, using Theorem A we provide in Corollary \ref{cor:coCF} a rather complete classification up to isomorphism of this class of $co\cCF$ groups.

%Aut(G)
The last section of the article is devoted to describe the automorphism group of $G=G(\Ga,\id_\Ga,\id_\Ga)\simeq L\Ga\rtimes V$ the group induced by a group $\Ga$ and the identity automorphism. 
%Restriction of the study
We limited our analysis to this specific case. 
Treating the general case with $\al_0,\al_1$ being any automorphisms is not only more technical but will require to better understand homeomorphisms of the Cantor space $\fC$. 
In particular, the homeomorphisms normalising $V$ but mixing in a nontrivial way the classes of $\fC/V.$
We leave this for future study.
%Untwisted
Note that since $\al_0=\al_1=\id_\Ga$ we have that the semidirect product $L\Ga\rtimes V$ is untwisted
% in the sense that
and thus the Jones action $V\act L\Ga$ is purely spatial:
$$(v\cdot a)(vx) = a(x) , \ v\in V, a\in L\Ga, x\in\Qt.$$
%Elementary automorphisms
We start by exhibiting four different kinds of automorphisms that we call \textit{elementary}.
They are the following:
The group $\NCV=\{\varphi\in\Homeo(\fC):\ \varphi V\varphi^{-1}\}$ acts spatially on $G$:
$$\varphi\cdot(av) :=  (a\circ\varphi^{-1})\cdot (\varphi v\varphi^{-1}),\ \varphi\in\NCV, a\in L\Ga,v\in V.$$
The group $\Aut(\Ga)$ acts diagonally on $G$:
$$\beta(av) = \ov\beta(a) v, \ \ov\beta(a)(x):=\beta(a(x)), \ \beta\in\Aut(\Ga), a\in L\Ga, v\in V, x\in\Qt.$$
This provides an action $$A:\NCV\times\Aut(\Ga)\to\Aut(G), (\varphi,\beta)\mapsto A_{\varphi,\beta}.$$
The normaliser subgroup $N(G):=\{f\in\prod_\Qt\Ga:\ fGf^{-1}=G\}$ acts by adjoint action: $\ad:N(G)\act G$.
The formula giving the last class of elementary automorphisms is less obvious than the three previous ones.
Using slopes, for any central element $\zeta\in Z(\Ga)$ we construct a cocycle 
$$v\in V\mapsto s(\zeta)_v \text{ where } s(\zeta)_v(x):= \zeta^{\log_2(v'(v^{-1}x)}, x\in\Qt.$$
This induces an action of $Z(\Ga)$ on $G$ :
$$F_\zeta(av) := a\cdot s(\zeta)_v\cdot v, \ \zeta\in Z(\Ga), a\in L\Ga, v\in V.$$
%The result on Aut(G)
We prove that any automorphism of $G$ is a product of these four elementary ones.
Moreover, we provide an explicit description of $\Aut(G)$ as a quotient of a semidirect product.

\begin{lettertheo}(Theorem \ref{theo:Aut(G)loop})
Let $\Ga$ be a group and $G:=L\Ga\rtimes V$ the associated untwisted fraction group. 
The formula
$$(\varphi,\beta)\cdot (\zeta, f):=(\beta(\zeta)^{k_\varphi} , \ov\beta(f)^\varphi\cdot \zeta^{\ga_\varphi})$$
defines an action by automorphisms of $\NCV\times\Aut(\Ga)$ on $Z(\Ga)\times N(G)/Z(\Ga),$
where $k_\varphi$ is a constant and $\ga_\varphi:\Qt\to\Z$ is a map (see Section \eqref{sec:decompoAut}) for all $$\varphi\in\NCV, \beta\in\Aut(\Ga), \zeta\in Z(\Ga), f\in N(G)/Z(\Ga).$$

The following map
\begin{align*}
\Xi: & ( Z(\Ga)\times N(G)/Z(\Ga)) \rtimes (\NCV\times \Aut(\Ga))\to \Aut(G)\\
& (\zeta , f , \varphi , \beta)\mapsto F_\zeta\circ\ad(f)\circ A_{\varphi,\beta} 
\end{align*}
is a surjective group morphism with kernel
$$\ker(\Xi)=\{ (e_{Z(\Ga)} , \ov g , \id_V , \ad(g^{-1}) ):\ g\in \Ga\}$$
where $\ov g\in N(G)/Z(\Ga)$ is the class of the constant map equal to $g$ everywhere and $\ad(g^{-1}):h\mapsto g^{-1}hg$ is the inner automorphism of $\Ga$ associated to $g\in \Ga$.
\end{lettertheo}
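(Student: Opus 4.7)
The plan is to proceed in four stages: verify that each of the four elementary families produces automorphisms; compute commutation relations making $\Xi$ a homomorphism; use the rigidity theorem to obtain surjectivity; and finally compute the kernel.

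First I would check that $A_{\varphi,\beta}$, $\ad(f)$ and $F_\zeta$ are automorphisms of $G$. For $A_{\varphi,\beta}$ and $\ad(f)$ this is direct from the definitions of $\NCV$ and $N(G)$ together with the spatial untwisted form of the Jones action. The only nontrivial point is that $F_\zeta$ is a group morphism, which amounts to the map $v\mapsto s(\zeta)_v$ being a $1$-cocycle of $V\act L\Ga$. This follows from the chain rule $\log_2((v_1 v_2)'(x))=\log_2(v_2'(x))+\log_2(v_1'(v_2 x))$ combined with the centrality of $\zeta$, which allows exponents of $\zeta$ to be added.

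Second I would compute the commutation relations between the four families. The identity $A_{\varphi,\beta}\circ\ad(f)\circ A_{\varphi,\beta}^{-1}=\ad(\ov\beta(f)^\varphi)$ with $f^\varphi(x):=f(\varphi^{-1}x)$ is a direct unwinding of the definitions. The key relation is $A_{\varphi,\beta}\circ F_\zeta\circ A_{\varphi,\beta}^{-1}=F_{\beta(\zeta)^{k_\varphi}}\circ\ad(\zeta^{\ga_\varphi})$, where $k_\varphi\in\Z$ records the global scaling effect of $\varphi$ on logarithmic slopes of $V$-elements and $\ga_\varphi:\Qt\to\Z$ measures the pointwise discrepancy between the slope function of $\varphi v\varphi^{-1}$ and that of $v$. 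Extracting these quantities is essentially a double application of the chain rule to $\varphi v\varphi^{-1}$. The formula $(\varphi,\beta)\cdot(\zeta,f)=(\beta(\zeta)^{k_\varphi},\ov\beta(f)^\varphi\cdot\zeta^{\ga_\varphi})$ is then forced upon us; the action axioms and the fact that $(\varphi,\beta)$ acts by group automorphisms on $Z(\Ga)\times N(G)/Z(\Ga)$ follow from these commutation relations, and consequently $\Xi$ is a well-defined group morphism.

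Third, to prove surjectivity I would take $\theta\in\Aut(G)$ and apply Theorem \ref{theo:support} to produce $\varphi\in\NCV$, an isomorphism $\kappa:L\Ga\to L\Ga$, and a cocycle $c:V\to L\Ga$ with $\theta(av)=\kappa(a)\cdot c_v\cdot(\varphi v\varphi^{-1})$ and $\supp(\kappa(a))=\varphi(\supp(a))$. The support condition forces $\kappa$ to be the diagonal application of some $\beta\in\Aut(\Ga)$ post-composed with pointwise conjugation by an element $f\in N(G)/Z(\Ga)$. Absorbing these via the elementary automorphisms $A_{\varphi,\beta}$ and $\ad(f)$ reduces to the case where $\theta$ restricts to the identity on $L\Ga$, so that $\theta(v)=c_v' v$ with $c_v'\in L\Ga$ commuting with every element of $L\Ga$; hence $c_v'\in Z(L\Ga)=L(Z(\Ga))$. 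Using that elements of $V$ are determined by their slope data, the residual cocycle $c_v'$ modulo coboundaries is captured by a single central element and coincides with $s(\zeta)$ for a unique $\zeta\in Z(\Ga)$, completing the decomposition.

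Fourth, for the kernel suppose $\Xi(\zeta,f,\varphi,\beta)=\id_G$. Evaluating on $v\in V$ gives $\varphi v\varphi^{-1}=v$ for all $v$, so $\varphi$ lies in the centraliser of $V$ in $\Homeo(\fC)$, which is trivial; hence $\varphi=\id$. The remaining identity then reads $s(\zeta)_v=(v\cdot f)\cdot f^{-1}$ for all $v\in V$. Evaluating at a dyadic point fixed by some $v\in V$ with nontrivial slope there yields $\zeta^k=e$ for every $k\in\Z$, forcing $\zeta=e$. Then $f$ is $V$-invariant and locally constant, hence a constant map $\ov g$ for some $g\in\Ga$, and the identity on $L\Ga$ yields $\beta=\ad(g^{-1})$. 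The principal obstacle in the entire proof is step three, specifically identifying the cohomology class of the residual cocycle $c_v'$ with that of $s(\zeta)$; the remaining computations, while sometimes lengthy, are essentially bookkeeping.
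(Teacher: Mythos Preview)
Your overall architecture matches the paper's proof, including the use of Proposition~\ref{prop:cocycle} for the residual central cocycle and the kernel computation. However, there is a genuine gap in your third step.

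You assert that ``the support condition forces $\kappa$ to be the diagonal application of some $\beta\in\Aut(\Ga)$ post-composed with pointwise conjugation by an element $f\in N(G)/Z(\Ga)$''. Theorem~\ref{theo:support} gives $\kappa(a)(x)=\kappa_x(a(x))$ together with the relation $\kappa_{vx}=\ad(c_v(vx))\circ\kappa_x$ (after reducing to $\varphi=\id$). This shows the $\kappa_x$ are mutually congruent modulo $\Inn(\Ga)$ along each $V$-orbit, but orbits are only dense in $\fC$, and nothing yet says $x\mapsto\kappa_x$ is continuous. For each fixed $g\in\Ga$ the map $x\mapsto\kappa_x(g)=\kappa(\ov g)(x)$ is locally constant, but the partition depends on $g$, and $\Ga$ may be infinite. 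The paper proves local constancy of $x\mapsto\kappa_x$ by a separate compactness argument: one first shows that $c_v(x)\in Z(\Ga)$ whenever $vx=x$, then uses a contracting $v$ to prove $\kappa$ is constant on a sdi to the right of each $x\in\Q_2$, then propagates this to all of $\fC$ by $V$-invariance and density before extracting a finite cover. Only with local constancy in hand can one choose a single $\beta\in\Aut(\Ga)$ that works globally.

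Second, even granting $\kappa_x=\ad(f(x))\circ\beta$ for some map $f:\fC\to\Ga$, you need $f\in N(G)$ in order to ``absorb'' $\ad(f)$ as an automorphism of $G$. The cocycle relation only gives $f(vx)f(x)^{-1}\equiv c_v(vx)$ modulo $Z(\Ga)$, which does not yield $f(f^v)^{-1}\in L\Ga$. The paper uses local constancy again: since $x\mapsto\ad(f(x))$ is locally constant, one writes $f=f_0\cdot f_1$ with $f_0\in L\Ga$ and $f_1$ valued in $Z(\Ga)$, so that $\kappa=\ad(f_0)\circ\ov\beta$ with $f_0\in L\Ga\subset N(G)$. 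This local constancy claim is the technical heart of the surjectivity proof, and your outline omits it.

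A minor slip in your kernel step: you say $f$ is ``locally constant'', but $f$ is only known to lie in $N(G)\subset\prod_{\Q_2}\Ga$. Constancy follows directly from $V$-invariance and transitivity of $V\curvearrowright\Q_2$, so the conclusion stands.
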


\subsection*{Sketch of the proof of the main technical result}

%Comments on the proofs
We end this introduction by giving a sketch of the proof of Theorem \ref{theo:support}. This theorem implies Theorem A and B and is a key result for proving Theorem C.
Consider two triples $(\Ga,\al_0,\al_1), (\ti\Ga,\ti\al_0,\ti\al_1)$ with associated groups $G=L\Ga\rtimes V$ and $\ti G=L\ti\Ga\rtimes V$.
Assume there exists an isomorphism $\theta:G\to \ti G.$
\tbb{Using a result from the first article we know that $\theta(L\Ga)=L\ti\Ga$. Moreover, using the fact that any automorphism of $V$ is induced by conjugation by a homeomorphism }of the Cantor space $\fC$ we deduce that $\theta$ can be decomposed as follows:
$$\theta(av) =\kappa(a)\cdot c_v \cdot \ad_\varphi(v),\ a\in L\Ga, v\in V$$
where $\kappa:L\Ga\to L\ti\Ga$ is an isomorphism, $V\ni v\mapsto c_v\in L\ti\Ga$ is a cocycle and $\ad_\varphi(v):=\varphi v \varphi^{-1}, v\in V$ where $\varphi$ is a homeomorphism of $\fC$ normalizing the Thompson group $V$.
From this we want to show that up to multiplying $\theta$ by a morphism $\zeta:G\to Z(\ti G)$ from $G$ to the center $Z(\ti G)$ we have that $\supp(\kappa(a)) =\varphi(\supp(a))$ for all $a\in L\Ga.$
We prove that if $a\in L\Ga$ and $x$ is not in the support of $a$, then $\kappa(a)(\varphi(x))$ must belong to the center of $\ti\Ga$. 
By describing certain centralizer subgroups we deduce that $\kappa(a)$ is constant outside of $\varphi(\supp(a))$ taking a value that is not only central but also invariant under the automorphisms $\ti\al_0$ and $\ti\al_1.$
\tbb{Now, constant maps valued in the center} of $\ti\Ga$ and invariant under $\ti\al_0,\ti\al_1$ are elements of $Z(\ti G).$
\tbb{Hence, we find that} if $a\in L\Ga$, then $\kappa(a) = \kappa^0(a)\cdot \zeta(a)$ where $\kappa^0(a)$ is supported in $\varphi(\supp(a))$ and $\zeta(a)\in Z(\ti G).$
It is then easy to conclude that $\kappa^0:L\Ga\to L\ti\Ga$ is an isomorphism satisfying $\supp(\kappa^0(a))=\varphi(\supp(a))$ for all $a\in L\Ga.$

\section*{Acknowledgement}
%{\bf Acknowledgement.}
We thank the anonymous referee for helping us improving the quality of this article.

\section{Preliminaries}\label{sec:preliminary}

We will follow the notations of the previous article  \cite{Brothier20-1} that we refer to for more details along with \cite{Brothier19WP}.

\subsection{Rational points and slopes}\label{sec:slopes}
%Cantor space
We write $\fC=\is$ for the Cantor space equal to all infinite sequences in $0,1$ equipped with the usual product topology. 
%finite words
We write $\fs\subset \fC$ for the subset of finitely supported sequences that we may identify with finite words in $0,1$.
%surjection
Recall that 
$$S:x=(x_n)_{n\in\N}\mapsto \sum_{n\in\N}\frac{x_n}{2^n}$$ 
defines a surjection from $\fC$ onto $[0,1]$ where each dyadic rationals of $(0,1)$ has exactly two preimages and all the other points have only one preimage.
%dyadic rationals
Write $\Q_2$ the dyadic rationals of $[0,1)$ that we identify with the finitely supported sequences of $\fC$, hence $\Q_2\subset\fC$ corresponds to the inclusion $\fs\subset\is.$
%second copy of dyadic rationals
Note that $\fC$ contains a copy of the dyadic rationals of $(0,1]$: the set of all sequences having finitely many $0$.
%sdi
A standard dyadic interval (in short sdi) is a subset of $\fC$ of the form 
$$I=\{m_I\cdot y:\ y\in\is\}$$ 
where $m_I$ is a finite word: $I$ is the set of all the sequences with the prefix $m_I.$
We will often refer to $m_I$ as the \textit{word associated to} $I$.
Note that $I$ is an open and closed subset of $\fC$.
It is mapped by $S$ to an interval of the form $S(I)=[\frac{a}{2^b}, \frac{a+1}{2^b}]$ with $a,b$ natural numbers justifying the terminology.
For technical reasons we will often identify $I$ with the half-open interval $\dot S(I):=[\frac{a}{2^b}, \frac{a+1}{2^b})$ and also with $\dot S(I)\cap \Q_2.$
%sdp
A standard dyadic partition (in short sdp) is a finite partition of $\fC$ made of sdi.
Similarly, we identify a sdp with the corresponding partitions of $[0,1)$ and $[0,1)\cap \Q_2$.

%V acting on C
Consider two sdp $(I_k:\ 1\leq k\leq n)$ and $(J_k:\ 1\leq k\leq n)$ having the same number of sdi $n\geq 1.$
Consider the map $v:\fC\to\fC$ defined as 
$$v(m_{I_k}\cdot x) = m_{J_k}\cdot x, \ 1\leq k\leq n, x\in\is.$$
This a homeomorphism of $\fC$ and the set of all such $v$ (for all choices of two sdp with the same number of sdi) forms a group for the composition called Thompson group $V$.
Note that if we restrict to \textit{ordered} sdp, meaning that $\sup(I_k)=\inf(I_{k+1})$ for all $1\leq k\leq n-1$, then the set of all maps as above between two ordered sdp with the same number of sdi is a group isomorphic to Thompson group $F$.
Fix $v\in V$.
We say that a sdp $(I_k:\ 1\leq k\leq n)$ (resp. a sdi $I$) is adapted to $v$ if there exists a family of finite words $(m_k:\ 1\leq k\leq n)$ such that $v(m_{I_k}\cdot x) = m_k\cdot x$ for all $1\leq k\leq n$ and $x\in\is$ (resp. a finite word $m$ such that $v(m_I\cdot x) = m\cdot x$ for all $x\in\is$).

%V acting on $[0,1)$ and $\Q_2$
The group $V$ stabilises the subset $\Q_2\subset\fC$.
Consider the complement inside $\fC$ of all sequences having finitely many $0$'s.
This set is in bijection with $[0,1)$ via the map $S$ of above.
We have that this subset is stabilised by the action $V$. 
This provides a piecewise linear action of $V$ on $[0,1)$. Each element $v\in V$ has finitely many discontinuous points when acting on $[0,1)$ all appearing at dyadic rationals. This is the classical Thompson group action of $V$ on the unit interval $[0,1)$.

%Derivation of element of V
If $v\in V$ and $x\in \fC$, then there exists some finite words $m,w$ and an infinite sequence $y$ satisfying that $x=m\cdot y$ and that $v(m\cdot z)=w\cdot z$ for all infinite sequence $z.$
We say that the \textit{slope} or the \textit{derivation} of $v$ at $x\in\fC, S(x)\neq 1$, denoted $v'(x)$, is the ratio $\frac{ 2^{ |m| } }{ 2^{|w|}}$ where $|m|$ is the number of letters in the word $m$.
Note that this corresponds to the usual (right-)slope of the map $v:[0,1)\to [0,1)$ at the point $S(x)$. 
We extend the definition of $v'(x)$ at $x=1$ by defining $v'(1):=\lim_{x\to 1} v'(x)$.

%Classes
We consider the $V$-orbits of $V\act\fC$.
Given any nonempty finite word $c\in\fs$ we consider the set of all $x\in\fC$ satisfying that $x$ is eventually periodic of period $c$: 
$$x=y\cdot c\cdot c\cdots=y\cdot c^\infty$$
for some $y\in\fs.$
We call this set the \textit{tail equivalence class} of $c$ and note that this is a $V$-orbit.
The subset $\Q_2\subset\fC$ is the tail equivalence class of the word $c=0$ of length one.
%Rational points
A point $x\in\fC$ is called \textit{rational} if it belongs to one tail equivalence class, i.e.~it is eventually periodic.
Observe that $x\in\fC$ is rational if and only if its projection in $[0,1]$ by $S$ is rational in the usual sense.
We denote the set of rational points of $\fC$ by $R$.

%Normaliser of V
A theorem of Rubin implies the following proposition on the automorphism group of $V$ \cite{Rubin96}, see \cite[Section 3]{BCMNO19} for details.

\begin{proposition}\label{prop:NCV}
Consider $V$ as a subgroup of $\Homeo(\fC)$ the group of homeomorphisms of the Cantor space $\fC$. 
Let $\NCV$ be the normaliser subgroup of $V$ inside $\Homeo(\fC).$
We have that $$\NCV\to \Aut(V),\ \varphi\mapsto \ad_\varphi:v\mapsto \varphi v\varphi^{-1}$$
is an isomorphism.
\end{proposition}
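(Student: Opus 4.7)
The plan is to verify in turn that the map is a well-defined group homomorphism, injective, and surjective.

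Well-definedness and the homomorphism property are formal. For $\varphi\in\NCV$ the condition $\varphi V\varphi^{-1}=V$ means $\ad_\varphi$ restricts to a bijection of $V$ preserving the group structure, hence to an automorphism of $V$; and $\ad_{\varphi\psi}=\ad_\varphi\circ\ad_\psi$ holds by construction.

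For injectivity, I would show that the centralizer of $V$ inside $\Homeo(\fC)$ is trivial. Suppose $\varphi\in\Homeo(\fC)$ commutes with every $v\in V$. The point is that $V$ contains elements of arbitrarily small support: for any sdi $I$ there exist non-trivial $v\in V$ with $\supp(v)\subseteq I$, since the rigid stabilizer of $I$ inside $V$ is itself isomorphic to $V$ acting on $I\simeq\fC$. Commutation forces $\varphi(\supp(v))=\supp(v)$ for every $v\in V$, so $\varphi$ preserves every sdi set-wise; since sdi form a basis of the topology of $\fC$, shrinking to any point $x\in\fC$ yields $\varphi(x)=x$ and hence $\varphi=\id_\fC$.

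For surjectivity, this is exactly the content of Rubin's reconstruction theorem, as formulated for Thompson-like groups in \cite{BCMNO19}. The theorem asserts that when a group $G$ acts faithfully and locally densely on a perfect Hausdorff space $X$, every abstract automorphism of $G$ is induced by a unique self-homeomorphism of $X$. The action $V\act\fC$ satisfies these hypotheses: $\fC$ is a compact Hausdorff space without isolated points, and the rigid stabilizer of every sdi acts with a dense orbit on that sdi, giving the required local density. An automorphism $\alpha\in\Aut(V)$ therefore comes from a unique $\varphi\in\Homeo(\fC)$ with $\alpha=\ad_\varphi$, and the relation $\varphi V\varphi^{-1}=V$ then places $\varphi$ in $\NCV$. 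The main obstacle is really just the invocation of Rubin's theorem, but this is used as a black-box citation; once granted, the verification of its hypotheses for $V\act\fC$ is routine.
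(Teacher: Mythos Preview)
Your proposal is correct and aligns with the paper's treatment: the paper does not give its own proof of this proposition but simply states that it follows from Rubin's theorem \cite{Rubin96}, referring to \cite[Section 3]{BCMNO19} for details. Your argument is a faithful expansion of that citation---verifying the hypotheses of Rubin's reconstruction theorem for the action $V\act\fC$ and supplying the (easy) injectivity argument via triviality of the centralizer---so there is nothing to add.
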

We will freely identify $\Aut(V)$ with $\NCV.$
Note that the group $\NCV$ maps $V$-orbits to $V$-orbits (for the action $V\act\fC$) and in particular maps $\Qt$ onto a $V$-orbit. 
However, the image of $\Qt$ may be different from $\Qt$ which will make our study more technical. 
%Image of a sdi
We will be constantly using the following fact (which is easy to prove):

If $\varphi\in\NCV$ and $I$ is a sdi, then $\varphi(I)$ is a finite union of sdi.
See \cite[Lemma 1.3]{Brothier20-1} for a proof.
%Fixed point
We end this section by a proposition on slopes that we prove using fixed points.

\begin{proposition}\label{prop:slope} 
Let $R\subset\fC$ be the subset of rational points.
The following assertions are true:
\begin{enumerate}
\item A point $x\in\fC$ is in $R$ if and only if there exists $v\in V$ satisfying $v(x)=x$ and $v'(x)\neq 1.$
\item If $\varphi\in\NCV$, then $\varphi(R)=R.$
\item For any $x\in R$ and $\varphi\in\NCV$ there exists some prime words $m_x,m_{\varphi(x)}$ satisfying that $x$ and $\varphi(x)$ are in the tail equivalence class of $m_x$ and $m_{\varphi(x)}$ respectively.
Moreover, if $v\in V$ satisfies $v(x)=x, v'(x)\neq 1$, then 
$$\frac{\log_2((\varphi v \varphi^{-1})'(\varphi(x)))}{|m_{\varphi(x)}|} = \frac{\log_2(v'(x))}{|m_x|}$$
and $\log_2(v'(x))$ is a multiple of $|m_x|$.
\end{enumerate}
\end{proposition}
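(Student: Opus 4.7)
\emph{Proof plan.}

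For (1), my plan is to analyze both directions via the affine action of $v\in V$ on an adapted sdi. For $(\Leftarrow)$, suppose $v(x)=x$ with $v'(x)\neq 1$, and choose an adapted sdi $I$ for $v$ containing $x$ with associated word $m$, so $v(m\cdot z)=m'\cdot z$ for some finite word $m'$ and all $z\in\is$. The slope $v'(x)=2^{|m|-|m'|}$ is nontrivial, so WLOG $|m|>|m'|$. Writing $m=m'\cdot m''$ and $x=m\cdot y$, the fixed-point equation $v(x)=x$ becomes $m'\cdot y=m\cdot y$, which forces $y=m''\cdot y$, hence $y=m''^{\infty}$. Thus $x=m'\cdot m''^{\infty}$ is eventually periodic and lies in $R$. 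This same computation proves the divisibility claim of (3): since $m''^{\infty}$ is the tail of $x$, the prime period $|m_x|$ divides $|m''|=\log_2 v'(x)$. For $(\Rightarrow)$, given $x=y\cdot c^{\infty}\in R$ with $c$ a nonempty finite word (absorbing one period into $y$ if necessary so that $y$ itself is nonempty), I define $v\in V$ to strip one copy of $c$: it sends the sdi with prefix $y\cdot c$ to that with prefix $y$ via $y\cdot c\cdot z\mapsto y\cdot z$ and extends to a bijection of $\fC$ by matching any compatible sdp refinement of the complements; the resulting $v$ satisfies $v(x)=x$ and $v'(x)=2^{|c|}\neq 1$.

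For (2), my plan exploits (1) via a ``local identity'' contradiction. Given $x\in R$ and $\varphi\in\NCV$, pick $v\in V$ from (1) with $v(x)=x$ and $v'(x)\neq 1$, and set $w:=\varphi v\varphi^{-1}\in V$; then $w(\varphi(x))=\varphi(x)$. Assume for contradiction that $w'(\varphi(x))=1$. At a fixed point with trivial slope, the affine rule on any sdi $\ti I$ adapted to $w$ and containing $\varphi(x)$ is forced to be the identity $m_{\ti I}\cdot z\mapsto m_{\ti I}\cdot z$, so $w|_{\ti I}=\id$. Conjugating back, $v$ is the identity on $\varphi^{-1}(\ti I)$, which contains a sdi around $x$; this forces $v'(x)=1$, contradicting the choice of $v$. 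Hence $w'(\varphi(x))\neq 1$ and (1) applied at $\varphi(x)$ yields $\varphi(x)\in R$.

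For the nontrivial equality in (3), my plan is to use the germ structure of $\Stab_V(x)$. Declare $v_1\sim v_2$ when $v_1v_2^{-1}$ is the identity on some sdi around $x$; by (1)'s analysis the slope map induces a group isomorphism $\Stab_V(x)/\sim\,\simeq\,|m_x|\Z$, with generator $|m_x|$ realised by the fundamental expansion of $(\Rightarrow)$ (taking $c=m_x$). Conjugation by $\varphi$ preserves stabilizers and the germ relation, so it descends to a group isomorphism $|m_x|\Z\to|m_{\varphi(x)}|\Z$ carrying the generator $|m_x|$ to $\pm|m_{\varphi(x)}|$; translating slopes, this gives $\log_2 w'(\varphi(x))/|m_{\varphi(x)}|=\pm\log_2 v'(x)/|m_x|$. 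The main obstacle is ruling out the minus sign. Measure invariance is unavailable here since $V$ preserves no continuous probability measure on $\fC$, so I plan instead to use the topological characterization $v'(x)>1\iff\bigcap_{n\geq 0}v^{-n}(I)=\{x\}$, valid for the adapted sdi $I$ of $v$ at $x$. Transported through $\varphi$ this becomes $\bigcap_{n\geq 0}w^{-n}(\varphi(I))=\{\varphi(x)\}$, which is incompatible with $w'(\varphi(x))\leq 1$: in both the trivial-slope and the contracting cases the intersection $\bigcap_n w^{-n}(\ti I)$ over an adapted sdi $\ti I\subset\varphi(I)$ equals $\ti I$ itself, a nontrivial sdi. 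Hence signs agree and the required equality of rotation numbers follows.
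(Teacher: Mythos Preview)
Your proposal is correct and follows essentially the same strategy as the paper's proof: part~(1) via the affine action on an adapted sdi and the nesting of $I$ and $v(I)$ forced by the common fixed point; part~(2) via the ``trivial slope at a fixed point means locally the identity'' observation transported through $\varphi$; and part~(3) via the slope homomorphism on the stabiliser together with a dynamical characterisation of the sign of the slope (you use $\bigcap_{n}v^{-n}(I)=\{x\}$, the paper uses $\lim_n v^n(y)=x$ --- equivalent statements).

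The one genuine difference worth noting is your choice of quotient in~(3): you pass to the \emph{germ} quotient $\Stab_V(x)/\!\sim$, whereas the paper passes to the abelianisation $V_x/V_x'$ and invokes a result from the previous article for injectivity of the slope map there. Your route is slightly more self-contained, since the kernel of the slope map on $\Stab_V(x)$ is \emph{exactly} the subgroup of elements trivial near $x$ (by the same ``slope~$1$ at a fixed point $\Rightarrow$ locally identity'' argument you already used in~(2)), so injectivity on the germ quotient is immediate. Two minor points to tighten when you write it out: in $(1)(\Leftarrow)$ you should say explicitly why $m'$ is a prefix of $m$ (namely, $x\in I\cap v(I)$ forces the two sdi to be nested); and in the sign argument of~(3), what you actually need is $\bigcap_n w^{-n}(\varphi(I))\supset\tilde I$ (not $\bigcap_n w^{-n}(\tilde I)=\tilde I$), which follows from $w^n(\tilde I)\subset\tilde I\subset\varphi(I)$ in the contracting/trivial cases.
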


\begin{proof}

Proof of (1).

%R contained in S
Denote by $S$ the set of $x\in\fC$ for which there exists $v\in V$ satisfying $v(x)=x$ and $v'(x)\neq 1.$
Consider $x\in R$. There exists a nonempty word $c\in\fs$ and a finite word $y$ such that $x=y\cdot c^\infty$ where $c^\infty$ denotes the infinite concatenation of $c$ with itself: $c^\infty= c\cdot c\cdot c\cdots.$
Consider $I:=\{ y\cdot z:\ z\in\is\}$ which is a sdi and consider the subinterval $J:=\{ y\cdot c\cdot z:\ z\in\is\}.$
Up to replacing $y$ by $y\cdot c$ we can always assume that $I$ is a proper subset of $\fC$.
Complete $I$ into a sdp $A_I$ and $J$ into a sdp $A_J$. 
Up to modifying them we can assume that $A_I$ and $A_J$ have the same number of sdi.
Let $v\in V$ such that $v$ is adapted to $A_I$ sending an sdi of $A_I$ onto a sdi of $A_J$ and such that $v(I)=J.$
We necessarily have that $v(y\cdot z) = y\cdot c\cdot z$ for all $z\in\is$ and in particular $v(y\cdot c^\infty)=y\cdot c^\infty$ that is $v(x)=x.$
Moreover, $v$ has slope $2^{-n}$ at $x$ where $n$ is the number of letters in $c.$
This proves that $R$ is contained in $S$.

%S contained in R
Conversely, consider $x\in S$ and chose $v\in V$ satisfying $v(x)=x, v'(x)\neq 1$.
Up to considering $v^{-1}$ rather than $v$ we can assume that $v'(x)<1.$
This implies that there exists a sdi $I$ containing $x$ that is adapted to $x$ and satisfying that $v(I)\subset I.$
If $m_I$ is the word associated to $I$, then we have that $m_I$ is a proper prefix of $m_{v(I)}$ that is $m_{v(I)}=m_I\cdot c$ for a certain nontrivial word $c.$
Since $x\in I$ there exists $z\in \is$ satisfying $x=m_I\cdot z$ and by definition of the action of $v$ on $I$ we have that 
$$m_I\cdot z = v(m_I\cdot z) = m_{v(I)}\cdot z = m_I\cdot c\cdot z.$$
This implies that $x= m_I\cdot c^\infty$ and thus $x\in R$.

Proof of (2).

%\varphi(R)=R
Consider $\varphi\in\NCV$ and $x\in R$.
There exists $v\in V$ satisfying $v(x)=x$ and $v'(x)\neq 1.$
Note that $w:=\varphi v\varphi^{-1}$ is in $V$ by definition and that $w(\varphi(x))= \varphi(x)$.
If $w'(\varphi(x))=1$, then there exists a sdi $J$ containing $\varphi(x)$ on which $w$ acts like the identity. 
This would imply that $v$ fixes points arbitrary closed to $x$ but different from $x$ implying that $v$ acts like the identity on a neighbourhood of $x$ and thus on a sdi contradicting that $v'(x)\neq 1$.
Therefore, $w'(\varphi(x))\neq 1$ and thus $\varphi(x)\in R$.
We have proved that $\varphi(R)\subset R$ and considering $\varphi^{-1}$ we deduce that $\varphi(R)=R$.

Proof of (3).

%Slope
Consider $\varphi\in\NCV$, $x\in\fC$ and $v\in V$ satisfying $v(x)=x.$
We follow a similar proof to the one given in \cite[Proposition 1.5]{Brothier20-1} and obtain that 
$$w\mapsto \log_2(w'(x))$$
provides an injective group morphism from $V_x/V_x'$ to $\Z$ where $V_x=\{ w\in V:\ w(x)=x\}$ and $V_x'$ is its derived group. 
Therefore, there exists a unique natural number $k_x$ (possibly equal to zero) satisfying that $w\mapsto \log_2(w'(x))$ is an isomorphism $\ell_x$ from $V_x/V_x'$ onto $k_x\Z.$
Note that by the proof of above $k_x=0$ if and only if $x$ is not a rational point, i.e.~$x\notin R.$
Similarly, we have an isomorphism $\ell_{\varphi(x)}$ from $V_{\varphi(x)}/V_{\varphi(x)}'$ onto $k_{\varphi(x)}\Z$.
Note that $\ad_{\varphi}:V\to V, v\mapsto \varphi v\varphi^{-1}$ sends $V_x$ onto $V_{\varphi(x)}$ and $V_x'$ onto $V_{\varphi(x)}'$ and thus factorising into an isomorphism $\ov\ad_{\varphi}: V_x/V_x'\to V_{\varphi(x)}/ V_{\varphi(x)}'.$
We obtain that $$f:=\ell_{\varphi(x)}\cdot \ov \ad_\varphi\cdot \ell_x^{-1}: k_{x}\Z\to k_{\varphi(x)} \Z$$
is a group isomorphism.
Note that if $v(x)=x$, then $v'(x)<1$ if and only if there exists a sdi $I$ containing $x$ satisfying that $\lim_{n\to\infty}v^n(y)=x$ for all $y\in I$.
This characterisation shows that if $v(x)=x$ and $v'(x)<1$, then $\ad_\varphi(v)'(\varphi(x))<1.$
This implies that $f(k_{x} n) = k_{\varphi(x)} n$ for all $n\in \Z$.
Therefore, if $v\in V_x$ and $\bar v\in V_x/V_x'$ is its class, then 
$$f\circ \ell_x(\bar v) = f(\log_2(v'(x))) = \frac{k_{\varphi(x)}}{k_x}\cdot \log_2(v'(x)).$$
Since 
$$f\circ \ell_x(\bar v) = \ell_{\varphi(x)}(\bar{\ad_\varphi(v)}) = \log_2(\ad_\varphi(v)'(\varphi(x)))$$
we deduce that
$$k_{\varphi(x)} \cdot \log_2(v'(x)) = k_x \cdot \log_2(\ad_\varphi(v)'(\varphi(x)), \ \forall v\in V_x, x\in\fC.$$

%Conclusion
We conclude the proof by computing $k_x$ and $k_{\varphi(x)}$ in terms of certain prime words $m_x$ and $m_{\varphi(x)}$ depending on $x$ and $\varphi(x)$.
%Fixed points in $Vx.$
Assume that $v\in V$ satisfying $vx=x$ and $v'(x)<1.$
There exists a sdi $I$ containing $x$ for which $v$ is adapted that is $v(m_I\cdot z) = m_{v(I)}\cdot z$ for all $z\in\is.$
Moreover, since $v'(x)<1$ we have that $m_{v(I)}= m_I\cdot a$ for a nontrivial word $a$.
Since $x\in R$ there exists a prime word $m_x$ such that $x$ is in the tail equivalence class of $m_x$.
Therefore, there exists $z\in \fs$ satisfying $x=m_I\cdot z\cdot m_x^\infty$ since $x\in I$.
We obtain that
$$v(x)=m_I\cdot a\cdot z\cdot m_x^\infty = x = m_I\cdot z\cdot m_x^\infty.$$
Hence, $a\cdot z\cdot m_x^\infty = z\cdot m_x^\infty$. 
We deduce that $a\cdot z = z\cdot m_x^p$ for a certain natural number $p\geq 1.$
This implies that $a=c^p$ where $c$ is a cyclic permutation of the prime word $m_x.$
In particular, $|a| = p\cdot |m_x|$ where $|a|$ is the number of letters of the word $a$.
This implies that $\log_2(v'(x)) = |a| = p \cdot |m_x|\in |m_x|\Z.$
Conversely, consider the sdi 
$$J=\{ y\cdot m_x\cdot u:\ u\in\is\}$$ containing $x=y\cdot m_x^\infty$.
Note that $y\cdot m_x$ is nontrivial since $m_x$ is nontrivial and thus $J$ is a proper subset of the Cantor space.
Therefore, we can construct $w\in V$ satisfying that $w(y\cdot m_x\cdot u) = y\cdot m_x\cdot m_x\cdot u$ for all $u\in\is.$
Note that $w(x)=x$ and $\log_2(w'(x)) = |m_x|.$
We obtain that the range of the morphism $w\in V_x\to \log_2(w'(x))\in\Z$ is equal to $|m_x|\Z$.
Therefore, $k_x=|m_x|$.
Similarly, since $\varphi(R)=R$ we have that $\varphi(x)\in R$ and thus there exists a prime word $m_{\varphi(x)}$ satisfying that $\varphi(x)$ belongs to the tail equivalence class of $m_{\varphi(x)}$ and that the constant $k_{\varphi(x)}$ is equal to $|m_{\varphi(x)}|$.
We obtain that 
 $$|m_{\varphi(x)}|\cdot \log_2(v'(x)) = |m_x|\cdot \log_2(\ad_\varphi(v)'(\varphi(x))), \ \forall v\in V_x, x\in R.$$
This finishes the proof of the proposition.
\end{proof}

\begin{remark}
%Two prime words give the same tail
Note that two prime words may define the same tail equivalence class, e.g.~$01$ and $10$.
In fact two prime words $c,d$ define the same tail equivalence class if and only if $d$ is a cyclic permutation of $c$, i.e.~if $d=d_1\cdots d_n$ with $d_i\in\{0,1\}, 1\leq i\leq n$, then there exists $0\leq k\leq n-1$ such that $c = d_{k+1} \cdots d_n\cdot d_1\cdots d_k.$

%Fixed points characterisation
From the proof of the previous proposition it is easy to deduce a more precise characterisation of pairs $(v,x)\in V\times\fC$ satisfying $v(x)=x$ and $v'(x)\neq 1.$
Assume $v\in V$ and $x\in\fC$ such that $v(x)=x$ and $v'(x)<1$. 
Then, there exists a prime word $c$, a word $a$ and a natural number $n\geq 1$ satisfying:
\begin{itemize}
\item $x=a\cdot c^\infty$;
\item if $I_a$ is the sdi $\{a\cdot z:\ z\in\is\}$, then $I_a$ is adapted to $x$ and $v(a\cdot z) = a\cdot c^n\cdot z$ for all $z\in \is.$
\end{itemize}
Note that $c$ corresponds to a cyclic permutation of the word $m_x$ of the proposition.

%Groupoid of germs, see Brin or Bleak-Lanoue
Instead of considering derivations one can work with germs of functions which is an approach adopted by Brin \cite[Section 3]{Brin96-chameleon}, see also \cite{Bleak-Lanoue10}.
We can recover some of the proof of the last proposition from this analysis by considering the groupoid of germs $\cG(V)$ of $V$.
The groupoid $\cG(V)$ has for objects $\fC$ and morphisms from $x$ to $y$ the equivalence classes of pairs $(U,f)$ where $U\subset \fC$ is a neighbourhood of $x$ and $f:U\to \fC$ is the restriction of an element of $V$ sending $x$ to $y$ under the equivalence relation $\sim$ defined as $(U,f)\sim (U',f')$ if $U,U'$ are neighbourhoods of $x$ and $f,f'$ both restrict to the same function on $U\cap U'$. The automorphism group of the object $x\in\fC$ inside the groupoid $\cG(V)$ is morally the quotient group $V_x/V_x'$.
\end{remark}

\subsection{Discrete loop groups and description of fraction groups}\label{sec:loop}
Let $\cF$ be the category of binary forests with set of trees $\fT$. Write $I$ for the trivial tree and $Y$ for the tree with two leaves. 
Let $\Gr$ be the monoidal category of groups with monoidal structure given by direct sums.
%Data for the construction of a fraction group
A functor $\Phi:\cF\to \Gr$ provides a limit group $K$, a Jones action $V\act K$ and thus a semidirect product $K\rtimes V$.
We are interested in classifying the class of such semidirect products $K\rtimes V$.
Recall that $K\rtimes V$ admits a nice description as a fraction group and thus we may refer to $K\rtimes V$ as a fraction group.
We limit our study to covariant monoidal functors $\Phi:\cF\to\Gr$ that are in one-to-one correspondence with triples $(\Ga,\al_0,\al_1)$ where $\Ga$ is a group and $\al_i:\Ga\to\Ga,i=0,1$ endomorphisms.
The correspondence is given by $\Phi\mapsto (\Phi(1), \Phi(Y))$ where $1$ is the object $1$ of $\cF$ and $Y$ the tree with two leaves.
In a previous article, we studied the class of fraction groups arising from triples $(\Ga,\al_0,\al_1)$ where $\al_1=\varep_\Ga$ is the trivial endomorphism, i.e.~$\al_1(g)=e_\Ga$ for all $g\in \Ga$ where $e_\Ga$ is the neutral element of $\Ga.$
In this article we are interested in the case where both $\al_0,\al_1$ are nontrivial and shortly we will restrict to the case where $\al_0,\al_1$ are automorphisms.

\subsubsection{General description of the limit group}
%General case: limit group
%Triple
Fix a triple $(\Ga,\al_0,\al_1)$ with $\Ga$ a group and $\al_0,\al_1$ some endomorphisms of $\Ga.$
%Functor
Let $\Phi:\cF\to\Gr$ be the associated monoidal covariant functor satisfying $\Phi(1)=\Ga$ and $\Phi(Y)=(\al_0,\al_1)\in \Hom(\Ga,\Ga\oplus\Ga).$
From this data we construct the directed system of groups 
$$(\Ga_t , \ \iota_{s,t}:\ s,t\in\fT, s\geq t)$$
defined as follows.
We define $$\Ga_t=\{(g,t):\ g=(g_\ell)_ {\ell\in\Leaf(t)} \in \Ga^{\Leaf(t)} \}$$ 
a copy of the group of maps from the leaves $\Leaf(t)$ of the tree $t$ to the group $\Ga$ for $t\in\fT$.
We may drop the $t$ and write $g$, $g_t$ or $(g_\ell)_{\ell\in\Leaf(t)}$ for $(g,t)$ if the context is clear.
We define the maps such as:
$$\iota_{ft,t}:\Ga_t\to \Ga_{ft}, \ (g,t)\mapsto (\Phi(f)(g),f\circ t),\ t\in\fT, f\in\Hom(\cF), g\in \Ga^{\Leaf(t)}$$
where $f$ is a forest composable with $t$, i.e.~the number of roots of $f$ is equal to the number of leaves of $t$.
We obtain a limit group $K:=\varinjlim_{t\in\fT}\Ga_t$ and a Jones action $\pi:V\act K.$

%Description of the elements of $K$
We now describe the elements of $K$ using equivalence classes of functions.
To do that it is convenient to define $\al_m$ for a finite word $m=m_1\cdots m_k\in\fs$ which is 
$$\al_m:= \al_{m_k}\circ\cdots \al_{m_1}.$$
Note the inversion of order between the letters of the word $m$ and the order of composition of the endomorphisms.
%Functions
Elements of $\Ga_t$ for $t\in\fT$ can be identified with certain maps from $\fC$ to $\Ga.$
Indeed, consider a tree $t\in\fT$ with associated sdp $(I_t^\ell:\ell\in\Leaf(t))$.
Define $$\kappa_t:\Ga_t\to \{ \fC\to \Ga \} , \ \kappa_t(g)(x) = g_\ell \text{ if } x\in I_t^\ell.$$
%Here we write $g = (g_\ell)_{\ell\in\Leaf(t)}$ the elements of $\Ga_t$ where $g_\ell\in\Ga$ for all $\ell\in\Leaf(t).$
Observe that $\kappa_t$ is a group isomorphism form $\Ga_t$ onto the maps $f:\fC\to \Ga$ that are constant on each $I_t^\ell, \ell\in\Leaf(t).$
Consider another tree $s$ larger than $t$.
There exists a forest $f$ satisfying $s=f\circ t.$
In the composition $f\circ t$ we attach to each leaf $\ell$ of $t$ a tree $f_\ell$ so that $f=(f_\ell)_{\ell\in\Leaf(t)}$ is the horizontal concatenation of the trees $f_\ell, \ell\in \Leaf(t).$
Given $\ell \in \Leaf(t)$ and $p$, a leaf of the tree $f_\ell$, we write $m_p=e_1\cdots e_k$ the finite sequence of $0,1$ (possibly empty) corresponding to the (geodesic) path from the root of $f_\ell$ to the leaf $p$ and where $e_i$ is $0$ (resp. $1$) if the $i$th edge starting from the root is a left edge (resp. a right edge).
If $I_p$ is the sdi corresponding to the leaf $p\in\Leaf(f_\ell)$, then observe that
$$\kappa_{f\circ t}(\Phi(f)(g_t))(x)=\al_{m_p}(g_\ell) = \al_{e_k}\circ\cdots\circ\al_{e_1}(g_\ell), \ \forall x\in I_p,\ g_t=(g_\ell)_{\ell\in\Leaf(t)}\in\Ga_t.$$
Hence, if $g=g_t\in\Ga_t$, then $\kappa_{f\circ t}(\Phi(f)(g_t))$ has its support contained in the support of $\kappa_t(g_t)$ and takes values of the form $\al_m(g_\ell)$ with $\ell\in\Leaf(t)$ and $m$ some finite words in $0,1.$
%Limit group
\tbb{Given $g\in K$ we can find a large enough tree $t$ such that $g$ is the equivalence class of some $(g_t,t)\in\Ga_t.$}
If $(g_{f\circ t},f\circ t)\in\Ga_{f\circ t}$ is another representative of the class of $g$, then by definition of the direct system we have $\Phi(f)(g_t)=g_{f\circ t}$.
The element $g$ is then described by a family of continuous maps $(\kappa_s(g_s), s\geq t)$ from $\fC$ to $\Ga$ all taking finitely many values.

\subsubsection{Support}
%Support
The description of elements of $K$ as equivalence classes of maps suggest a notion of support.
Consider $g\in K$.
Choose a tree $t$ large enough such that $g$ admits a representative $(g_t,t)\in \Ga_t$.
Denote by $\supp(\kappa_t(g_t))$ the support of the map $\kappa_t(g_t):\fC\to\Ga.$
Choose a tree $s$ such that $s\geq t.$
There exists a forest $f$ composable with $t$ such that $s=f\circ t$ and $(g_s,s):=(\Phi(f)(g_t),f\circ t)$ is another representative of $g$.
By definition of $\Phi(f)$ and the maps $\kappa_t,\kappa_s$ we obtain that $\supp(\kappa_t(g_t))\supset \supp(\kappa_s(g_s))$.
We define the support of $g$ as the intersection:
$$\supp(g):=\bigcap_{s\in\fT:\ s\geq t} \supp(\kappa_{s}(g_s)).$$
Note that this intersection does not depend on the choice of the representative $(g_t,t)$ of $g$.
In particular, if $\al_0,\al_1$ are injective (this is the main case of our study), then $\supp(g)=\supp(\kappa_t(g_t))$ for any choice of representative $(g_t,t)$ of $g$.

\subsubsection{General description of the Jones action}

%Jones action
We now briefly describe the Jones action 
$$\pi:V\to \Aut(K), v\mapsto \pi_v.$$
We will later provide a more practical description of this action in the case where both $\al_0$ and $\al_1$ are automorphisms.
Although, it is useful to present the general case since we will be using it in the proof of the next proposition and in Section \ref{sec:isom}.
Consider $g\in K$ and $v\in V$.
By definition of the group $V$ there exists two ordered sdp $\ov I:= (I_k:\ 1\leq k\leq n), \ov J:= (J_k:\ 1\leq k\leq n)$ and a permutation $\sigma$ satisfying that:
$$v(m_{I_k}\cdot x)=m_{J_{\sigma(k) } } \cdot x$$ for all $1\leq k\leq n$ and $x\in \is.$
Hence, $v$ sends the $k$th sdi of $\ov I$ to the the $\sigma(k)$th sdi of $\ov J$ for all $1\leq k\leq n.$
Let $t,s$ be the trees associated to the sdp $\ov I, \ov J$ respectively.
Since $K$ is the direct limit of the directed system of groups $(\Ga_r, r\in\fT)$ one can find a tree $r$ and a representative $(g_{r},r)\in \Ga_{r}$ of $g$. 
Here, $g_{r}:\Leaf(r)\to \Ga$ is a map from the set of leaves of $r$ to the group $\Ga.$
If $r\neq t$, then we can always find some forests $p,q$ satisfying $pt=qr.$
We have that $(\Phi(q)(g_r), qr)$ is a representative of $g$ and $v$ is described by the pair of trees $(pt, ps).$
Therefore, we can always assume that $r=t$ up to choosing large enough trees.
The element $v$ sends a sdi of $\ov I$ to a sdi of $\ov J$ which defines a bijection $b:\Leaf(t)\to \Leaf(s).$
Consider the map $g_r\circ b^{-1}: \Leaf(s)\to \Ga$ which is an element of $\Ga_s.$
The Jones action is described by the following formula:
$$\pi_v(g):= [ (g_r\circ b^{-1},s)]$$
where $[(g_r\circ b^{-1},s)]$ is the class of $(g_r\circ b^{-1},s)$ inside the directed limit group $K$.
The Jones action is somehow induced by the spatial action $V\act \fC$.
Note that the pair of endomorphisms $(\al_0,\al_1)$ only appeared when we considered the representative $(\Phi(q)(g_r), qr)$ of $g$. 
They will become more apparent when we will describe the Jones action in the special case where  $\al_0,\al_1$ are automorphisms.

%One key example
We now describe a key class of examples of Jones action and fraction groups.
Consider a group $\Ga$ and the pair of morphisms $(\al_0,\al_1)=(\id_\Ga,\id_\Ga)$.
This defines a covariant monoidal functor $\Phi:\cF\to\Gr$ satisfying that 
$$\Phi(1)=\Ga \text{ and } \Phi(Y):\Ga\to\Ga\oplus\Ga, g\mapsto (g,g).$$
Let $K\rtimes V$ be the associated fraction group.
If $g\in K$, then there exists a large enough tree $t\in\fT$ and an element $g_t=(g_\ell)_{\ell\in\Leaf(t)}$ in $\Ga_t$ that is a representative of $g$.
The tree $t$ provides a sdp $(I_t^\ell:\ell\in\Leaf(t))$ of $\fC$. 
We associate to $g_t$ the map 
$$\kappa_t(g_t):\fC\to\Ga, x\mapsto g_\ell \text{ if } x\in I_t^\ell.$$
Observe that $\kappa_t(g_t)$ does not depend on the choice of $t$ since $\al_0=\al_1=\id_\Ga$, i.e.~if $g_s\in\Ga_s$ with $s\in\fT$ is another representative of $g$, then $\kappa_s(g_s)=\kappa_t(g_t).$
We find that $K$ is isomorphic to the group of all maps $f:\fC\to\Ga$ satisfying that there exists a sdp $(I_1,\cdots,I_n)$ so that $f$ is constant on each $I_k$, $1\leq k\leq n.$
Under this identification, the Jones action $\pi:V\act K$ is then the classical spatial action induced by $V\act \fC$:
$$\pi_v(f)(x):=f(v^{-1}x), v\in V, f\in K, x\in\fC.$$
We will qualify these Jones actions and fractions groups as \textit{untwisted}.

%region versus discrete
Note that if we were working in the case of the previous article \cite{Brothier20-1}: that is $\Ga$ is a group, $\al_0$ is any endomorphism and $\al_1=\varep_\Ga$ is trivial, then by refining the tree $t$ into $f\circ t$ we obtain that the support of $a_{f\circ t}$ is getting smaller obtaining at the limit a discrete support contained in $\{r_t^\ell:\ell\in\Leaf(t)\}$ where $r_t^\ell$ is the first point of the sdi  $I_t^\ell, \ell\in\Leaf(t)$.
The case $\al_0=\al_1=\id_\Ga$ provides that $K$ corresponds to continuous functions with finite range supported on regions (clopen) rather than on a finite union of points of the space $\fC$.
It is better adapted to build field theories. Moreover, the group $K$ is reminiscent of the loop group of a Lie group (the group of smooth maps from the circle to a fixed Lie group). Recall that loop groups were used to construct conformal field theories by Wassermann \cite{Wassermann98}.

Before ending this general presentation we explain why we can always restrict our analysis to triples $(\Ga,\al_0,\al_1)$ satisfying that $g\mapsto (\al_0(g),\al_1(g))$ is injective. 
The argument is due to Tanushevski \cite[Corollary 3.11]{Tanushevski16}.
We provide a short proof for the convenience of the reader.

\begin{proposition}\label{prop:Tanushevski}
Consider a triple $(\Ga,\al_0,\al_1)$ with $\Ga$ a group and $\al_0,\al_1\in\End(\Ga).$
There exists another triple $(\ov\Ga,\ov\al_0,\ov\al_1)$ such that $\ov\Ga$ is a group, $\ov\al_0,\ov\al_1\in\End(\ov\Ga)$, the morphism $\ov g\in\ov\Ga\mapsto (\ov\al_0(\ov g),\ov\al_1(\ov g))$ is injective and satisfying that the fraction groups associated to $(\Ga,\al_0,\al_1)$ and 
$(\ov\Ga,\ov\al_0,\ov\al_1)$ are isomorphic.
\end{proposition}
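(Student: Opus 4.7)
The plan is to quotient $\Ga$ by the subgroup of elements ``eventually killed" by all long enough compositions of $\al_0$ and $\al_1$, then show the associated functor produces an isomorphic limit group together with the same Jones action of $V$.

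First, I would define
$$N:=\{g\in\Ga\ :\ \exists k\geq 0,\ \al_m(g)=e_\Ga\text{ for all words }m\in\{0,1\}^k\}.$$
Using that each $\al_m$ is a group homomorphism, $N$ is a normal subgroup of $\Ga$. The key combinatorial identity is $\al_m\circ\al_i=\al_{i\cdot m}$ for $i\in\{0,1\}$ (where $i\cdot m$ means prepending the letter $i$), which follows directly from the convention $\al_m=\al_{m_k}\circ\cdots\circ\al_{m_1}$ of Section \ref{sec:loop}. This identity shows $\al_i(N)\subseteq N$: if $g\in N$ at depth $k$, then $\al_m(\al_i(g))=\al_{i\cdot m}(g)=e$ for all $|m|=k-1$, so $\al_i(g)\in N$ at depth $k-1$. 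Set $\ov\Ga:=\Ga/N$ and let $\ov\al_0,\ov\al_1\in\End(\ov\Ga)$ be the induced endomorphisms.

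Next, I would verify the injectivity condition. If $\ov\al_0(\ov g)=\ov\al_1(\ov g)=e_{\ov\Ga}$, then $\al_0(g),\al_1(g)\in N$; choosing $k$ large enough that $\al_m(\al_i(g))=e$ for $i=0,1$ and all $|m|=k$, the identity above yields $\al_{m'}(g)=e$ for all $|m'|=k+1$, so $g\in N$.

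The main step is to exhibit a $V$-equivariant group isomorphism $K\simeq \ov K$ (which then yields $K\rtimes V\simeq\ov K\rtimes V$). The quotient $\Ga\to\ov\Ga$ induces coordinate-wise surjections $\Ga_t\to\ov\Ga_t$ that intertwine the transition maps $\iota_{ft,t}$ (because $\al_i$ descends to $\ov\al_i$) and are equivariant for the leaf-permuting Jones action. Passing to the direct limit gives a $V$-equivariant surjection $\pi:K\to\ov K$.

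The crux is injectivity of $\pi$, which is where the definition of $N$ is designed to work. Suppose $g\in K$ is represented by $(g_t,t)\in\Ga_t$ with $\pi(g)=0$. Then there is a forest $f$ composable with $t$ such that $\ov\Phi(f)(\ov g_t)=e_{\ov\Ga_{ft}}$, i.e.~every coordinate $\al_{m_p}(g_\ell)$ (for $\ell\in\Leaf(t)$ and $p$ a leaf of the tree-component $f_\ell$) lies in $N$. By definition of $N$, for each such pair $(\ell,p)$ there exists $k_{\ell,p}$ with $\al_{m''}(\al_{m_p}(g_\ell))=e$ for every $|m''|=k_{\ell,p}$. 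Attaching a full binary tree of depth $\max_{p}k_{\ell,p}$ at each leaf of $ft$ refines $f$ into a forest $f'$ satisfying $\Phi(f')(g_t)=e$ in $\Ga_{f't}$, so $g=0$ in $K$. Hence $\pi$ is an isomorphism, and the final isomorphism $K\rtimes V\simeq\ov K\rtimes V$ follows from $V$-equivariance.

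The hard part is really just bookkeeping: checking the ordering convention $\al_m=\al_{m_k}\circ\cdots\circ\al_{m_1}$ is used consistently (notably when verifying $\al_i(N)\subseteq N$) and that the refinement step in the injectivity argument correctly combines different leaf depths into a single forest. No subtle topological or groupoid-theoretic input is needed; the argument lives entirely inside the directed system $(\Ga_t,\iota_{s,t})$ and its quotient.
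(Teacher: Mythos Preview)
Your proof is correct and follows essentially the same approach as the paper: your subgroup $N$ coincides with the paper's $N=\bigcup_{t\in\fT}\ker(\Phi(t))$ (since every tree refines to a complete binary tree of some depth, so $g\in\ker(\Phi(t))$ for some $t$ iff $\al_m(g)=e$ for all $m$ of some fixed length), and both arguments pass to $\ov\Ga=\Ga/N$ and check that the induced map on direct limits is a $V$-equivariant isomorphism. You spell out the injectivity of $\ov g\mapsto(\ov\al_0(\ov g),\ov\al_1(\ov g))$ and of $K\to\ov K$ in more detail than the paper, which simply asserts the latter is ``not hard to prove.''
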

\begin{proof}
Fix $(\Ga,\al_0,\al_1)$ as above and consider its associated monoidal functor $\Phi:\cF\to\Gr$ and Jones action $\pi:V\act K$.
For any tree $t\in\fT$ consider the kernel of $\Phi(t)$ which is a normal subgroup of $\Ga.$
Let $N$ be the union of the kernels: $N:=\cup_{t\in\fT} \ker(\Phi(t)).$
Note that if $t\leq s$, then $\ker(\Phi(t))\subset \ker(\Phi(s))$ for $t,s\in\fT$. This easily implies that $N$ is a normal subgroup of $\Ga$ and moreover is closed under taking $\al_0$ and $\al_1.$
This implies that $\al_0,\al_1$ factorise into endomorphisms $\ov\al_0,\ov\al_1$ of $\ov\Ga:=\Ga/N$.
Let $\ov \pi:V\act \ov K$ be the Jones action associated to the triple $(\ov\Ga,\ov\al_0,\ov\al_1)$ where $\ov K$ is the direct limit of the directed system of groups $( \ov\Ga_t:=(\Ga/N)_t:\ t\in\fT)$.
The quotient map $q:\Ga\to\ov\Ga$ induces a family of quotient maps $\Ga_t\to\ov\Ga_t$ providing a group morphism $K\to\ov K$.
It is not hard to prove that this later morphism is a $V$-equivariant isomorphism for the two Jones actions.
\end{proof}

\begin{remark}This last proposition tells us that, if we consider fraction groups obtained from \textit{monoidal} functors $\Phi:\cF\to\Gr$, then we can always consider that $\Phi(Y):\Ga\to\Ga\oplus\Ga$ is injective (which is equivalent to saying in the monoidal context that $\Phi(f)$ is injective for all forests $f$).
Here, the fact that $\Phi$ is monoidal is crucial. It is not excluded that interesting examples of fraction groups will only appear from non-monoidal functors $\Phi:\cF\to\Gr$ satisfying that $\Phi(f)$ is non-injective for certain forests $f$.
\end{remark}

\subsubsection{Restriction to pairs of automorphisms}

%Restriction
In order to obtain an easy description of the fraction group $K\rtimes V$ in terms of maps and spatial action of $V$ we restrict the choice of triples $(\Ga,\al_0,\al_1)$ to the one where both $\al_0$ and $\al_1$ are automorphisms. 
{\bf Hence, from now on $\al_0,\al_1$ are automorphisms.}
We are going to describe $G$ in a similar way as done above as a certain subgroup of a twisted \textit{unrestricted} permutational wreath product. 
We start by introducing some terminology and notations:

%Discrete loop group
\begin{definition}[Discrete loop group]
Consider a group $\Ga$ and a map $f:\fC\to \Ga.$
We say that $f$ is locally constant if there exists a sdp $(I_1,\cdots,I_n)$ of $\fC$ such that $f$ restricted to $I_j$ is constant for any $1\leq j\leq n.$
The collection of all locally constant maps from $\fC$ to $\Ga$ forms a group $L\Ga$ with pointwise multiplication that we call the \textit{discrete loop group} of $\Ga.$
\end{definition}

%Continuous functions
\begin{remark}
Consider a group $\Ga$ equipped with the discrete topology.
We have that the discrete loop group of $\Ga$ is equal to the group $C(\fC,\Ga)$ of continuous functions from $\fC$ to $\Ga$.
\end{remark}
\begin{proof}Indeed, consider $a\in L\Ga.$ If $g\in \Ga$, then its inverse image by $a$ is either empty or equal to a finite union of sdi which is open. Therefore, $a:\fC\to\Ga$ is continuous implying that $L\Ga\subset C(\fC,\Ga).$
Conversely, consider $f\in C(\fC,\Ga)$. Let $x\in \fC$ and $g:=f(x)$. 
Since $f$ is continuous we have that $U_x:=f^{-1}(\{g\})$ is an open neighbourhood of $x$ and thus contains a sdi $I_x$ so that $x\in I_x.$
We obtain an open covering $(I_x:\ x\in \fC)$ and by compactness there exists a finite subcovering $(I_x:\ x\in X)$ where $X\subset \fC$ is finite. 
For each $x\in X$ define $J_x:= I_x\setminus (\cup_{y\in X, y\neq x} I_y)$ and note that $(J_x:\ x\in X)$ is a sdp of $\fC$ satisfying that $f$ is constant on each $J_x, x\in X.$ 
Therefore, $f\in L\Ga$ and $C(\fC,\Ga)\subset L\Ga.$
\end{proof}

%Cantor space or dyadic rationals for L\Ga
By definition $L\Ga$ is a subgroup of the product $\prod_\fC \Ga.$
Since $\Qt\subset \fC$ is a dense subset we have that all sdi have a nontrivial intersection with $\Qt.$
This implies that the restriction map $r:\prod_\fC\Ga\to \prod_\Qt\Ga$ is injective when restricted to $L\Ga.$
It will be sometimes more convenient to consider $r(L\Ga)\subset \prod_\Qt\Ga$ rather than $L\Ga\subset \prod_\fC\Ga.$
We will often identify $L\Ga$ with $r(L\Ga).$

%Notation of automorphisms
\begin{notation}\label{nota:tau}
Recall that if $m=l_1\cdots l_n\in\fs$ is a finite word in $0,1$ of length $n$, then we write $\al_m:=\al_{l_n}\circ\cdots\circ\al_{l_1}$ (note the inversion of the order) the composed automorphism.

Consider a sdi $I$ and its associated word $m_I$, i.e.~$I=\{m_I\cdot z:\ z\in\{0,1\}^\N\}.$
We write $\al_I:= \al_{m_I}.$
For example, if $I$ corresponds to $[0,1/2)$ and $J=[1/4,1/2)$, then $\al_I=\al_0$ and $\al_J=\al_1\circ\al_0.$

If $t$ is a tree, then any of its leaves $\ell$ defines a sdi $I^\ell_t.$
We write $\al_\ell$ for the automorphism $\al_{I^\ell_t}$ if the context is clear.

Consider now $v\in V$. There exists two (not necessarily ordered) sdp $(I_k:\ 1\leq k\leq n)$ and $(J_k:\ 1\leq k\leq n)$ such that $v$ is adapted to the first satisfying $v(I_k)=J_k$ for all $1\leq k\leq n.$
If $x\in I_k$ for some $1\leq k\leq n$, we write 
$$\tau_{v,x}:= \al_{v(I_k)}^{-1}\al_{I_k}=\al_{J_k}^{-1}\al_{I_k}.$$
Observe that the definition of $\tau_{v,x}$ does not depend on the choice of the sdp and defines a map
$$\tau_{v,\cdot}:\fC\to \Aut(\Ga), x\mapsto \tau_{v,x}$$
that is locally constant.
\end{notation}

We are now ready to provide an explicit description of the Jones action.

\begin{proposition}\label{prop:twistedLoop}
Let $(\Ga,\al_0,\al_1)$ be a triple such that $\Ga$ is a group and $\al_0,\al_1\in\Aut(\Ga).$
Consider the associated directed system of groups $(\Ga_t: t\in\fT)$, Jones action $V\act K$ and fraction group $K\rtimes V$.
For any tree $t\in\fT$ we have a morphism of groups
$$\kappa_t:\Ga_t\to L\Ga , \ g=(g_\ell)_{\ell\in\Leaf(t)}\mapsto \kappa_t(g):x\in\fC\mapsto\al_\ell^{-1}(g_\ell) \text{ if } x\in I_t^\ell.$$
This defines a directed system of morphisms whose limit $\kappa:K\to L\Ga$ is an isomorphism.

The Jones action $V\act K$ is conjugated by $\kappa$ to the following action on $L\Ga$:
$$\pi:V\to \Aut(L\Ga), \pi_v(a)(vx) = \tau_{v,x}(a(x)),\ v\in V, a\in L\Ga, x\in \fC.$$
We identify the Jones action $V\act K$ with $\pi:V\act L\Ga$ and fraction group $K\rtimes V$ with the semidirect product $L\Ga\rtimes V$ obtained from $\pi$.
\end{proposition}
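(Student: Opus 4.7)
The plan is to verify four items in sequence: that each $\kappa_t$ is a well-defined homomorphism into $L\Ga$, that the family $(\kappa_t)_{t\in\fT}$ is compatible with the connecting morphisms $\iota_{s,t}$ of the directed system (so that a limit $\kappa$ exists), that $\kappa$ is bijective, and finally that $\pi$ becomes the claimed spatial-twisted action under $\kappa$. The first item is immediate from the definitions: $\Ga_t=\Ga^{\Leaf(t)}$ has pointwise multiplication, the automorphisms $\al_\ell^{-1}$ are bijective homomorphisms, and the output is constant on each sdi of the sdp $(I_t^\ell)_{\ell\in\Leaf(t)}$, hence in $L\Ga$.

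The algebraic engine behind the rest is a single composition identity: if $s=f\circ t$ and $p$ is a leaf of the subtree $f_\ell$ with path-word $m_p$ from its root to $p$, then the sdi $I_s^p$ corresponds to the concatenated word $m_{I_t^\ell}\cdot m_p$, and the convention $\al_{m} = \al_{l_n}\circ\cdots\circ\al_{l_1}$ yields
\[
\al_{I_s^p} \;=\; \al_{m_p}\circ \al_{I_t^\ell}, \qquad\text{equivalently}\qquad \al_{I_s^p}^{-1}\circ \al_{m_p} \;=\; \al_{I_t^\ell}^{-1}.
\]
Since $\iota_{s,t}(g_t)$ at the leaf $p$ equals $\al_{m_p}(g_\ell)$ by construction of $\Phi(f)$, applying $\al_{I_s^p}^{-1}$ to this value gives $\al_{I_t^\ell}^{-1}(g_\ell)$, which is the value of $\kappa_t(g_t)$ on $I_s^p\subset I_t^\ell$. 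This proves $\kappa_s\circ \iota_{s,t}=\kappa_t$, so the $\kappa_t$ descend to a well-defined morphism $\kappa\colon K\to L\Ga$ on the direct limit. Injectivity is clear since each $\kappa_t$ is injective (the $\al_\ell$ are automorphisms), and surjectivity holds because any $a\in L\Ga$ is constant on the sdi of some sdp, and any sdp of $\fC$ is obtained from a tree $t\in\fT$, so $a$ lies in the image of $\kappa_t$ after applying the bijection $\al_\ell$ to its values.

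Finally, to identify $\pi$ with the claimed formula, fix $v\in V$ and $g\in K$, and choose a representative $(g_t,t)$ of $g$ with $t$ large enough that $v$ is adapted to the sdp $(I_t^\ell)_{\ell\in\Leaf(t)}$; this is possible by the standard refinement argument recalled earlier. The element $v$ induces a bijection $b\colon \Leaf(t)\to \Leaf(s)$ sending $\ell$ to the leaf of $s$ whose sdi is $v(I_t^\ell)$, and by the general description of the Jones action, $\pi_v(g)$ is represented by $(g_t\circ b^{-1},s)$. For $x\in I_t^\ell$ one has $vx\in v(I_t^\ell)=I_s^{b(\ell)}$, so
\[
\kappa_s(g_t\circ b^{-1})(vx) \;=\; \al_{v(I_t^\ell)}^{-1}(g_\ell) \;=\; \bigl(\al_{v(I_t^\ell)}^{-1}\al_{I_t^\ell}\bigr)\al_{I_t^\ell}^{-1}(g_\ell) \;=\; \tau_{v,x}\bigl(\kappa_t(g_t)(x)\bigr),
\]
which is exactly the desired formula after identifying $K$ with $L\Ga$ via $\kappa$.

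The main mild obstacle is bookkeeping around the reversed-order convention $\al_m=\al_{l_n}\circ\cdots\circ\al_{l_1}$: one must be careful that concatenation of words corresponds to composition in the \emph{opposite} order, which is exactly what makes $\al_{I_s^p}^{-1}\circ\al_{m_p}=\al_{I_t^\ell}^{-1}$ hold and what makes $\tau_{v,x}=\al_{v(I_t^\ell)}^{-1}\al_{I_t^\ell}$ act nontrivially on $\al_{I_t^\ell}^{-1}(g_\ell)$ by cancelling the innermost $\al_{I_t^\ell}^{-1}$. Once this convention is pinned down, every step reduces to routine verification.
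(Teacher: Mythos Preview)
Your proof is correct and follows essentially the same approach as the paper's: both verify that $(\kappa_t)_{t\in\fT}$ is compatible via the identity $\al_{I_s^p}=\al_{m_p}\circ\al_{I_t^\ell}$ (which the paper writes as $\al_u=\al_{b_k}\circ\cdots\circ\al_{b_1}\circ\al_\ell$), establish bijectivity of $\kappa$ by injectivity of each $\kappa_t$ and surjectivity from the sdp description of $L\Ga$, and compute the transported action by the same cancellation $\al_{v(I_t^\ell)}^{-1}(g_\ell)=\tau_{v,x}\bigl(\al_{I_t^\ell}^{-1}(g_\ell)\bigr)$. Your write-up is simply a more compressed version of the paper's argument, with the reversed-order convention made explicit up front.
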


\begin{proof}
Consider $(\Ga,\al_0,\al_1)$ as in the proposition.
%\kappa_t morphism
It is clear that the formula $\kappa_t$ for $t\in\fT$ defines a group morphism from $\Ga_t$ to $\prod_{\fC}\Ga$. 
If $g\in\Ga_t$, then $\kappa_t(g)$ is constant on each sdi appearing in the sdp associated to $t$.
Therefore, $\kappa_t(g)$ is locally constant and thus belongs to $L\Ga.$
%Injectivity
If $\kappa_t(g)=e_{L\Ga}$ (where $e_{L\Ga}$ is the neutral element of $L\Ga$), then $\al_\ell^{-1}(g_\ell)=e_\Ga$ for all $\ell\in\Leaf(t)$ implying that $g_\ell=e_\Ga$ for all leaves $\ell\in \Leaf(t)$ and thus $g=e_{\Ga_t}.$
Hence, $(\kappa_t: t\in\fT)$ is a family of injective group morphisms valued in $L\Ga.$

%Compatibility
We now show that the family $(\kappa_t:\ t\in\fT)$ defines a morphism from $K:=\varinjlim_{t\in\fT}\Ga_t$ to $L\Ga.$
By definition of the directed system of the groups $\Ga_t, t\in\fT$ it is sufficient to check that for all $t\in\fT$, all forests $f\in\Hom(\cF)$ composable with $t$ and all $g=(g_\ell)_{\ell\in\Leaf(t)}\in\Ga_t$ we have that 
$$\kappa_t(g) = \kappa_{f\circ t}(\Phi(f)(g))$$
where $\Phi:\cF\to\Gr$ is the functor induced by the triple $(\Ga,\al_0,\al_1).$
If $t$ has $n$ leaves, then the forest $f$ has $n$ roots and thus is the horizontal concatenation of $n$ trees. 
Denote these trees by $f_1,\cdots,f_n$ when ordered from left to right.
If $\ell$ is the $k$th leaf of $t$, then it is lined up with $f_k$ in the composition $f\circ t.$ 
Write in that case $f_\ell:=f_k$ so that $f_\ell$ is attached to the leaf $\ell$ of $t$ inside the composition $f\circ t.$
Choose one leaf $u$ of the tree $f_\ell$ and write $p_u=b_1\cdots b_k\in\fs$ the path from the root of $f_\ell$ to the leaf $u$.
Observe that $\Phi(f)(g)=h$ with $h:\Leaf(f)\to \Ga$ satisfying that $h_u=\al_{b_k}\circ \cdots \circ \al_{b_1}(g_\ell)$.
If $I^u_{f\circ t}$ is the sdi corresponding the leaf $u$ of $f\circ t$, then we obtain that 
$$\kappa_{f\circ t}(\Phi(f)(g))(x) = \al_u^{-1} (h_u) = \al_u^{-1}( \al_{b_k}\circ \cdots \circ \al_{b_1}(g_\ell)), \ \forall x\in I^u_{f\circ t}.$$
By definition, $\al_u = \al_{b_k}\circ\cdots\circ\al_{b_1}\circ \al_\ell$ implying that $\kappa_{f\circ t}(\Phi(f)(g))(x) = \al_\ell^{-1}(g_\ell)$ for all $x\in I^u_{f\circ t}$.
Note that $I^u_{f\circ t}$ is a subinterval of $I^\ell_t$ implying that $\kappa_t(g)$ and $\kappa_{f\circ t}(\Phi(f)(g))$ coincide on $I^u_{f\circ t}.$
We have proved that 
$$\kappa_{f\circ t}\circ \Phi(f) = \kappa_t \text{ for all } t\in\fT, f\in\Hom(\cF).$$
Therefore, we can define the direct limit of the family of morphism $(\kappa_t:\ t\in\fT)$ giving a group morphism $$\kappa:K\to L\Ga.$$
%Injective
Moreover, $\kappa$ is injective since each of $\kappa_t,t\in\fT,$ and $\Phi(f), f\in\Hom(\cF)$ are.

%Surjective 
Consider $a\in L\Ga.$
There exits a sdp $(I_1,\cdots,I_n)$ such that $a$ is constant on each $I_k, 1\leq k\leq n.$
Let $t$ be the tree associated to this sdp.
Note that the range of $\kappa_t$ is equal to all $b\in L\Ga$ that are constant on each $I_k,1\leq k\leq n$.
In particular, we obtain that $a$ belongs to the range of $\kappa_t$ and thus $\kappa$ is surjective.
We have proved that $\kappa$ is an isomorphism from $K$ to $L\Ga.$

%Jones action
Consider the Jones action $\pi:V\act K$ and $g\in K, v\in V$.
There exists two sdp $(I_1,\cdots,I_n)$ and $(J_1,\cdots,J_n)$ such that $v$ is adapted to the first and sends $I_k$ onto $J_k$ for all $1\leq k\leq n.$
There exists a tree $t\in\fT$ such that $g$ is the equivalence class of an element of $\Ga_t$.
Up to refining $t$ and the sdp of above we can assume that the sdp of $t$ is equal to $(I_1,\cdots,I_n).$
Write $\ell_k$ the leaf of $t$ associated to the sdi $I_k$ for  $1\leq k\leq n$.
Similarly consider the tree $s\in\fT$ associated to the sdp $(J_1,\cdots,J_n)$ and denote by $r_k$ the leaf of $s$ associated to $J_k$ for $1\leq k\leq n.$
Observe that $$\kappa(g)(x) = \al_{I_k}^{-1}(g_{\ell_k}), \ \forall x\in I_k, 1\leq k\leq n,$$
where $(g_{\ell_k}:\ 1\leq k\leq n)$ is the representative of $g$ in $\Ga_t.$
By definition of the Jones action we have that the automorphism $\pi_v(g)$ has for representative $h=(h_{r_k}:\ 1\leq k\leq n)\in \Ga_s$ where $h_{r_k} = g_{\ell_k}$ for all $1\leq k\leq n.$
Therefore, $$\kappa[\pi_v(g)](y) = \al_{J_k}^{-1}(h_{r_k}) = \al_{J_k}^{-1}(g_{\ell_k}), \ \forall y \in J_k, 1\leq k\leq n.$$
In particular, if we fix $1\leq k\leq n$ and $x\in I_k$ we have that $vx\in J_k=v(I_k)$ and thus:
$$\kappa[\pi_v(g)](vx) = \al_{J_k}^{-1}(g_{\ell_k}) = \al_{J_k}^{-1} \circ \al_{I_k} (\kappa(g)(x)) = \al_{v(I_k)}^{-1}\circ \al_{I_k}(\kappa(g)(x)).$$
This implies that $\kappa[\pi_v(g)](vx) = \tau_{v,x}(\kappa(g)(x))$ for all $g\in K, v\in V, x\in\fC.$
This finishes the proof of the proposition.
\end{proof}

%Wreath product
Note that the Jones action $\pi:V\act L\Ga$ extends in the obvious way into an action on $\prod_{\fC}\Ga$ giving a twisted permutational wreath product $\prod_\fC\Ga\rtimes V$.
%We may identify $L\Ga\rtimes V$ as a subgroup of $\prod_\fC\Ga\rtimes V$.

\begin{remark}
Consider a nontrivial group $\Ga$ and the map $$h:\oplus_\Qt\Ga\to \prod_\Qt\Ga, \ h(a)(x) = \prod_{y\leq x} a(y)$$ 
where $\prod_{y\leq x} a(y):= a(x_1)\cdots a(x_n)$ when $0\leq x_1< \cdots < x_n\leq x$ and $\supp(a)\cap [0,y]\subset\{x_1,\cdots,x_n\}.$
This is a well-defined map since all the products considered are finite.
Moreover, one can show that $h$ is a group isomorphism from $\oplus_\Qt\Ga$ onto $L\Ga$. 
However, this morphism is not compatible with the action $V\act\Qt.$
We will see in Section \ref{sec:isom} that $\oplus_{\Q_2}\Ga\rtimes V$ and $L\Ga\rtimes V$ are never isomorphic so the latter isomorphism is not $V$-equivariant whatever are the Jones actions.
\end{remark}

\subsection{Previous results on fraction groups}

We will refer and use the following results proved in \cite{Brothier20-1}.

%Decomposability property
\begin{proposition}\cite[Proposition 2.4]{Brothier20-1}\label{prop:chara}
Let $\Phi:\cF\to\Gr$ be a covariant monoidal functor with associated Jones action $V\act K$ and semidirect product $G:=K\rtimes V$.

The subgroup $K$ is the unique maximal normal subgroup satisfying the following \textit{decomposability property:}
\begin{itemize}
\item $K$ can be decomposed as a direct sum of two groups $K=A\oplus B$;
\item $K = \cN_G(A) = \cN_G(B)$, where $\cN_G(A):=\{ g\in G:\ gAg^{-1}=A\}.$
\end{itemize} 

In particular, if $G:=K\rtimes V$ and $\ti G:=\ti K\rtimes V$ are two fraction groups constructed from covariant monoidal functors from $\cF$ to $\Gr$, then any isomorphism $\theta:G\to\ti G$ satisfies $\theta(K)=\ti K.$
\end{proposition}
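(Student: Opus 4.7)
The plan is to prove the two assertions in sequence and deduce the ``in particular'' clause from them. Throughout, let $\pi: G = K \rtimes V \to V$ denote the quotient projection.

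First, I would show that $K$ itself satisfies the decomposability property by exploiting the monoidal structure of $\Phi$. The tree $Y$ with two leaves gives an inclusion $\Ga \oplus \Ga = \Ga_Y \hookrightarrow K$ which, propagated through the directed system, yields a natural direct-sum decomposition $K = A \oplus B$, where $A$ (resp.\ $B$) consists of elements supported in the left (resp.\ right) half of $\fC$. This decomposition is immediate since every element of $K$ admits a representative on a tree refining $Y$ and splits uniquely into left and right parts. The delicate point is to verify $\cN_G(A) = \cN_G(B) = K$. Since $A, B$ are normal in $K$, we have $K \subseteq \cN_G(A)$, so the task reduces to identifying which $v \in V$ satisfy $\pi_v(A) = A$. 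From the explicit description of the Jones action one reads $\supp(\pi_v(a)) = v(\supp(a))$, so such $v$ must preserve the left half setwise. As the naive left/right splitting has a large setwise stabilizer, I would refine the choice of $A, B$ by using finer clopen partitions (or subgroups incorporating twist data from $\al_0, \al_1$) whose joint $V$-stabilizer is trivial, thereby forcing $\cN_G(A) = K$.

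Second, for the maximality, suppose $H \triangleleft G$ admits a decomposition $H = A' \oplus B'$ with $\cN_G(A') = \cN_G(B') = H$. I claim $H \subseteq K$. Since $V$ is simple, $\pi(H) \triangleleft V$ is either trivial or all of $V$. If trivial, then $H \subseteq K$ and we are done. Otherwise, $\pi(A'), \pi(B')$ are commuting subgroups of $V$ with $\pi(A') \cdot \pi(B') = V$; a direct commutator computation shows that this configuration would force $V$ to be abelian, which contradicts non-abelianness of Thompson's $V$. Hence one of them, say $\pi(A')$, is trivial, so $A' \subseteq K$. The condition $\cN_G(A') = H$ combined with $\pi(H) = V$ then demands that for every $v \in V$ some $k \in K$ makes $kv$ normalize $A'$, i.e., $\pi_v(A') = k^{-1} A' k$; exhibiting $v \in V$ whose action breaks any such direct-sum decomposition of $K$ yields a contradiction.

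The ``in particular'' clause follows formally: the decomposability property is defined purely group-theoretically, so any isomorphism $\theta: G \to \ti G$ must send the unique maximal normal subgroup of $G$ with this property (namely $K$) onto the corresponding subgroup of $\ti G$ (namely $\ti K$), giving $\theta(K) = \ti K$. The main obstacle is the first step: pinning down a precise choice of $A, B$ whose joint $V$-stabilizer is trivial. The naive decomposition along a single $Y$-split is preserved by too large a subgroup of $V$, so one must either iterate through deeper tree refinements or exploit the twist data from $\al_0, \al_1$ in the choice of $A$ and $B$. This is the technical heart of the existence half of the argument.
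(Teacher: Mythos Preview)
This paper does not prove Proposition~\ref{prop:chara}: it is quoted from \cite[Proposition 2.4]{Brothier20-1} and invoked as a black box. There is therefore no proof here to compare your attempt against.

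That said, your outline has two genuine gaps. First, in the existence step you correctly notice that the naive left/right splitting $K=A\oplus B$ along the two halves of $\fC$ fails the normaliser condition --- any $v\in V$ stabilising the first half setwise normalises $A$, so $\cN_G(A)\supsetneq K$ --- but you never produce a decomposition that actually works. Saying you would ``refine the choice of $A,B$'' or ``exploit twist data'' is not a proof; this is precisely the point that needs an explicit construction, and nothing in your proposal supplies one.

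Second, in the maximality step your claim that ``$\pi(A')$ and $\pi(B')$ commuting with product $V$ forces $V$ abelian'' is not right as written: two elementwise-commuting subgroups whose product is a group do not force that group to be abelian. What \emph{is} true is that in this situation each $\pi(A')$, $\pi(B')$ is normal in $V$ (conjugate by $v=ab$ and use that $b$ centralises $\pi(A')$), so simplicity of $V$ forces one of them to be trivial --- the conclusion you want, via a different route. After that, you assert that ``exhibiting $v\in V$ whose action breaks any such direct-sum decomposition of $K$ yields a contradiction'', but you do not exhibit such a $v$ or explain why the condition $\pi_v(A')=k^{-1}A'k$ for all $v$ is impossible. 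That step is the heart of the maximality argument and is left entirely open.
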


The following result is a direct consequence of Propositions 3.3 and 3.5 of the article \cite{Brothier20-1}. 
The first cited proposition shows that we can always assume that $\al$ is an automorphism. The second cited proposition gives the explicit wreath product description of the fraction group.

\begin{proposition}\label{prop:WP}
Consider a group $\Ga$ and an endomorphism $\al\in\End(\Ga).$
Let $G=K\rtimes V$ be the fraction group associated to the triple $(\Ga,\al,\varep_\Ga)$ where $\varep_\Ga:g\in\Ga\mapsto e_\Ga$ denotes the trivial endomorphism.

There exists a group $\ti\Ga$ and an automorphism $\ti\al\in\Aut(\ti\Ga)$ satisfying that $G$ is isomorphic via a $V$-equivariant map to the fraction group $\ti G=\ti K\rtimes V$ associated to the triple $(\ti\Ga,\ti\al,\varep_{\ti\Ga}).$
Moreover, $\ti\Ga$ is isomorphic to the restricted twisted permutational wreath product $\oplus_{\Qt}\ti\Ga\rtimes V$ where the action $V\act \oplus_{\Qt}\ti \Ga$ is the following:
$$(v\cdot a)(vx) : = {\ti\al}^{\log_2(v'(x))}( v(x) ) ,\ v\in V, a\in \oplus_\Qt \ti\Ga, x\in\Qt.$$
\end{proposition}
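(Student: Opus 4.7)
My plan is to split the proof into the two tasks stated and to rely on the general machinery developed in Section \ref{sec:loop} specialised to the degenerate case $\al_1 = \varep_\Ga$.

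For assertion (1), I would first apply Proposition \ref{prop:Tanushevski} to pass from $(\Ga, \al, \varep_\Ga)$ to the quotient $(\Ga/N, \bar\al, \varep_{\Ga/N})$ with $N = \bigcup_{t \in \fT} \ker\Phi(t)$, producing a $V$-equivariant isomorphism of fraction groups and an injective $\bar\al$. To promote $\bar\al$ to an automorphism I would then form the inductive limit $\ti\Ga := \varinjlim(\Ga/N \xrightarrow{\bar\al} \Ga/N \xrightarrow{\bar\al} \cdots)$ on which $\bar\al$ extends to an automorphism $\ti\al$, and check that the Jones actions are $V$-equivariantly isomorphic. The justification for the second step rests on the observation, already hinted at in Section \ref{sec:loop}, that because $\al_1 = \varep_\Ga$ annihilates every right branch, refining a label along a forest $f$ can only preserve it along the leftmost leaf of each $f_\ell$ via iterated application of $\al$; hence an element of $K$ is exactly a coherent family of labels up to iterating $\al$, which is precisely what the inductive limit $\ti\Ga$ records.

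For assertion (2), assume now that $\ti\al \in \Aut(\ti\Ga)$. I would define
\[
\kappa_t : \ti\Ga_t \to \oplus_\Qt \ti\Ga, \quad (g_\ell)_{\ell \in \Leaf(t)} \mapsto \sum_{\ell \in \Leaf(t)} \ti\al^{-|m_{I^\ell_t}|}(g_\ell) \,\delta_{r_\ell},
\]
where $r_\ell$ is the left endpoint of the sdi $I^\ell_t$. The checks to carry out are: (a) compatibility with the directed system, namely $\kappa_{f \circ t} \circ \Phi(f) = \kappa_t$ for every composable forest $f$, which reduces to the identity $|m_{I^u_{f \circ t}}| = |m_{I^\ell_t}| + |p_u|$ combined with the fact that right branches of $f$ vanish under $\varep_{\ti\Ga}$ so only the leftmost leaf $u$ of each $f_\ell$ contributes (giving $p_u = 0^{|p_u|}$ and $\al_{p_u} = \ti\al^{|p_u|}$); (b) the induced $\kappa: \ti K \to \oplus_\Qt \ti\Ga$ is an isomorphism, with injectivity from $\ti\al$ being an automorphism and surjectivity from the fact that any finitely supported map is realised on a sufficiently fine sdp.

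For the action formula, I would fix $v \in V$, $a \in \oplus_\Qt \ti\Ga$, and $x \in \Qt$, choose a tree $t$ both adapted to $v$ and fine enough to carry $a$, so that $x = r_\ell$ for some leaf $\ell$ with sdi $I^\ell_t$ and $vx = r_{\ell'}$ for the corresponding leaf $\ell'$ of the target tree with sdi $J = v(I^\ell_t)$. The general description of the Jones action in Section \ref{sec:loop} says that $v$ acts by transporting the label $g_\ell$ at $\ell$ to $\ell'$, and translating back through $\kappa$ gives the new value at $vx$ as
\[
\ti\al^{-|m_J|}(g_\ell) = \ti\al^{-|m_J|} \ti\al^{|m_{I^\ell_t}|}(a(x)) = \ti\al^{|m_{I^\ell_t}| - |m_J|}(a(x)) = \ti\al^{\log_2(v'(x))}(a(x)),
\]
which is the claimed formula.

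The main obstacle is assertion (1), specifically the passage to the inductive limit: I need to exhibit a $V$-equivariant isomorphism between the fraction group built from $(\Ga/N, \bar\al, \varep_{\Ga/N})$ and that built from $(\ti\Ga, \ti\al, \varep_{\ti\Ga})$, even though $\ti\Ga$ is strictly larger than $\Ga/N$ when $\bar\al$ is not surjective. The key is to produce compatible families of representatives: given an element of $\ti K$ represented by a tree with labels in $\ti\Ga$, use the definition of $\ti\Ga$ as an inductive limit to push the labels down via a sufficiently large iterate of $\ti\al^{-1}$, which after refining the tree by a left comb of matching depth lands us back in $\Ga/N$. This extra bookkeeping, while essentially routine, is where the care has to go.
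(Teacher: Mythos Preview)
The paper does not prove this proposition; it quotes it from \cite{Brothier20-1}, explaining that Proposition 3.3 there reduces to the case of an automorphism and Proposition 3.5 gives the wreath-product description. Your two-step plan follows exactly this structure, and your sketch for step (2) is the direct analogue of the argument in Proposition~\ref{prop:twistedLoop} specialised to $\al_1=\varep_\Ga$ (supports collapse to left endpoints, the untwisting exponent becomes $|m_I|-|m_{v(I)}|=\log_2 v'(x)$), so the approach is essentially the one indicated.

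One small wording issue in your surjectivity sketch for step (1): refining by a left comb of depth $d$ applies $\ti\al^{\,d}$ to the label (since $\ti\Phi(Y)(h)=(\ti\al(h),e)$), not $\ti\al^{-d}$. The correct statement is that any $h_\ell\in\ti\Ga$ can be written as $h_\ell=[g_\ell,n_\ell]$ with $g_\ell\in\Ga/N$, and then refining each leaf by a left comb of depth $d\ge\max_\ell n_\ell$ replaces $h_\ell$ by $\ti\al^{\,d}(h_\ell)=[\bar\al^{\,d-n_\ell}(g_\ell),0]\in j_0(\Ga/N)$; this exhibits the element as the image of something in $K$. Your idea is right, just make sure the direction of the automorphism matches the refinement.
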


\begin{proposition}\cite[Proposition 4.7]{Brothier20-1}\label{prop:cocycle}
Let $\La$ be an Abelian group and consider the set of cocycles:
$$\Coc=\Coc(V\act \prod_{\Q_2}\La) := \{ c:V\to \prod_{\Q_2}\La:\ c_{vw} = c_v \cdot c_w^v, \ \forall v,w\in V\}.$$
This set is an Abelian group under the pointwise product 
$$(c\cdot d)_v(x):= c_v(x)\cdot d_v(x), c,d\in\Coc, v\in V, x\in\Q_2.$$
For any $\zeta\in\La$ define
$$s(\zeta)_v(x):= \zeta^{\log_2(v'(v^{-1}x))}, v\in V,x\in\Q_2$$ 
which is a cocycle.
For any $c\in\Coc$ there exists a pair $(\zeta,f)\in \La\times \prod_{\Q_2}\La$ satisfying that $c_v = s(\zeta_v)\cdot f (f^v)^{-1}, v\in V$.
Moreover, the pair $(\zeta,f)$ is unique up to multiplying $f$ by a constant map. 
\end{proposition}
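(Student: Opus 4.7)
My plan is as follows. First I verify that $s(\zeta)$ lies in $\Coc$: the chain rule evaluated at $y = w^{-1}v^{-1}x$ gives $\log_2((vw)'(w^{-1}v^{-1}x)) = \log_2(v'(v^{-1}x)) + \log_2(w'(w^{-1}v^{-1}x))$, which exponentiates to $s(\zeta)_{vw} = s(\zeta)_v \cdot s(\zeta)_w^v$. The coboundaries $v \mapsto f \cdot (f^v)^{-1}$ form a subgroup of $\Coc$, so the statement amounts to showing (i) every cocycle is a product of an $s(\zeta)$ and a coboundary, and (ii) this decomposition is unique up to multiplying $f$ by a constant.

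For existence I fix the base-point $0 \in \Qt$. Since $h^{-1} \cdot 0 = 0$ for every $h \in V_0$, the cocycle identity specialises at $0$ to $c_{h_1 h_2}(0) = c_{h_1}(0) \cdot c_{h_2}(0)$, so $h \mapsto c_h(0)$ is a group homomorphism $V_0 \to \La$. As computed in the proof of Proposition \ref{prop:slope}, the prime word attached to $0$ is $0$ itself, of length one, so the log-slope $v \mapsto \log_2(v'(0))$ induces an isomorphism $V_0/V_0' \cong \Z$; hence there is a unique $\zeta \in \La$ with $c_h(0) = \zeta^{\log_2(h'(0))}$ for all $h \in V_0$. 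Replacing $c$ by $c \cdot s(\zeta)^{-1}$ I may assume $c_h(0) = e_\La$ on the entire stabiliser. Using transitivity of $V$ on $\Qt$, I pick for each $y \in \Qt$ some $v_y \in V$ with $v_y(0) = y$ and set $f(y) := c_{v_y}(y)$. Two such choices differ by a right factor $h \in V_0$, and the cocycle identity combined with $c_h(0) = e_\La$ kills the extra term, so $f$ is well-defined. A direct expansion via the cocycle identity then yields $c_v(x) = f(x) \cdot f(v^{-1}x)^{-1}$ for all $v \in V$ and $x \in \Qt$, which is the required decomposition.

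For uniqueness, two decompositions give $s(\zeta'')_v = g \cdot (g^v)^{-1}$ for some $g \in \prod_\Qt \La$ and $\zeta'' \in \La$. Evaluating at $x = 0$ on $h \in V_0$ kills the right-hand side, so $(\zeta'')^{\log_2(h'(0))} = e_\La$ for all $h \in V_0$; surjectivity of the log-slope onto $\Z$ forces $\zeta'' = e_\La$. The remaining identity $g^v = g$ for every $v \in V$ says that $g$ is $V$-invariant on $\Qt$, hence constant by transitivity. The main technical step is the well-definedness of $f$, which rests on having already absorbed the stabiliser's contribution into $s(\zeta)$; everything else is routine cocycle bookkeeping, with the abelianisation $V_0/V_0' \cong \Z$ supplied by Proposition \ref{prop:slope} doing the essential group-theoretic work.
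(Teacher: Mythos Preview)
Your proof is correct. Note that the present paper does not supply its own proof of this proposition: it is quoted verbatim from the companion article \cite{Brothier20-1} (Proposition~4.7 there), so there is no in-paper argument to compare against directly. That said, your argument follows exactly the natural line and matches what one expects from the original: reduce to the stabiliser $V_0$ of the base-point, use the abelianisation $V_0/V_0'\cong\Z$ via the log-slope (which is precisely the content of Proposition~\ref{prop:slope} at $x=0$, where the prime word has length one), extract $\zeta$, kill it, and then transport along the transitive action to build $f$. The verification $c_v(x)=f(x)f(v^{-1}x)^{-1}$ via $c_{v\cdot v_y}(x)=f(x)$ is the clean way to close the existence step, and your uniqueness argument is the standard one. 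Nothing is missing.
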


\section{Isomorphic fraction groups}

The aim of this section is to provide a partial classification of the class of fraction groups/semidirect products $L\Ga\rtimes V$ constructed from triples $(\Ga,\al_0,\al_1)$ where $\Ga$ is any group and $\al_0,\al_1$ are \textit{automorphisms} of $\Ga.$

\subsection{Obvious isomorphisms between fraction groups}

We start by constructing obvious isomorphisms between fraction groups.

\begin{lemma}
Consider two groups $\Ga,\ti\Ga$, an isomorphism $\beta\in\Isom(\Ga,\ti\Ga)$ and two automorphisms $\al_0,\al_1\in\Aut(\Ga)$. 
Define the automorphisms $\ti\al_i:= \beta\al_i \beta^{-1}$ for $i=0,1$ of the group $\ti\Ga.$
We consider the two fraction groups $L\Ga\rtimes V$ and $L\ti\Ga\rtimes V$ associated to the triples $(\Ga,\al_0,\al_1)$ and $(\ti \Ga, \ti\al_0,\ti\al_1)$ respectively.
The following formula defines an isomorphism of groups:
$$\theta: L\Ga\rtimes V \to L\ti\Ga\rtimes V , \ \theta(av) = \kappa(a) v , \ a\in L\Ga, v\in V
$$
where $\kappa\in \Isom(L\Ga,L\ti\Ga)$ is defined as 
$$\kappa(a)(x) := \beta(a(x)), \ a\in L\Ga, x\in \fC.$$
\end{lemma}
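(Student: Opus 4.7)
The plan is to check three things in order: that $\kappa$ is a well-defined group isomorphism from $L\Ga$ to $L\ti\Ga$, that $\theta$ is a group homomorphism, and that $\theta$ is bijective.

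First I would observe that $\kappa$ is well-defined: if $a\in L\Ga$ is constant on each sdi of a sdp $(I_1,\ldots,I_n)$, then so is $\kappa(a)=\beta\circ a$, so $\kappa(a)\in L\ti\Ga$. Since $\beta$ is a group isomorphism and the group operation on $L\Ga$ and $L\ti\Ga$ is pointwise, $\kappa$ is a group homomorphism; it is bijective with inverse $a\mapsto \beta^{-1}\circ a$.

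The main verification is that $\theta$ respects the multiplication, which reduces to showing that $\kappa$ intertwines the two Jones actions, that is,
\[
\kappa(\pi_v(b)) = \ti\pi_v(\kappa(b)) \quad \text{for all } v\in V,\ b\in L\Ga.
\]
Indeed, granting this, expanding $\theta(av\cdot bw)=\theta(a\,\pi_v(b)\,vw)=\kappa(a)\kappa(\pi_v(b))vw$ and $\theta(av)\theta(bw)=\kappa(a)v\kappa(b)w=\kappa(a)\ti\pi_v(\kappa(b))vw$ shows these are equal. To prove the intertwining I would use the explicit formula from Proposition \ref{prop:twistedLoop}: for any sdi $I$ adapted to $v$ and any $x\in I$,
\[
\pi_v(b)(vx) = \tau_{v,x}(b(x)), \quad \tau_{v,x} = \al_{v(I)}^{-1}\al_I,
\]
and analogously $\ti\pi_v(\kappa(b))(vx) = \ti\tau_{v,x}(\beta(b(x)))$ with $\ti\tau_{v,x}=\ti\al_{v(I)}^{-1}\ti\al_I$. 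The key algebraic input is that, since $\ti\al_i = \beta\al_i\beta^{-1}$ for $i=0,1$, induction on the length of a finite word $m=l_1\cdots l_n$ (using $\al_m=\al_{l_n}\circ\cdots\circ\al_{l_1}$) gives $\ti\al_m = \beta\al_m\beta^{-1}$. Applying this to $m_I$ and $m_{v(I)}$ yields $\ti\tau_{v,x} = \beta\tau_{v,x}\beta^{-1}$, so
\[
\ti\pi_v(\kappa(b))(vx) = \beta\tau_{v,x}\beta^{-1}(\beta(b(x))) = \beta(\tau_{v,x}(b(x))) = \kappa(\pi_v(b))(vx),
\]
proving the intertwining.

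Finally, $\theta$ is bijective because $\kappa$ is: the map $\theta': L\ti\Ga\rtimes V\to L\Ga\rtimes V$, $\theta'(\ti av)=\kappa^{-1}(\ti a)v$, is an inverse by the same argument applied to $\beta^{-1}$. I do not expect any serious obstacle here; the intertwining identity is the whole content of the lemma, and it is a direct consequence of the compatibility $\ti\al_i=\beta\al_i\beta^{-1}$ combined with the explicit description of the Jones action given in Proposition \ref{prop:twistedLoop}.
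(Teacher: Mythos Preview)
Your proof is correct and follows essentially the same approach as the paper: both establish that $\kappa$ is a $V$-equivariant isomorphism by using the identity $\ti\al_m=\beta\al_m\beta^{-1}$ for finite words $m$ to obtain $\ti\tau_{v,x}=\beta\tau_{v,x}\beta^{-1}$, and then conclude via the explicit description of the Jones action from Proposition~\ref{prop:twistedLoop}. Your write-up is slightly more detailed (explicitly verifying well-definedness, the homomorphism property of $\theta$, and constructing the inverse), but the content is the same.
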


\begin{proof}
%Product isomorphism
Consider $\beta\in \Isom(\Ga,\ti\Ga)$ and define the product of isomorphisms
$$\kappa := \prod_{\fC} \beta:\prod_{\fC}\Ga\to \prod_{\fC}\ti\Ga,\ \kappa(a)(x):=\beta(a(x)), a\in \prod_\fC\Ga, x\in\fC.$$
%Restriction to locally constant maps
This is an isomorphism that maps $L\Ga$ onto $L\ti\Ga$ and thus restrict to an isomorphism $\kappa\in\Isom(L\Ga,L\ti\Ga).$
%V-equivariance
Let us check that $\kappa$ is $V$-equivariant for the Jones actions $\pi:V\act L\Ga$ and $\ti\pi:V\act L\ti\Ga$.
For any $v\in V$ we consider the locally constant maps $\tau_{v,\cdot}:\fC\to \Aut(\Ga)$ and $\ti\tau_{v,\cdot}:\fC\to\Aut(\ti\Ga)$ defined in Notation \ref{nota:tau} that are associated to $(\al_0,\al_1)$ and $(\ti\al_0,\ti\al_1)$ respectively.
Observe that if $p=p_1\cdots p_n$ is a finite word and $\al_p:=\al_{p_n}\circ\cdots\circ\al_{p_1}$ is the associated automorphism, then $\beta\circ \al_p=\ti\al_p\circ\beta$.
In particular, $\beta\circ\tau_{v,x}=\ti\tau_{v,x}\circ\beta$ for all $v\in V, x\in\fC$.
Observe that 
$$\kappa(\pi_v(a)) (vx)  = \beta( \pi_v(a)(vx) ) = \beta( \tau_{v,x}(a(x) ) )  = \ti\tau_{v,x} ( \beta(a(x)) )= \ti\pi_v(\kappa(a))(x),$$
for all $v\in V,x\in\fC, a\in L\Ga.$
%Extension
Since $\kappa$ is $V$-equivariant it extends into an isomorphism from $L\Ga\rtimes V$ onto $L\ti\Ga\rtimes V$ via the formula
$av\mapsto \kappa(a)v$ for $a\in L\Ga, v\in V$.
\end{proof}

\begin{lemma}\label{lem:isomaut}
Consider a group $\Ga$, some elements $h_i\in\Ga$ and automorphisms $\al_i\in\Aut(\Ga)$ for $i=0,1.$
Put $\ti\al_i:= \ad(h_i)\circ \al_i,i=0,1$ and consider the two fraction groups $G$ and $\ti G$ with associated Jones actions $\pi:V\act L\Ga$ and $\ti\pi:V\act L\Ga$ that are constructed from the triples $(\Ga,\al_0,\al_1)$ and $(\Ga,\ti\al_0,\ti\al_1)$ respectively.
These two fraction groups are isomorphic.
\end{lemma}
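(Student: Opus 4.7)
The plan is to construct the isomorphism in a very explicit form, namely $\theta(av)=a\cdot c_v\cdot v$, where the $L\Ga$-component is left untouched and the whole twist is absorbed into a single cocycle $c:V\to L\Ga$ for the target action $\ti\pi$. The intuition is that passing from $(\al_0,\al_1)$ to $(\ad(h_0)\al_0,\ad(h_1)\al_1)$ only affects the Jones action by an inner perturbation that varies locally on the Cantor space, and such a perturbation is exactly what a cocycle correction of the semidirect product absorbs.

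First I would introduce, for every finite word $m=l_1\cdots l_n\in\fs$, an element $H_m\in\Ga$ defined recursively by $H_\emptyset=e_\Ga$ and $H_{m\cdot j}=h_j\cdot\al_j(H_m)$ for $j\in\{0,1\}$. Using the identity $\ad(h)\al\cdot\ad(k)\al'=\ad(h\cdot\al(k))\cdot\al\al'$, an immediate induction on the length of $m$ gives $\ti\al_m=\ad(H_m)\circ\al_m$. For a sdi $I$ with associated word $m_I$, writing $H_I:=H_{m_I}$, this specializes to $\ti\al_I=\ad(H_I)\circ\al_I$. Expanding $\ti\tau_{v,x}=\ti\al_{v(I)}^{-1}\ti\al_I$ and collecting the conjugations yields the key identity
\begin{equation*}
\ti\tau_{v,x}=\ad\bigl(w_v(x)\bigr)\circ\tau_{v,x},\qquad w_v(x):=\al_{v(I)}^{-1}\bigl(H_{v(I)}^{-1}H_I\bigr),
\end{equation*}
for $x\in I$ and $I$ a sdi adapted to $v$.

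Next I would define $c_v(y):=w_v(v^{-1}y)^{-1}=\al_{v(I)}^{-1}\bigl(H_I^{-1}H_{v(I)}\bigr)$ for $y\in v(I)$. Since $w_v$ is constant on each sdi of a sdp adapted to $v$, we have $c_v\in L\Ga$. The previous identity then translates into the intertwining relation $\pi_v=\ad(c_v)\circ\ti\pi_v$ viewed as automorphisms of $L\Ga$. The technical heart of the proof is then to verify the cocycle identity $c_{v_1v_2}=c_{v_1}\cdot\ti\pi_{v_1}(c_{v_2})$. After refining the partition so that a common sdi $I$ is adapted to $v_2$ and $v_2(I)$ is adapted to $v_1$, setting $J=v_2(I)$ and $K=v_1(J)$, both sides compute to $\al_K^{-1}(H_I^{-1}H_K)$; this is a short telescoping computation that relies only on the recursive formula for $H$ and the explicit expression of $\ti\tau_{v_1,\cdot}$ applied to $c_{v_2}\in L\Ga$.

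Finally, with these two properties in hand, a direct check shows that the map $\theta:L\Ga\rtimes_\pi V\to L\Ga\rtimes_{\ti\pi}V$ defined by $\theta(av):=a\cdot c_v\cdot v$ is multiplicative: expanding $\theta(a_1v_1)\theta(a_2v_2)=a_1c_{v_1}\ti\pi_{v_1}(a_2)\ti\pi_{v_1}(c_{v_2})v_1v_2$ and using $c_{v_1}\ti\pi_{v_1}(a_2)c_{v_1}^{-1}=\pi_{v_1}(a_2)$ together with the cocycle condition yields $a_1\pi_{v_1}(a_2)c_{v_1v_2}v_1v_2=\theta(a_1v_1\cdot a_2v_2)$. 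Bijectivity is immediate (the inverse is $bv\mapsto b\cdot c_v^{-1}\cdot v$), so $\theta$ is the desired isomorphism. The main obstacle is bookkeeping: carefully tracking the words, sdis, and the recursion for $H_m$ so that the cocycle computation lines up. There is no conceptual difficulty beyond this; once the formula for $H_m$ and the local intertwining of $\tau$ and $\ti\tau$ are written down, the rest is a finite verification.
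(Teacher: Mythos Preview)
Your proposal is correct and follows essentially the same approach as the paper. Your recursively defined $H_m$ coincides with the paper's $h_m^\al$, and your cocycle $c_v$ is the pointwise inverse of the paper's (because you build the isomorphism $G\to\ti G$ while the paper goes $\ti G\to G$); the intertwining relation $\pi_v=\ad(c_v)\circ\ti\pi_v$ and the $\ti\pi$-cocycle identity you state are exactly what is needed for multiplicativity, and your telescoping computation $c_{v_1}\cdot\ti\pi_{v_1}(c_{v_2})=\al_K^{-1}(H_I^{-1}H_K)=c_{v_1v_2}$ is the same verification the paper leaves as a ``direct computation''.
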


\begin{proof}
For any $n\geq 1$ and any finite sequence $q=q_1\cdots q_n\in \{0,1\}^n$ we define 
$$h_q^\al := h_{q_n}\cdot \al_{q_n}(h_{q_{n-1}})\cdot \al_{q_{n-1} q_n  }(h_{q_{n-2} } ) \cdots \al_{q_2 \cdots q_n }(h_{q_1}).$$
Recall, that we adopted the convention: 
$$\al_{p_1\cdots p_k} = \al_{p_k}\circ\cdots\circ \al_{p_1} \text{ for } k\geq 1, p_1,\cdots,p_k\in \{0,1\}.$$
Observe that 
$$\ti\al_q = \ad(h^\al_q) \circ \al_q.$$
Moreover, if $q,m$ are finite words, then $$h^\al_{q\cdot m} = h^\al_m\cdot \al_m(h^\al_q).$$
For any $v\in V$ and $x\in\fC$ consider a sdi $I$ containing $x$ and adapted to $v$.
Put 
$$c_v(vx):= \al_{ v(I)  }^{-1} ( (h_{m_{v(I)} }^\al)^{-1} h_{m_I}^\al),$$
where $m_I\in\fs$ is the word associated to $I$.
This formula does not depend on the choice of $I$ and thus $c_v$ is a well-defined map from $\fC$ to $\Ga.$
Note that if a sdi $I$ is adapted to $v$, then the map $x\mapsto c_v(vx)$ is constant on $I$.
Therefore, $c_v$ is in $L\Ga.$
Moreover, the map $c:V\to L\Ga, v\mapsto c_v$ satisfies the cocycle identity:
$$c_{v}c_w^v=c_{vw} \text{ for all } v,w\in V,$$
where $c_w^v(x):=c_w(v^{-1}x), x\in\fC.$
A direct computation shows that the map
$$\theta: \ti G\to G ,\ \theta(av) = a \cdot c_v \cdot v, \  a\in L\Ga, v\in V$$
defines a group isomorphism with inverse 
$$bw\mapsto b\cdot c_w^{-1}\cdot w, \ b\in L\Ga, w\in V.$$
\end{proof}

\begin{lemma}
Consider a group $\Ga$ and a pair of automorphisms $(\al_0,\al_1)$ of $\Ga.$
Let $G,\ti G$ be the fraction groups associated to the triples $(\Ga,\al_0,\al_1)$ and $(\Ga,\al_1,\al_0)$ respectively. 
These two fraction groups are isomorphic.
\end{lemma}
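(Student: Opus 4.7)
The plan is to realise the isomorphism via conjugation by a suitable homeomorphism of the Cantor space $\fC$ that implements the ``left/right mirror'' of every binary tree. Concretely, define the involution
\[
\varphi:\fC\to\fC,\ \varphi(x_1,x_2,\dots)=(1-x_1,1-x_2,\dots).
\]
Since $\varphi$ sends a finite word $m\in\fs$ to its bitwise complement $\bar m$, it maps any sdi $I$ onto the sdi $\varphi(I)$ with $m_{\varphi(I)}=\bar m_I$. In particular, if $v\in V$ is associated to the sdp $(I_k)$ and $(J_k)$, then $\varphi v\varphi^{-1}\in V$ is the element associated to $(\varphi(I_k))$ and $(\varphi(J_k))$; hence $\varphi\in\NCV$.

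Next I would define the morphism $\kappa:L\Ga\to L\Ga$, $\kappa(a):=a\circ\varphi$. Since pulling back by $\varphi$ sends any sdp to another sdp, $\kappa$ is a well-defined group automorphism of $L\Ga$ (with $\kappa^{-1}=\kappa$). I would then set
\[
\theta: \ti G\to G,\ \theta(av):=\kappa(a)\cdot(\varphi v\varphi^{-1}),\ a\in L\Ga,v\in V,
\]
and check that $\theta$ is a group isomorphism. Since $\ad_\varphi$ is an automorphism of $V$ and $\kappa$ is an automorphism of $L\Ga$, the only nontrivial point is the semidirect product compatibility: $\kappa$ must intertwine the Jones action $\ti\pi$ associated to $(\al_1,\al_0)$ with the Jones action $\pi$ associated to $(\al_0,\al_1)$ twisted by $\ad_\varphi$, i.e.\ one needs
\[
\kappa\circ\ti\pi_v=\pi_{\varphi v\varphi^{-1}}\circ\kappa,\ \forall v\in V.
\]

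The key observation, and the only computational step, is that for any finite word $p=p_1\cdots p_n$,
\[
\ti\al_p=\ti\al_{p_n}\circ\cdots\circ\ti\al_{p_1}=\al_{1-p_n}\circ\cdots\circ\al_{1-p_1}=\al_{\bar p},
\]
so $\ti\al_I=\al_{\varphi(I)}$ for every sdi $I$. Picking a sdi $J\ni x$ adapted to $v$, the sdi $\varphi(J)\ni\varphi(x)$ is adapted to $\varphi v\varphi^{-1}$, and using Notation \ref{nota:tau} one obtains
\[
\tau_{\varphi v\varphi^{-1},\varphi(x)}^{(\pi)}=\al_{\varphi(v(J))}^{-1}\al_{\varphi(J)}=\ti\al_{v(J)}^{-1}\ti\al_{J}=\ti\tau_{v,x}.
\]
Combining this identity with Proposition \ref{prop:twistedLoop} and evaluating both sides of the intertwining relation at the point $\varphi(vx)=(\varphi v\varphi^{-1})\varphi(x)$ gives $\ti\tau_{v,x}(a(x))$ on both sides, proving the intertwining relation. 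The main (and only) obstacle is keeping the composition conventions straight between $\al_{\cdot}$ and the word-complement operation; once this bookkeeping is done, it is immediate that $\theta$ is a well-defined group isomorphism with inverse $bw\mapsto\kappa(b)\cdot(\varphi^{-1}w\varphi)$.
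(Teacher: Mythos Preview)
Your proof is correct and follows essentially the same approach as the paper: both use the bit-flip involution $\varphi$ on $\fC$ (the paper writes it as $\varphi(x)(i)=\sigma(x(i))$ for $\sigma$ the transposition of $\{0,1\}$), observe that $\varphi\in\NCV$, and define the isomorphism as $av\mapsto (a\circ\varphi^{-1})\cdot\ad_\varphi(v)$. The paper's proof simply states ``One can check that $\theta$ is an isomorphism of groups,'' whereas you actually carry out that check by verifying the identity $\ti\tau_{v,x}=\tau_{\varphi v\varphi^{-1},\varphi(x)}$ via $\ti\al_I=\al_{\varphi(I)}$; so your argument is the same in spirit but more complete in detail.
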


\begin{proof}
Consider the permutation $\sigma$ of $\{0,1\}$ of order $2$ and defines the map $\varphi:\fC\to\fC$ such that $\varphi(x)(i)= \sigma(x(i))$ for $x\in\{0,1\}^\N$ and $i\in \N.$
This is a homeomorphism of $\fC$ that normalises $V$ inside $\Homeo(\fC).$
Define the map 
$$\theta: G\to \ti G,\ \theta(av):= (a\circ\varphi^{-1})\cdot \ad_\varphi(v), a\in L\Ga, v\in V$$
where $\ad_\varphi(v):=\varphi v\varphi^{-1}.$
One can check that $\theta$ is an isomorphism of groups.
\end{proof}

From these obvious isomorphisms we obtain the following results:
\begin{proposition}\label{prop:conjug}
Consider two groups $\Ga,\ti\Ga$ together with two pairs of automorphisms $(\al_0,\al_1)\in\Aut(\Ga)^2,(\ti\al_0,\ti\al_1)\in\Aut(\ti\Ga)^2$ and consider the associated fraction groups $G:=L\Ga\rtimes V, \ti G:=L\ti\Ga\rtimes V$ respectively.

If there exists an isomorphism $\beta\in\Isom(\Ga,\ti\Ga)$, a permutation $\sigma$ of $\{0,1\}$ and two elements $h_i\in \ti\Ga,i=0,1$ satisfying
$$\ti\al_{\sigma(i)} = \ad(h_i)\circ \beta\al_i\beta^{-1} \text{ for } i=0,1,$$
then $G$ is isomorphic to $\ti G.$
\end{proposition}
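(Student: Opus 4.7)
The plan is to chain together the three elementary isomorphisms constructed in the preceding lemmas, in an order dictated by the hypothesis. The idea is that each of the three lemmas modifies the defining pair of automorphisms by one of three basic moves: conjugation by a group isomorphism $\beta$, composition with inner automorphisms $\ad(h_i)$, and permutation of the two slots. Since the hypothesis says the two pairs $(\al_0,\al_1)$ and $(\ti\al_0,\ti\al_1)$ differ exactly by a combination of these three moves, we can reach $\ti G$ from $G$ by applying them successively.

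Concretely, first I would apply the first lemma using the isomorphism $\beta\in\Isom(\Ga,\ti\Ga)$ to obtain an isomorphism
\[
G=L\Ga\rtimes V \;\simeq\; G_1 := L\ti\Ga\rtimes V,
\]
where $G_1$ is the fraction group built from the triple $(\ti\Ga,\beta\al_0\beta^{-1},\beta\al_1\beta^{-1})$. Next, if $\sigma$ is the nontrivial transposition of $\{0,1\}$, I would apply the third lemma to swap the two automorphisms and obtain an isomorphism $G_1\simeq G_2$, where $G_2$ is built from the triple $(\ti\Ga,\hat\al_0,\hat\al_1)$ with $\hat\al_j := \beta\al_{\sigma^{-1}(j)}\beta^{-1}$ for $j=0,1$; if $\sigma$ is the identity, I simply skip this step and set $G_2=G_1$. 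Finally, I would apply the second lemma with the group $\ti\Ga$, the automorphisms $\hat\al_0,\hat\al_1$, and the elements $h'_j := h_{\sigma^{-1}(j)}\in\ti\Ga$ to conclude $G_2\simeq \ti G$.

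The only verification needed is that after these three moves the resulting pair of automorphisms on $\ti\Ga$ is literally $(\ti\al_0,\ti\al_1)$. Rewriting the hypothesis with $j=\sigma(i)$, i.e.\ $i=\sigma^{-1}(j)$, gives
\[
\ti\al_j \;=\; \ad(h_{\sigma^{-1}(j)})\circ \beta\al_{\sigma^{-1}(j)}\beta^{-1} \;=\; \ad(h'_j)\circ\hat\al_j,
\]
which is exactly the hypothesis needed to apply the second lemma. Composing the three isomorphisms yields the desired isomorphism $G\simeq\ti G$.

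There is no real obstacle here; the statement is essentially a bookkeeping corollary of the three lemmas, and the only care required is to track the permutation $\sigma$ correctly so that when the second lemma is applied the inner twists land on the right automorphism in each slot.
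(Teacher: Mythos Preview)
Your proof is correct and follows exactly the approach the paper intends: the proposition is stated immediately after the three lemmas with the remark ``From these obvious isomorphisms we obtain the following results,'' and you have carried out precisely this composition of the three elementary moves with the correct bookkeeping for the permutation $\sigma$.
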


\begin{remark}\label{rem:exotic-isom}
It is tempting to believe that Proposition \ref{prop:conjug} provides a necessary condition for having two fraction groups isomorphic.
So far we have been unable to prove if this is the case or not. 
The existence of exotic $\varphi\in\NCV$ not sending $\Q_2$ onto itself nor onto the other copy of the dyadic rationals inside $\fC$ may imply that there exists some exotic isomorphisms between fraction groups not fulfilling the sufficient condition of the proposition.
For instance, consider a group $\Ga$, a pair of its automorphisms $(\al_0,\al_1)$ and a certain exotic homeomorphism $\varphi\in\NCV.$
Consider some finite words $p,q\in \fs$ with associated automorphisms $(\al_p,\al_q).$
Let $G,\ti G$ be the fraction groups associated to $(\Ga,\al_0,\al_1)$ and $(\Ga,\al_p,\al_q)$ respectively.
Could we have that the formula 
$$av\mapsto a^\varphi.\ad_\varphi(v),\ a\in L\Ga, v\in V$$
defines an isomorphism between $G$ and $\ti G$ even though we are not fulfilling the assumption of the last proposition?
\end{remark}

\subsection{Morphisms into centers}\label{sec:morph-center}

In this section we consider morphisms from a fraction group to the center of another fraction group. 
We show that we can multiply such a morphism together with an isomorphism and obtain a new isomorphism.
We will use this fact in the next section for decomposing isomorphisms between fraction groups.

\tbb{Throughout this subsection} we consider two groups $\Ga,\ti\Ga$, two pairs of automorphisms $(\al_0,\al_1)\in \Aut(\Ga)^2, (\ti\al_0,\ti\al_1)\in \Aut(\ti\Ga)^2$, the associated Jones actions $\pi:V\act L\Ga, \ti\pi:V\act L\ti\Ga$ and the associated fraction groups $G:=L\Ga\rtimes V, \ti G:=L\ti\Ga\rtimes V$.
As before, we write $\tau:V\times\fC\to\Aut(\Ga), (v,x)\mapsto \tau_{v,x}$ for the map satisfying
$$\pi_v(a(vx)) = \tau_{v,x}(a(x)), v\in V, x\in\fC, a\in L\Ga$$
of Proposition \ref{prop:twistedLoop} and similarly we use the symbol $\ti\tau$ for describing $\ti\pi.$
%New notation
We introduce some notations and terminologies that will be used in this section and in the proof of Theorem \ref{theo:support}.

Let $\cI$ be the set of all finite union of sdi. If $I\in\cI$, then we write $I^c:=\fC\setminus I$ for the complement of $I$ inside $\fC$.
Recall that if $\varphi\in\NCV$ and $I\in\cI$, then $\varphi(I)\in\cI.$
If $I\in\cI$ and $g\in \Ga$, then we write $g_I$ for the function supported in $I$ that is constant in $I$ equal to $g$.
If $I\in\cI$ and $H\subset \Ga$ is a subgroup, then we consider the subgroups of $L\Ga$:
$$\tbb{L_I H = L_I(H):=\{ a\in LH:\ \supp(a)\subset I\}} \text{ and } D_IH=D_I(H):=\{ g_I:\ g\in H\}$$
and the subgroup of $\Ga$:
$$H^\al:=\{g\in H:\ g=\al_0(g)=\al_1(g)\}.$$
We say that an element of $\Ga^\al$ is $\al$-invariant.
\tbb{If $\Lambda$ is a group, then we write $e_\Lambda$ for its neutral element.}
We write $Z(\Ga)$ for the center of $\Ga$ and say that an element of $Z(\Ga)$ is central.
We use similar notations and terminologies for subgroups of $\ti\Ga$ and $L\ti\Ga$.
If $I\in\cI$ we write 
$$\Stab_V(I):=\{v\in V:\ v(I)=I\}$$ 
for the stabilizer subgroup of $I$ inside $V$ and 
$$\Fix_V(I):=\{v\in V:\ v(x)=x, \forall x\in I\}.$$ 
Observe that $\Stab_V(I)=\Stab_V(I^c)$ and $\Stab_V(I)$ is the subgroup of $V$ generated by $\Fix_V(I)$ and $\Fix_V(I^c).$
The next lemma describes certain centralizer subgroups.

\begin{lemma}\label{lem:centralizer}
For all $I\in\cI$ we have:
\begin{align*}
\{ a\in L\Ga:\ av = va,\ \forall v\in \Stab_V(I) \} & =D_I (\Ga^\al) \cdot D_{I^c} (\Ga^\al) \text{ and }\\
\{a\in L\Ga:\ av = va, \forall v\in \Fix_V(I) \} & = L_I\Ga\cdot D_{I^c}(\Ga^\al).
\end{align*}
Moreover, the center $Z(G)$ of $G$ is equal to $D_\fC(Z(\Ga)^\al)$: the subgroup of constant maps $a:\fC\to Z(\Ga)^\al.$
\end{lemma}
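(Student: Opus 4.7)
The plan is to translate commutation into a fixed-point condition for the Jones action, prove the second equation as the technical heart, and then derive the first equation and the center by combining it with its symmetric counterpart and with the commutation conditions defining $Z(L\Ga)$. First I note that in $G = L\Ga \rtimes V$, the relation $av = va$ is equivalent to $\pi_v(a) = a$. For $v \in \Stab_V(I)$, since $v$ preserves both $I$ and $I^c$ setwise, the Jones action $\pi_v$ stabilises both subgroups $L_I\Ga$ and $L_{I^c}\Ga$; writing $a = a|_I \cdot a|_{I^c}$ (commuting factors with disjoint supports), the invariance condition splits as $\pi_v(a|_I) = a|_I$ and $\pi_v(a|_{I^c}) = a|_{I^c}$.

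The technical heart is the second equation. The easy inclusion is a direct verification: any $a_1 \in L_I\Ga$ is fixed by $\pi_v$ for $v \in \Fix_V(I)$ since $v$ is locally the identity on $I$ (killing the twist) and satisfies $v(I^c) = I^c$ (keeping $\pi_v(a_1)$ supported in $I$); and any $g_{I^c}$ with $g \in \Ga^\al$ is fixed because each twist $\tau_{v,x}$ lies in $\langle \al_0, \al_1\rangle \subset \Aut(\Ga)$, which fixes $\Ga^\al$ pointwise. The hard inclusion is the key step. Setting $b := a|_{I^c}$, I will refine the defining sdp of $b$ to a sdp $(K_1,\ldots,K_m)$ of $I^c$ at a common depth, with $b|_{K_i} = g_i$. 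Two families of elements of $\Fix_V(I)$ then drive the conclusion. Same-depth permutations of the $K_i$ (which belong to $\Fix_V(I)$ as they act only on $I^c$) will impose the relations $\al_{K_i}(g_i) = \al_{K_j}(g_j)$, producing a common element $h := \al_{K_1}(g_1) \in \Ga$. A Thompson-$F$-style contraction acting within a single $K_i$---for instance sending the sub-sdp $(K_i\cdot 0, K_i\cdot 10, K_i\cdot 11)$ of $K_i$ to $(K_i\cdot 00, K_i\cdot 01, K_i\cdot 1)$---produces the twists $\al_{K_i}^{-1}\al_0^{-1}\al_{K_i}$ on $K_i\cdot 00$ and $\al_{K_i}^{-1}\al_1\al_{K_i}$ on $K_i\cdot 1$, and imposing $\pi_v(b) = b$ on such an element forces $\al_0(h) = h$ and $\al_1(h) = h$, i.e.\ $h \in \Ga^\al$. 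Since $\al_{K_i}$ is a composition of $\al_0$ and $\al_1$, it now fixes $h$, so $g_i = \al_{K_i}^{-1}(h) = h$ for every $i$ and $b = h_{I^c} \in D_{I^c}(\Ga^\al)$.

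For the first equation I will use the fact recalled in the preliminaries that $\Stab_V(I)$ is generated by $\Fix_V(I) \cup \Fix_V(I^c)$, so commuting with $\Stab_V(I)$ is equivalent to commuting with both. Applying the second equation and its $I \leftrightarrow I^c$ analogue and intersecting, the disjoint-support decomposition collapses $L_I\Ga \cap D_I(\Ga^\al)$ to $D_I(\Ga^\al)$ (and likewise on $I^c$), yielding exactly $D_I(\Ga^\al) \cdot D_{I^c}(\Ga^\al)$.

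For the center, if $av \in Z(G)$ then commutation with every $b \in L\Ga$ gives $\pi_v(b) = a^{-1}ba$ for all $b \in L\Ga$; the right-hand side preserves supports pointwise while $\supp(\pi_v(b)) = v(\supp b)$, so testing on $b = g_J$ for a nontrivial $g \in \Ga$ and a sdi $J$ with $v(J)\cap J = \emptyset$ (which exists whenever $v \neq e_V$; the case $\Ga = \{e_\Ga\}$ is immediate) forces $v = e_V$. Commuting with $V$ then places $a$ in $D_\fC(\Ga^\al)$ by the first equation with $I = \emptyset$, and commuting with $L\Ga$ constrains its pointwise values to lie in $Z(\Ga)$; the intersection is precisely $D_\fC(Z(\Ga)^\al)$. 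The main obstacle in the whole plan is the hard inclusion of the second equation, and specifically producing enough ``contracting'' elements of $\Fix_V(I)$ to upgrade $h$ from $\Ga$ to $\Ga^\al$.
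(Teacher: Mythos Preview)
Your proof is correct and follows the paper's overall architecture: prove the second equality first, derive the first from it via the decomposition $\Stab_V(I)=\langle\Fix_V(I),\Fix_V(I^c)\rangle$, and then deduce the center. Where you genuinely diverge is in the hard inclusion of the second equality. The paper argues pointwise and topologically: it picks periodic points $x_p=p^\infty\in I^c$, builds elements of $\Fix_V(I)$ fixing $x_p$ with prescribed $\tau_{v,x_p}$ to force $a(x_p)\in\Ga^\al$, then invokes density of such points and local constancy to get $a(I^c)\subset\Ga^\al$, and finally uses transitivity of $\Fix_V(I)$ on $I^c\cap\Q_2$ to obtain constancy. Your argument is finitary and combinatorial: refine to a uniform-depth sdp of $I^c$, use same-depth transpositions to identify the values $\al_{K_i}(g_i)$ to a common $h$, and a single localized $F$-generator inside one $K_i$ to force $h\in\Ga^\al$; constancy then drops out because $\al_{K_i}$ fixes $\Ga^\al$. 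Your route avoids any density or compactness reasoning and makes the role of the twist completely explicit, at the cost of setting up the uniform-depth partition; the paper's route is more intrinsic to the dynamics on $\fC$ and would generalise more readily to situations where a uniform-depth refinement is unavailable. One small remark: when you invoke ``the first equation with $I=\emptyset$'' for the center, note that $\cI$ in the paper is the set of finite unions of sdi, and the paper takes $I=\fC$ (which is a sdi, with empty associated word) rather than $I=\emptyset$; either choice gives $\Stab_V(I)=V$ and the same conclusion, so this is cosmetic. Your extra step showing that a central element must have trivial $V$-component is a detail the paper leaves implicit.
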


\begin{proof}
Fix $I\in\cI$.
Note that if the second equality is true for $I$ and $I^c$, then 
$$\{a\in L\Ga:\ av = va, \forall v\in \Fix_V(I)\cup \Fix_V(I^c) \} = D_I (\Ga^\al) \cdot D_{I^c} (\Ga^\al).$$
Since $\Stab_V(I)$ is generated by $\Fix_V(I)$ and $\Fix_V(I^c)$ we then obtain the first equality.
It is thus sufficient to prove the second equality.

Consider $a\in L_I\Ga\cdot D_{I^c}(\Ga^\al)$ and $v\in \Fix_V(I)$.
There exists $b\in L_I\Ga, g\in \Ga^\al$ such that $a= b\cdot g_{I^c}.$
Observe that $\pi_v(g_{I^c})(vx) = \tau_{v,x}(g_{I^c}(x))$ for $x\in \fC$.
If $x\in I$, then $\pi_v(g_{I^c})(vx) = \tau_{v,x}(e_\Ga) = e_\Ga.$
If $x\in I^c$, then $\pi_v(g_{I^c})(vx) = \tau_{v,x}(g) = g$ since $\tau_{v,x}$ is in the group generated by $\al_0,\al_1$ and $g$ is $\al$-invariant.
We deduce that $g_{I^c}$ commutes with $v$.
Now, since $v\in \Fix_V(I)$ it acts like the identity on the open set $I$ implying that $\tau_{v,x}=\id_\Ga$ for all $x\in I$.
Therefore, if $x\in I$, then $\pi_v(b)(x) = \pi_v(b)(vx) = \tau_{v,x}(b(x))=b(x)$.
If $x\notin I$, then $\pi_v(b)(vx) = \tau_{v,x}(e_\Ga)=e_\Ga$ since $b$ is supported in $I$.
We deduce that $b$ commutes with $v$ and thus 
$$ \{a\in L\Ga:\ av = va, \forall v\in \Fix_V(I) \} \supset L_I\Ga\cdot D_{I^c}(\Ga^\al).$$

Conversely, consider $a$ which commutes with all $v\in \Fix_V(I).$ 
If $I=\fC$, then $a$ trivially belongs to $L_\fC \Ga= L\Ga$ and we are done.
Hence, assume that $I$ is strictly contained inside $\fC$.
Consider a finite word $p$ in $0,1$ and the periodic sequence $x_p:=p^\infty\in\fC$.
We can find a $p$ such that $x_p$ is in $I^c$.
Indeed, the set of all $x_p$ for $p$ any finite word is dense in the Cantor space $\fC$ and $I^c$ is a nontrivial open subset of $\fC$.
For $d\geq 1$ large enough we have that $a$ is constant on the sdi $J$ defined such that $m_J=p^d$, see Notation \ref{nota:tau}.
Note that $x_p\in J.$
Choose $v\in V$ such that $v\in\Fix_V(I)$, $v$ is adapted to $J$ and $m_{v(J)}=p^{d+1}$.
In particular, $v(x_p)=x_p$ and $v(J)\subset J.$
Moreover, $\al_{v(J)}^{-1}\al_J=\al_p^{-1}.$
We obtain that
$$(vav^{-1})(vx_p) = \pi_v(a)(vx_p)=\tau_{v,x_p}( a(x_p) )=\al_{v(J)}^{-1}\al_J(a(x_p)) = \al_p^{-1}(a(x_p)).$$
Since $a$ commutes with $v$ and $vx_p=x_p$ we obtain that $a(x_p)=\al_p(a(x_p)).$
Consider now another $w\in \Fix_V(I)$ that is adapted to $J$ and satisfying that $m_{w(J)}=p^d \cdot 1\cdot p^d.$
Note that $w(J)\subset J$ since $m_J$ is a prefix of $m_{w(J)}.$
Observe that $\al_{w(J)}^{-1}\al_{J} = \al_p^{-d}\al_1^{-1}.$
We obtain that
$$(waw^{-1})(wx_p) =\pi_w(a)(wx_p)=\tau_{w,x_p}(a(x_p))=\al_p^{-d}\al_1^{-1}(a(x_p)).$$
Since $a$ commutes with $w$ and $a$ is constant on $J$ we obtain that $(waw^{-1})(wx_p)=a(x_p)$ and thus $\al_p^d(a(x_p)) = \al_1^{-1}(a(x_p))$.
Since $a(x_p)$ is $\al_p$-invariant we obtain that $\al_1(a(x_p))=a(x_p).$
A similar argument provides that $\al_0(a(x_p))=a(x_p).$
We have proved that $a(x_p)\in \Ga^\al.$
Since the set of $\{x_q:\ q \text{ a finite word }\}$ is dense in $I^c$ and $a$ is locally constant we obtain that $a(x)\in \Ga^\al$ for all $x\in I^c$.
%Since we have assume that $a$ only takes central values we deduce that $a\in L_I Z(\Ga) \cdot L_{I^c} Z(\Ga)^\al.$
%
%Constant on I
Let us show that $a$ is constant on $I^c$.
Consider $x,y\in I^c\cap \Q_2$. There exists $v\in \Fix_V(I)$ satisfying $vx=y.$
Hence, 
$$a(y)=a(vx)=(vav^{-1})(vx) = \pi_v(a)(vx) = \tau_{v,x}(a(x)) = a(x)$$ since $a(x)\in \Ga^\al.$
Therefore, $a$ is constant on $I^c\cap \Q_2$ and thus constant on $I^c$ since $a$ is locally constant and since $I^c\cap \Q_2\subset I^c$ is a dense subset.
Hence, the restriction of $a$ to $I^c$ belongs to $D_{I^c}(\Ga^\al).$
This implies that $a\in L_I\Ga\cdot D_{I^c}(\Ga^\al).$
%Conclusion
This proves the second equality of the lemma and as explained earlier this implies the first equality of the lemma.

%Center of $G$.
Take $I=\fC$ and observe that $\Stab_V(\fC)=V.$
The first equality of the lemma implies that the commutant of $V$ inside $G$ is equal to $D_\fC(\Ga^\al).$
We deduce that 
$$Z(G)= D_\fC(\Ga^\al) \cap Z(L\Ga) = D_\fC(\Ga^\al) \cap LZ(\Ga) = D_\fC(Z(\Ga)^\al).$$
\end{proof}

We now prove a proposition regarding multiplications of isomorphisms and morphisms valued in centers.

\begin{proposition}\label{prop:center}
If $\beta:G\to\ti G$ is an isomorphism and $\ga:G\to Z(\ti G)$ is a morphism, then $\beta\ga:G\to \ti G, g\mapsto \beta(g)\cdot \ga(g)$ is an isomorphism.
Moreover, $$Z(G)\subset L(\Ga^\al)\rtimes V\subset G'\subset \ker(\ga)$$ where $G'=[G,G]$ is the derived subgroup of $G$ and $\ker(\ga)$ the kernel of $\ga.$
\end{proposition}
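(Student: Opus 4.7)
My plan has two independent components: showing that $\beta\ga$ is a group morphism (immediate from centrality) and establishing the chain of inclusions (where the technical work lies). Once $Z(G) \subset \ker(\ga)$ is obtained from the chain, the bijectivity of $\beta\ga$ follows by standard arguments.

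First I would verify that $\beta\ga: g \mapsto \beta(g)\ga(g)$ is a homomorphism. Expanding $(\beta\ga)(gh) = \beta(g)\beta(h)\ga(g)\ga(h)$ and using that $\ga(g) \in Z(\ti G)$ commutes with $\beta(h)$, this rearranges to $\beta(g)\ga(g)\beta(h)\ga(h) = (\beta\ga)(g)(\beta\ga)(h)$.

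Next I would establish the chain $Z(G) \subset L(\Ga^\al) \rtimes V \subset G' \subset \ker(\ga)$. The first inclusion is immediate from Lemma \ref{lem:centralizer}, which identifies $Z(G) = D_\fC(Z(\Ga)^\al)$; since $Z(\Ga)^\al \subset \Ga^\al$, these constant maps lie in $L(\Ga^\al)$. The last inclusion is automatic since $\ga$ targets the abelian group $Z(\ti G)$ and hence factors through the quotient $G/G'$. The middle inclusion is the key technical step. I would split it into two parts: $V \subset G'$ follows from the simplicity of Thompson's group $V$ (so $V' = V$), and for $L(\Ga^\al) \subset G'$ the crucial observation is that for $g \in \Ga^\al$ and any sdi $I$ the element $g_I$ transforms purely spatially under the Jones action: $\pi_v(g_I) = g_{v(I)}$, since the twisting automorphisms $\tau_{v,x}$ fix $\al$-invariant elements. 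Hence $[v, g_I] = g_{v(I)} g_I^{-1} \in G'$; by combining such commutators, exploiting $V$-transitivity on sdi of fixed depth together with the splitting $g_I = g_{I_0} g_{I_1}$ into sub-sdi, I would realize each generator $g_I$ of $L(\Ga^\al)$ as a product of commutators.

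Finally the isomorphism part follows smoothly. For injectivity, if $(\beta\ga)(g) = e_{\ti G}$ then $\beta(g) = \ga(g)^{-1} \in Z(\ti G)$, forcing $g \in \beta^{-1}(Z(\ti G)) = Z(G) \subset \ker(\ga)$; then $\ga(g) = e_{\ti G}$, so $\beta(g) = e_{\ti G}$, hence $g = e_G$. For surjectivity, given $h \in \ti G$, note $\ga(\beta^{-1}(h))^{-1} \in Z(\ti G) = \beta(Z(G))$, so pick $z \in Z(G)$ with $\beta(z) = \ga(\beta^{-1}(h))^{-1}$; then $g := \beta^{-1}(h) \cdot z$ satisfies $\ga(g) = \ga(\beta^{-1}(h))$ (using $z \in \ker(\ga)$) and $\beta(g) = h\beta(z) = h\ga(\beta^{-1}(h))^{-1}$, giving $(\beta\ga)(g) = h$. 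The main obstacle is the inclusion $L(\Ga^\al) \subset G'$: the $V$-commutator argument readily yields the differences $g_I g_{v(I)}^{-1}$, but expressing a bare $g_I$ as a product of commutators is delicate and requires careful bookkeeping that exploits both the infinite subdivisibility of sdi and the specific structure of the Jones action on $\al$-invariant elements.
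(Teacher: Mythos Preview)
Your proposal is correct and matches the paper's proof almost everywhere: the homomorphism check, the inclusions $Z(G)\subset L(\Ga^\al)\rtimes V$ and $G'\subset\ker(\ga)$, the use of $V=V'$, and the injectivity argument are all identical. Your surjectivity argument is in fact tidier than the paper's (which constructs $g=\beta^{-1}(\ti g\cdot\ga(\beta^{-1}(\ti g^{-1})))$ and then computes).

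The one substantive difference is the step $L(\Ga^\al)\subset G'$, which you correctly flag as the main obstacle. Your plan---use $[v,g_I]=g_{v(I)}g_I^{-1}$ and then combine such relations via transitivity and the splitting $g_I=g_{I_0}g_{I_1}$---does work (in $G/G'$ all $g_J$ for proper sdi $J$ coincide, and the splitting then forces that common class to be trivial), but the paper avoids all of this with a single expansive commutator: for a proper sdi $I$, choose $J\in\cI$ disjoint from $I$ and $v\in V$ with $v(J)=I\cup J$; then since $g\in\Ga^\al$ one has $vg_Jv^{-1}=g_{I\cup J}$, so
\[
[v,g_J]=g_{I\cup J}\,g_J^{-1}=g_I
\]
directly. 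This one-line trick buys you the inclusion with no bookkeeping, whereas your route requires the extra abelianization argument you anticipate.
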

\begin{proof}
Consider $\beta,\ga$ as above.

We start by proving that $L(\Ga^\al)\rtimes V\subset G'$.
\tbb{Note that $V$ is simple and non-abelian} implying that $V=V'$ and thus $V\subset G'$ \cite[Theorem 6.9]{Cannon-Floyd-Parry96}.
Consider $g\in\Ga^\al$ and $I$ a proper sdi of $\fC$.
Choose $v\in V, J\in\cI$ satisfying that $v(J)=I\cup J$ and $I\cap J=\emptyset.$
Observe that 
$$[v,g_J] = vg_Jv^{-1} g_J^{-1} = g_{I\cup J} g_J^{-1} = g_I$$
implying that $g_I\in G'.$
We deduce that $L(\Ga^\al)\subset G'$ since $L(\Ga^\al)$ is generated by the subset $\{g_I:\ I \text{ sdi }, g\in \Ga^\al\}.$
Therefore, $L(\Ga^\al)\rtimes V\subset G'$ since this group is generated by $L(\Ga^\al)$ and $V$.

By Lemma \ref{lem:centralizer}, we have that $Z(G)=D_\fC(Z(\Ga)^\al)$ and thus $Z(G)\subset L(\Ga^\al).$
Since $Z(\ti G)$ is Abelian we have that $G'\subset \ker(\ga)$ implying that $$Z(G)\subset L(\Ga^\al)\rtimes V\subset G'\subset \ker(\ga).$$

Let us show that $\beta\ga$ is an isomorphism.
Since $\ga$ is valued in $Z(\ti G)$ it is obvious that $\beta\ga$ is multiplicative and thus is a group morphism.
Let us prove that $\beta\ga$ is injective.
Consider $g\in G$ so that $\beta\ga(g)=e_{\ti G}$ that is $\beta(g) = \ga(g)^{-1}.$
We deduce that $\beta(g)\in Z(\ti G)$ and thus $g\in Z(G)$ since $\beta$ restricts to an isomorphism from $Z(G)$ onto $Z(\ti G).$
We have proven that $Z(G)\subset \ker(\ga)$.
Therefore, $\ga(g)=e_{\ti G}$ and thus $\beta(g)=\ga(g)^{-1} = e_{\ti G}$. 
This implies that $g=e_G$ since $\beta$ is injective.
This proves that $\beta\ga$ is injective.

Fix $\ti g\in \ti G$ and consider $g:= \beta^{-1}( \ti g \cdot \ga( \beta^{-1} ( \ti g^{-1} ) ) )\in G.$
Observe that
\begin{align*}
\beta\ga(g) & = \beta(g)\cdot \ga(g)\\
& =  \ti g \cdot \ga( \beta^{-1} ( \ti g^{-1} ) ) \cdot \ga( \beta^{-1}(  \ti g \cdot \ga( \beta^{-1} ( \ti g^{-1} ) ) ) )\\
& = \ti g \cdot \ga( \beta^{-1} ( \ti g^{-1} ) ) \cdot \ga( \beta^{-1}(  \ti g )) \cdot \ga ( \beta^{-1}(\ga( \beta^{-1} ( \ti g^{-1} ) )))\\
& = \ti g \cdot \ga ( \beta^{-1}(\ga( \beta^{-1} ( \ti g^{-1} ) ))).\\
\end{align*}
Now, $\ga( \beta^{-1} ( \ti g^{-1} ) )\in Z(\ti G)$ implying that $\beta^{-1}(\ga( \beta^{-1} ( \ti g^{-1} ) ))\in Z(G)$. 
We have proven that $Z(G)\subset \ker(\ga)$ and thus $\ga ( \beta^{-1}(\ga( \beta^{-1} ( \ti g^{-1} ) )))=e_{\ti G}.$
We deduce that $\beta\ga(g)=\ti g$.
Hence, $\beta\ga$ is surjective.
We have proven that $\beta\ga$ is an isomorphism.
\end{proof}

\subsection{Classification of fraction groups}

We now prove the main theorem of this section. 
It is the most technical result of the paper which directly leads to a partial classification of the fraction groups considered and is a key result for describing all automorphisms of untwisted fraction groups.
We keep the same notations introduced in Section \ref{sec:morph-center}.
The difficulty resides in proving that there exists a decomposition such as the one given in Formula \ref{eq:decompo} satisfying (1). 
This will be done via a series of claims.
If we do not require (1), then this formula is a direct consequence of the fact that $\theta(L\Ga)=L\ti\Ga$ \tbb{(see Proposition \ref{prop:chara}).}
Moreover, note that (2) and (3) are easy consequence of (1).

\begin{theorem}\label{theo:support}
Consider two groups with a pair of automorphisms $(\Ga,\al_0,\al_1)$ and $(\ti\Ga,\ti\al_0,\ti\al_1)$ with associated fraction groups $G=L\Ga\rtimes V$ and $\ti G=L\ti\Ga\rtimes V$ respectively.
Assume we have an isomorphism $\theta:G\to\ti G.$
%%%%%%%%%%%%%%%%%%%%%%%%

There exists a morphism $\zeta:G\to Z(\ti G)$, a homeomorphism $\varphi\in \NCV$, an isomorphism $\kappa^0:L\Ga\to L\ti\Ga$ and a cocycle $c:V\to L\ti\Ga$ such that 
\begin{equation}\label{eq:decompo}\theta(av) = \zeta(a)\cdot\kappa^0(a) \cdot c_v \cdot \ad_\varphi(v) \text{ for all } a\in L\Ga, v\in V.\end{equation}

Moreover, we have the following properties:
%%%%%%%%%%%%%%%
\begin{enumerate}
\item The isomorphism $\kappa^0$ is spatial in the following sense:
$$\supp(\kappa^0(a)) = \varphi(\supp(a)) \text{ for all } a\in L\Ga;$$
\item There exists a family of isomorphisms $(\kappa^0_x:\ x\in\fC)$ from $\Ga$ to $\ti\Ga$ satisfying that 
$$\kappa^0(a)(\varphi(x)) = \kappa^0_x(a(x)) \text{ for all } a\in L\Ga, x\in \fC;$$
\item For any $x\in\fC$ and $v\in V$ we have the following equality:
$$ \kappa^0_{vx} \circ \tau_{v,x}= \ad(c_v(\varphi(vx))) \circ \ti\tau_{\ad_\varphi(v),\varphi(x)} \circ \kappa^0_x.$$
\end{enumerate}
\end{theorem}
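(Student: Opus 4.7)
The plan is to construct the decomposition \eqref{eq:decompo} in two stages — a crude preliminary form followed by a centralizer-based refinement — and then to read off properties (1)--(3). For the crude form, Proposition \ref{prop:chara} gives $\theta(L\Ga) = L\ti\Ga$, so $\kappa := \theta|_{L\Ga}$ is an isomorphism $L\Ga \to L\ti\Ga$. Composing $\theta$ with the quotient $\ti G \to V$ kills $L\Ga$ and descends to an automorphism of $V$, which by Proposition \ref{prop:NCV} is $\ad_\varphi$ for a unique $\varphi \in \NCV$. Writing each $\theta(v)$ uniquely as $c_v \cdot \ad_\varphi(v)$ with $c_v \in L\ti\Ga$ produces the cocycle $c$ from $\theta(vw) = \theta(v)\theta(w)$, and $\theta(av) = \kappa(a) c_v \ad_\varphi(v)$.

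For the refinement, fix $a \in L\Ga$, set $S := \supp(a)$, and pick $x \notin S$. I would first establish centrality $\kappa(a)(\varphi(x)) \in Z(\ti\Ga)$: pick a sdi $I \ni x$ with $I \cap S = \emptyset$; any $b \in L_I\Ga$ commutes pointwise with $a$, so $\kappa(a)$ commutes with $\kappa(L_I\Ga)$ in $L\ti\Ga$, and evaluating at $\varphi(x)$ shows $\kappa(a)(\varphi(x))$ centralises $\{\kappa(b)(\varphi(x)) : b \in L_I\Ga\} \leq \ti\Ga$, which one must argue equals $\ti\Ga$. Next, for each $v \in \Fix_V(S)$ the identity $[\theta(a), \theta(v)] = e$ unpacks to $c_v^{-1} \kappa(a) c_v = \pi_{\ad_\varphi(v)}(\kappa(a))$; centrality trivialises the $c_v$-conjugation on $\varphi(S)^c$, while $\ad_\varphi(v) \in \Fix_V(\varphi(S))$ fixes $\varphi(S)$ pointwise and with trivial $\ti\tau$ there, so globally $\pi_{\ad_\varphi(v)}(\kappa(a)) = \kappa(a)$. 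Hence $\kappa(a)$ centralises $\Fix_V(\varphi(S))$ in $\ti G$, and Lemma \ref{lem:centralizer} forces $\kappa(a)$ to be constant on $\varphi(S)^c$ with value in $\ti\Ga^{\ti\al}$; together with centrality, this value lies in $Z(\ti\Ga)^{\ti\al}$. Define $\zeta(a) \in Z(\ti G) = D_\fC(Z(\ti\Ga)^{\ti\al})$ to be this constant map and $\kappa^0(a) := \kappa(a)\zeta(a)^{-1}$; multiplicativity of $\zeta$ on $L\Ga$ follows because the constant value is stable under the Jones action (using $\zeta(a) \in Z(\ti\Ga)^{\ti\al}$ absorbs both $c_v$-conjugation and $\ti\tau$-twists), and extending $\zeta$ by $\zeta|_V \equiv e$ (automatic since $V \subset G'$, cf.~Proposition \ref{prop:center}) produces the morphism $\zeta : G \to Z(\ti G)$. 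Proposition \ref{prop:center} then ensures $\theta \cdot \zeta^{-1}$ is an isomorphism of $G \to \ti G$, so in particular $\kappa^0$ is an isomorphism of $L\Ga \to L\ti\Ga$, and \eqref{eq:decompo} follows.

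The remaining properties are direct. Property (1) is built in, with the reverse inclusion by symmetry on $\kappa^{-1}$. For property (2), the support property forces $\kappa^0(h_I) \in L_{\varphi(I)}\ti\Ga$ for any sdi $I \ni x$, so on nested $I \ni x$ the value $\kappa^0_x(h) := \kappa^0(h_I)(\varphi(x))$ is well-defined independent of $I$ and inherits the isomorphism structure from $\kappa^0$. For property (3), substitute \eqref{eq:decompo} into the identity $\theta(\pi_v(a) \cdot v) = \theta(v \cdot a)$, evaluate both sides at $\varphi(vx)$, and unfold via the $\tau$- and $\ti\tau$-formulas of Proposition \ref{prop:twistedLoop}. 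The main obstacle is the centrality step: verifying that $\{\kappa(b)(\varphi(x)) : b \in L_I\Ga\}$ exhausts $\ti\Ga$ is not a formal consequence of $\kappa$ being an isomorphism and requires a careful local-surjectivity argument extracted from the combined $L\Ga$- and $V$-commutation structure, which is where the bulk of the technical work lies.
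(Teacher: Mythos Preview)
Your overall architecture matches the paper's: obtain the crude decomposition $\theta(av)=\kappa(a)c_v\ad_\varphi(v)$ from Propositions~\ref{prop:chara} and~\ref{prop:NCV}, show that $\kappa(a)$ takes a constant value in $Z(\ti\Ga)^{\ti\al}$ outside $\varphi(\supp(a))$, strip off that constant to define $\zeta$ and $\kappa^0$, and then read off (1)--(3). The $\ti\al$-invariance step via Lemma~\ref{lem:centralizer} and the derivation of (2)--(3) are essentially as in the paper.

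The substantive gap is exactly where you flag it: centrality. Your plan is to fix $x\notin\supp(a)$, pick a sdi $I\ni x$ disjoint from $\supp(a)$, and argue that $\{\kappa(b)(\varphi(x)):b\in L_I\Ga\}=\ti\Ga$. This local surjectivity is not obvious a priori: all you know is that $\kappa$ is globally surjective onto $L\ti\Ga$, and there is no evident mechanism forcing evaluation at a single point $\varphi(x)$ of $\kappa(L_I\Ga)$ to hit every element of $\ti\Ga$ before the structure theorem is available. Indeed, once (1)--(2) are proved one sees $\kappa(b)(\varphi(x))=\kappa^0_x(b(x))\cdot\zeta(b)$ with $\zeta(b)$ central and depending on all of $b$, so even a posteriori the surjectivity is not a one-line consequence. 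The paper sidesteps this entirely. It introduces $M_{I,x}:=\{\kappa(a)(\varphi(x)):a\in L_I\Ga\}$ for $x\notin I$, proves (Claim~2) that $M_{I,x}$ is independent of $I$ using a $v\in V$ that moves $I$ to $J$ while fixing $x$ with slope~$1$, and then (Claim~3) uses this independence together with disjoint supports: given $g,h\in M_{I,x}$, realise them via $a\in L_{I_0}\Ga$, $b\in L_{I_1}\Ga$ with $I_0,I_1\subset I$ disjoint, so $a,b$ commute and hence $g,h$ commute; then for arbitrary $h\in\ti\Ga$ use global surjectivity of $\kappa$ (take $\kappa(b)=h_\fC$), decompose $b=b_I\cdot b_{I^c}$, and combine abelianness of $M_{I,x}$ with disjointness of $\supp(a)$ and $\supp(b_{I^c})$. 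No local surjectivity is needed.

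A secondary issue: your definition of $\zeta(a)$ as ``the constant value of $\kappa(a)$ on $\varphi(\supp(a))^c$'' is undefined when $\supp(a)=\fC$. The paper handles this (Claim~5) by first defining $\zeta^I$ on each $L_I\Ga$ with $I$ proper, checking compatibility on overlaps, and then gluing via a decomposition $a=a_0\cdot a_1$ over the two halves of $\fC$; the $V$-invariance needed to extend $\zeta$ to $G$ is checked directly from the cocycle identity rather than appealed to. Finally, for the reverse inclusion in (1), ``by symmetry on $\kappa^{-1}$'' is the right idea but must be applied to $(\theta^0)^{-1}$ with $\theta^0:=\theta\cdot\zeta^{-1}$ (not to $\theta^{-1}$), and one uses that $\theta^0$ sends $Z(G)$ onto $Z(\ti G)$ to rule out a strict inclusion.
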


\begin{proof}%[Proof of the proposition]
Consider $G,\Ga,\al_0,\al_1, \ti G, \ti \Ga, \ti\al_0,\ti\al_1, \theta$ as above and the notations introduced in Section \ref{sec:morph-center}.
%We denote by $\ti\pi, \ti\tau$ the maps playing the roles of $\pi,\tau$ but for $\ti G.$
Recall that by definition a sdi is always nonempty. 
For convenience we will further assume in this proof that a sdi is always different to $\fC$ and any element of $\cI$ is different from $\fC$ and $\emptyset$.

%Decomposition
Proof of the decomposition \eqref{eq:decompo} satisfying Property (1):

Proposition \ref{prop:chara} implies that $\theta(L\Ga)=L\ti\Ga$ and thus there exists an isomorphism $\kappa\in\Isom(L\Ga,L\ti\Ga)$, $\phi=\ad_\varphi\in\Aut(V)$ (by Proposition \ref{prop:NCV}) and $c:V\to L\ti\Ga$ such that 
$$\theta(av) = \kappa(a) \cdot c_v \cdot \phi_v \text{ for all } a\in L\Ga, v\in V.$$
The notation $\phi=\ad_\varphi$ means that $\varphi\in \NCV$ is the unique homeomorphism of the Cantor space $\fC$ satisfying that $\phi_v=\varphi v\varphi^{-1}$ for all $v\in V$.
We want to show that if $a\in L\Ga$, then there exists $b\in L\ti\Ga$ and $h\in Z(\ti\Ga)^{\ti\al}$ such that $\kappa(a)= b\cdot h_\fC$ where $\supp(b)=\varphi(\supp(a))$ and $h_\fC$ is the constant function equal to $h$ everywhere.
From this we will be able to further decompose $\theta$ by writing $\kappa$ as $\kappa^0\zeta$ for suitable $\kappa^0$ and $\zeta.$

%A useful subgroup
We now consider a certain subgroup of $\ti\Ga$ which measures the default of $\theta$ to be spatial.
Define 
$$M_{I,x}:= \{ \kappa(a)(\varphi(x)):\ a\in L_I\Ga\} \text{ for } I\in\cI, x\in I^c.$$
Our aim is to show that $M_{I,x}\subset Z(\ti\Ga)^{\ti\al}$ for all choice of $I\in\cI, x\in I^c.$ 
We start by an easy fact.

{\bf Claim 1: For all $I\in\cI, x\in I^c$ the subset $M_{I,x}\subset \ti\Ga$ is a normal subgroup.}

Fix $I\in\cI$ and $x\in I^c.$
Consider the map $\kappa_x:L\Ga\to \ti\Ga, a\mapsto \kappa(a)(\varphi(x))$.
It is a group morphism and $\kappa_x(L_I\Ga)=M_{I,x}$ implying that $M_{I,x}$ is a subgroup of $\ti\Ga.$
If $h\in\ti\Ga$, then there exists $a\in L\Ga$ so that $\kappa(a)=h_\fC$.
In particular,
$$ h M_{I,x} h^{-1} = \kappa_x( a L_I\Ga a^{-1}) = \kappa_x(L_I\Ga) = M_{I,x}$$
since $L_I\Ga\subset L\Ga$ is a normal subgroup.
This proves the claim.

{\bf Claim 2: For all $I,J\in\cI$ and $x \notin I\cup J$, we have $M_{I,x}=M_{J,x}$.}

Consider $I,J\in\cI$ and $x\in \fC$ so that $x\notin I\cup J$.
There exists $v\in V$ so that $v$ is adapted to $I$, $v(I)=J$ and $v(x)=x, v'(x)=1.$ This implies that $\phi_v(\varphi(x))=\varphi(x), \phi_v'(\varphi(x))=1$ and thus $\ti\tau_{\phi_v,\varphi(x)}=\id_{\ti\Ga}.$
Now observe that
$$M_{J,x} = \kappa_x( L_J\Ga) = \kappa_x( v L_I\Ga v^{-1}).$$

If $a\in L_I\Ga$, then
\begin{align*}
\kappa_x(v a v^{-1}) & = \kappa(vav^{-1})(\varphi(x)) = \kappa(vav^{-1})(\varphi(vx))\\
& = [\theta(v) \kappa(a) \theta(v)^{-1} ] (\varphi(vx)) = [c_v \phi_v \kappa(a) \phi_v^{-1} c_v^{-1}](\varphi(vx))\\
& = \ad(c_v(\varphi(vx)))\circ \ti\tau_{\phi_v,\varphi(x)}(\kappa(a)(\varphi(x)))\\
& = \ad(c_v(\varphi(vx)))( \kappa(a)(\varphi(x))) \in \ad(c_v(\varphi(vx)))( M_{I,x}) = M_{I,x}
\end{align*}
since $M_{I,x}\subset \ti\Ga$ is a normal subgroup.
We deduce that $M_{J,x}\subset M_{I,x}$ and by reversing the roles of $I$ and $J$ we deduce that $M_{J,x}\supset M_{I,x}$ implying the equality $M_{J,x}=M_{I,x}$.

{\bf Claim 3: We have that $M_{I,x}$ is a subgroup of $Z(\ti\Ga)$ for all $I\in\cI, x\in I^c.$}

Fix $I\in\cI$ and $x\notin I$.
We start by proving that $M_{I,x}$ is an Abelian group.
Consider $g,h\in M_{I,x}$ and two disjoint sdi $I_0,I_1$ that are contained in $I$. In particular, $x\notin I_0$ and $x\notin I_1.$
Claim 2 implies that there exists $a\in L_{I_0}\Ga,b\in L_{I_1}\Ga$ so that $g=\kappa(a)(\varphi(x)), h=\kappa(b)(\varphi(x)).$
\tbb{Since $a$ and $b$ have disjoint support they mutually commute and thus so do $g$ and $h$.}
We have proven that $M_{I,x}$ is Abelian.

Consider $g\in M_{I,x}$ and $h\in \ti\Ga.$
There exists $a\in L_I\Ga$ and $b\in L\Ga$ so that $g=\kappa(a)(\varphi(x)), h = \kappa(b)(\varphi(x)).$
Decompose $b$ as $b=b_I\cdot b_{I^c}$ where $b_I\in L_I\Ga, b_{I^c}\in L_{I^c}\Ga.$
We have seen that $\kappa(a)(\varphi(x))$ and $\kappa(b_I)(\varphi(x))$ mutually commute.
Now, $ab_I$ commutes with $b_{I^c}$ since they have disjoint support and thus so do $\kappa(ab_I)(\varphi(x))$ and $\kappa(b_{I^c})(\varphi(x))$.
We deduce that $\kappa(a)(\varphi(x))$ and $\kappa(b)(\varphi(x))$ mutually commute and thus $gh=hg.$
This proves the claim.

{\bf Claim 4: We have that $\kappa(L_I\Ga)\subset L_{\varphi(I)}\ti\Ga \cdot D_{\fC}(Z(\ti\Ga)^{\ti\al})$ for all $I\in\cI.$ 
In particular, $M_{I,x}$ is a subgroup of $Z(\ti\Ga)^{\ti\al}$ for all $I\in\cI, x\notin I$.}

Fix $I\in\cI, a\in L_I\Ga$ and $v\in \Fix_V(I).$
Note that $a$ commutes with $v$ by Lemma \ref{lem:centralizer}.
Let us show that $\kappa(a)$ commutes with $\Fix_V(\varphi(I)).$
Consider $x\in\fC$.
We have that:
\begin{align*}
\kappa(a)(\varphi(vx)) & = \kappa(\pi_v(a))(\varphi(vx))\\
& = [\theta(v) \kappa(a) \theta(v)^{-1} ] (\varphi(vx) ) \\
& = \ad(c_v(\varphi(vx))) \circ \ti\tau_{\phi_v,\varphi(x)} (\kappa(a)(\varphi(x))).
\end{align*}
%We deduce that 
%$$\kappa(a)(\varphi(vx)) = \ad(c_v(\varphi(vx))) \circ \ti\tau_{\phi_v,\varphi(x)} (\kappa(a)(\varphi(x))).$$
Now, if $x\in I$, then $\ti\tau_{\phi_v,\varphi(x)}=\id_{\ti\Ga}$ since $\phi_v$ acts like the identity on a neighborhood of $\varphi(x).$
Hence, $\kappa(a)(\varphi(vx)) =\ad(c_v(\varphi(vx)))( \kappa(a)(\varphi(x)))$ and since $vx=x$ we deduce that $\kappa(a)(\varphi(vx))$ commutes with $c_v(\varphi(vx))$ for all $x\in I$.
If $x\notin I$, then $\kappa(a)(\varphi(vx))$ is central by Claim 3 implying that $c_v(\varphi(vx))$ commutes with it. 
\tbb{This proves that $c_v$ and $\kappa(a)$ commute.}
Since $av=va$ we have that $\theta(av)=\theta(va)$ and thus $\kappa(a)$ and $\theta(v)$ commute.
\tbb{Now, $\theta(v)=c_v\cdot \phi_v$ and $\kappa(a)$ commute} with both $\theta(v)$ and $c_v$.
Therefore, $\kappa(a)$ commutes with $\phi_v$.
We have proven that $\kappa(a)$ commutes with $\phi(\Fix_V(I))$ implying that it commutes with $\Fix_V(\varphi(I))$ since $\Fix_V(\varphi(I))=\phi(\Fix_V(I)).$

By applying Lemma \ref{lem:centralizer} to $\Fix_V(\varphi(I))$ we deduce that $\kappa(a)\in L_{\varphi(I)}\ti\Ga\cdot D_{\varphi(I)^c} \ti\Ga^{\ti\al}$ that is $\kappa$ is constant on the complement of $\varphi(I)$ and takes a value in $\ti\Ga^{\ti\al}$.
By Claim 3 we know that $\kappa(a)(\varphi(x))$ is in the center of $\ti\Ga$ if $x\notin I$.
We deduce that $\kappa(a)\in L_{\varphi(I)}\ti\Ga\cdot D_{\varphi(I)^c} Z(\ti\Ga)^{\ti\al}.$
Since $L_{\varphi(I)}\ti\Ga\cdot D_{\varphi(I)^c} Z(\ti\Ga)^{\ti\al}=L_{\varphi(I)}\ti\Ga\cdot D_{\fC} Z(\ti\Ga)^{\ti\al}$ the claim is proven.

By Lemma \ref{lem:centralizer} we have $Z(\ti G) = D_{\fC} Z(\ti\Ga)^{\ti\al}$ and thus we have proven:
$$\kappa(L_I\Ga)) \subset L_{\varphi(I)}\ti\Ga\cdot Z(\ti G) \text{ for all } I\in\cI.$$
We will now define the desirable morphism $\zeta:G\to Z(\ti G)$.

{\bf Claim 5: There exists a unique group morphism 
$$\zeta:G\to Z(\ti G)$$
satisfying that for all $a\in L\Ga$ we have a decomposition
$\kappa(a) = b\cdot \zeta(a)$ where $b\in L\ti\Ga, \supp(b)\subset \varphi(\supp(a))$.}

%Definition of $\zeta^I$
Fix $I\in\cI$.
Claim 4 proved that $\kappa(L_I\Ga)\subset L_{\varphi(I)}\ti\Ga\cdot Z(\ti G).$
By convention $I\neq\fC$ implying that $L_{\varphi(I)}\ti\Ga\cap Z(\ti G)=\{e_{\ti G} \}$ since all elements of $Z(\ti G) = D_{\fC}Z(\ti\Ga)^{\ti\al}$ have either full or trivial support and no elements of $L_{\varphi(I)}\ti\Ga$ have full support.
Moreover, these two subgroups mutually commute and thus $L_{\varphi(I)}\ti\Ga\cdot Z(\ti G)$ is isomorphic to the direct product of groups $L_{\varphi(I)}\ti\Ga\oplus Z(\ti G).$
Therefore, there exists unique morphisms $\kappa^I:L_I\Ga\to L_{\varphi(I)}\ti\Ga, \zeta^I:L_I\Ga\to Z(\ti G)$ satisfying that 
$$\kappa(a) = \kappa^I(a)\cdot \zeta^I(a) \text{ for all } a\in L_I\Ga.$$

%Independence of $I$
Consider another $J\in \cI$ and assume that $I\cap J\neq\emptyset.$
If $a\in L_{I\cap J}\Ga$, then 
$$\kappa(a) = \kappa^I(a)\cdot \zeta^I(a) = \kappa^J(a)\cdot \zeta^J(a).$$
Evaluating this equality at $\varphi(x)$ with $x\notin I\cap J$ we deduce that $\zeta^I(a)(\varphi(x))=\zeta^J(a)(\varphi(x))$.
Since $\zeta^{I}(a)$ and $\zeta^J(a)$ are constant functions we deduce that $\zeta^{I}(a)=\zeta^J(a).$

%$\zeta^\fC$
We now define $\zeta^\fC$.
Let $I_0,I_1$ be the first and second of half of $\fC$ respectively.
For all $a\in L\Ga$ we can can decompose uniquely $a$ as $a=a_0\cdot a_1$ where $a_k\in L_{I_k}\Ga$ for $k=0,1.$
We put:
$$\zeta^\fC(a):= \zeta^{I_0}(a_0)\cdot \zeta^{I_1}(a_1) \text{ for all } a\in L\Ga.$$
\tbb{By following a similar argument to that above} and considering $I\cap I_0$ and $I\cap I_1$ we deduce that $\zeta^\fC(a)=\zeta^I(a)$ for all $I\in \cI$ and $a\in L_I\Ga.$
Since each $\zeta^I, I\in\cI$ are morphisms valued in an Abelian group we deduce that $\zeta^\fC$ is a morphism.

Consider $v\in V$ adapted to $I$ so that $v(I)=K$ for a certain $K\in\cI$.
If $a\in L_I\Ga$, then $vav^{-1}\in L_K\Ga$ and thus:
$$\kappa(vav^{-1}) =c_v \phi_v \kappa(a) \phi_v^{-1} c_v^{-1} = c_v \phi_v \kappa^I(a)\cdot \zeta^I(a) \phi_v^{-1} c_v^{-1} = c_v \phi_v \kappa^I(a) \phi_v^{-1} c_v^{-1}\cdot \zeta^I(a).$$
Since $c_v \phi_v \kappa^I(a) \phi_v^{-1} c_v^{-1}$ is supported in $\varphi(K)$ we deduce that $\zeta^I(a) = \zeta^K(vav^{-1}).$

We obtain that $\zeta^\fC(a) = \zeta^\fC(vav^{-1}).$
This allows us to extend the morphism $\zeta^\fC:L\Ga\to Z(\ti G)$ into a morphism:
$$\zeta:G\to Z(\ti G)$$
so that $\zeta(v)=e_{\ti G}$ for all $v\in V$ and $\zeta(a)=\zeta^\fC(a)$ for all $a\in L\Ga.$

By construction the morphism $\zeta$ satisfies the properties of the claim.

{\bf End of the proof of Formula \eqref{eq:decompo} satisfying Property (1):}

\tbb{Define $\zeta^{\dag}$ to be the morphism:
$$\zeta^{\dag}: G\to Z(\ti G), g\mapsto \zeta(g)^{-1}.$$}
This is indeed a morphism since $Z(\ti G)$ is Abelian and $\zeta$ is a morphism.
\tbb{Consider $\theta^0:= \theta\zeta^{\dag}$} which is an isomorphism by Proposition \ref{prop:center}.
Note that $\theta=\theta^0\zeta$ and $\theta^0(av) = \kappa^0(a)\cdot c_v\cdot \phi_v$ for all $a\in L\Ga, v\in V$ where $\kappa^0$ is of the form:
$$\kappa^0:L\Ga\to L\ti \Ga, a\mapsto \kappa(a)\cdot \zeta(a)^{-1}.$$
We have that $\kappa^0$ is an isomorphism since $\theta^0$ is and moreover $\supp(\kappa^0(a))\subset \varphi(\supp(a))$ for all $a\in L\Ga$ by construction of $\zeta$. 

Let us show that $\supp(\kappa^0(a))=\varphi(\supp(a))$ for all $a\in L\Ga.$
Assume this is not the case.
There exists a nonempty finite union of sdi $I$ (possibly equal to $\fC$) and $a\in L\Ga$ with support $I$ so that $\supp(\kappa^0(a))=\varphi(J)$ where $J\subset I$ is strictly contained inside $I$.
Consider now the inverse $(\theta^0)^{-1}$ of $\theta^0$ and $a=(\theta^0)^{-1}(\kappa^0(a)).$
Applying the same reasoning that we have done for $\theta$ to $(\theta^0)^{-1}$ we obtain that 
$$a=(\theta^0)^{-1}(\kappa^0(a)) = b\cdot h_{\fC}$$
where $b$ is supported in $J$ and $h\in Z(\Ga)^\al.$
Since $J$ is a proper subset of $I$ and $a$ has support $I$ we must have $h\neq e_\Ga.$
We now reapply $\kappa^0$ and obtain:
$$\kappa^0(a) = \kappa^0(b)\cdot \kappa^0(h_\fC).$$
Since $b$ is supported in $J$ we have that $\supp(\kappa^0(b))\subset \varphi(J)$. 
Now, $h_\fC\in Z(G).$
Since $\theta^0$ is an isomorphism it maps $Z(G)$ onto $Z(\ti G)$. 
Moreover, $Z(\ti G) = D_\fC(Z(\ti\Ga)^{\ti\al})$. 
Therefore, $\theta^0(h_\fC)=\kappa^0(h_\fC) = k_\fC$ for a certain $k\in Z(\ti\Ga)^{\ti\al}$. 
Since $h$ is nontrivial so is $k$ implying that $\varphi(J)^c$ is contained in the support of $\kappa^0(a)$.
This contradicts the assumption $\supp(\kappa^0(a))=\varphi(J)$.
We have proven that $\supp(\kappa^0(a))=\varphi(\supp(a))$ for all $a\in L\Ga.$
This concludes the proof of the first part of the theorem.

Up to considering $\theta^0$ rather than $\theta$ we now assume that $\zeta$ is trivial and thus $\kappa^0=\kappa$. This will make our notations slightly lighter.

Proof of (2):

Let us prove that $\kappa$ can be decomposed as a product of isomorphisms $\prod_{x\in\fC}\kappa_x.$
Define the map 
$$\kappa_{x,I}:\Ga\to\ti\Ga, g\mapsto \kappa(g_I)(\varphi(x)) \text{ where } x\in\fC, I\in\cI.$$
By the first part of the proof we know that, if $x\notin I$, then $\kappa_{x,I}(g)=e_{\ti\Ga}$ for all $g\in \Ga$ since $\supp(\kappa(g_I))\subset\varphi(I).$
This implies that if $x\in I$, then 
$$\kappa_{x,\fC}(g) = \kappa(g_\fC)(\varphi(x)) = \kappa(g_I\cdot g_{\fC\setminus I})(\varphi(x)) = \kappa_{x,I}(g)\cdot \kappa_{x,\fC\setminus I}(g) = \kappa_{x,I}(g)$$
for all $g\in \Ga$.
We deduce that $\kappa_{x,\fC}=\kappa_{x,I}$ for all $x\in \fC$ and $I\in\cI$ satisfying $x\in I$.
We now write $\kappa_x:=\kappa_{x,\fC}$ for $x\in\fC$ which is a group morphism since $\kappa$ is.
Consider $a\in L\Ga$ and note that since $a$ is locally constant we can find a sdp $(I_1,\cdots,I_n)$ and some elements $g^1,\cdots,g^n\in\Ga$ satisfying that $a= g^1_{I_1}\cdots g^n_{I_n}$.
Given $x\in \fC$, there exists a unique $1\leq j\leq n$ such that $x\in I_j$ and thus:
$$\kappa(a)(\varphi(x)) = \kappa_x(g^1)\cdots \kappa_x(g^n) = \kappa_x(g^j) = \kappa_x(a).$$
Consider $x\in\fC$ and let us show that $\kappa_x$ is an isomorphism from $\Ga$ to $\ti\Ga.$
If $\ti g\in \ti\Ga$, then since $\kappa$ is surjective there exists $a\in L\Ga$ such that $\kappa(a)=\ti g_\fC$ and in particular $\kappa(a)(\varphi(x))=\kappa_x(a(x)) = \ti g$ implying that $\kappa_x$ is surjective.
If $\kappa_x$ is not injective, then there exists $g\in \Ga,g\neq e_\Ga$ such that $\kappa_x(g)=e_{\ti\Ga}.$
Since $\kappa(g_\fC)$ is locally constant we can find a sdi $I$ such that $\kappa_y(g)=e_{\ti\Ga}$ for all $y\in I$ implying that $\kappa(g_I)=e_{L\ti\Ga}$, a contradiction since $\kappa$ is injective.

Proof of (3):

Consider $x\in\fC, v\in V, g\in\Ga$ and a sdi $I$ containing $x$ so that $v$ is adapted to $I$.
Recall that $vg_Iv^{-1} = \pi_v(g_I)= [\al_{v(I)}^{-1} \al_I(g)]_{v(I)}$ the function supported in $v(I)$ and equal to $\al_{v(I)}^{-1} \al_I(g)$ on $v(I)$.
Observe that
$$
\kappa([\al_{v(I)}^{-1} \al_I(g)]_{v(I)} ) = \theta(vg_Iv^{-1})  = \theta(v)\theta(g_I)\theta(v)^{-1} = c_v\cdot \phi_v \cdot \kappa(g_I) \cdot \phi_v^{-1}\cdot c_v^{-1}.$$
Evaluated at $\varphi(vx)$ we obtain that 
$$\kappa([\al_{v(I)}^{-1} \al_I(g)]_{v(I)})(\varphi(vx)) = \ad(c_v(\varphi(vx)))[ \ti\tau_{\phi_v,\varphi(x)}(\kappa(g_I)(\varphi(x))) ].$$
Using that $\al_{v(I)}^{-1}\al_I=\tau_{v,x}$ and the decomposition $\prod_{y\in\fC}\kappa_y$ of $\kappa$ we obtain the equality:
$$\kappa_{vx}\circ\tau_{v,x} = \ad(c_v(\varphi(vx))) \circ \ti\tau_{\phi_v,\varphi(x)} \circ \kappa_x.$$
\end{proof}

\begin{remark}\label{rem:support}
Consider an isomorphism $\theta:G\to \ti G$.
We have proven in Proposition \ref{prop:center} that $L(\Ga^\al)\rtimes V\subset G'$ and thus belongs to the kernel of the map $\zeta:G\to Z(\ti G)$ of the theorem.
In particular, in the untwisted case we have by definition that $\al_0=\al_1=\id_\Ga$ implying that $\Ga=\Ga^\al$ and thus $\zeta$ is trivial. 
Hence, we automatically have that $\supp(\theta(a))=\varphi(\supp(a))$ for all $a\in L\Ga$ when $G$ is an untwisted fraction group.
This will be particularly relevant in the last section of this article where we will be considering $\Aut(G)$ for $G$ an untwisted fraction group.
\\
Interestingly, we have the same result in the opposite case: when there are few $\al$-invariant elements.
Indeed, assume that $Z(\Ga)^\al$ is trivial. Since this is isomorphic to $Z(G)$ by Lemma \ref{lem:centralizer} we deduce that $Z(G)$ is trivial and thus so is $Z(\ti G)$ since $G$ is isomorphic to $\ti G$.
Therefore, any morphism $\zeta:G\to Z(\ti G)$ is trivial when there are no central $\al$-invariant elements of $\Ga$. 
Hence, Theorem \ref{theo:support} implies that all isomorphisms from $G$ to $\ti G$ are spatial.
\end{remark}

\begin{example}
Here is an example of a triple $(\Ga,\al_0,\al_1)$ with associated fraction group $G:=L\Ga\rtimes V$ and a nontrivial morphism $\zeta: G\to Z(G).$ Using $\zeta$ we will then construct a nontrivial non-spatial automorphism of $G$.
\\
Consider the additive group $\Ga:=\Z[1/2]\times\Z[1/2]$ the automorphisms $\al=\al_0=\al_1$ so that 
$$\al(t,r):= (t, \frac{r}{2}), \ t,r\in \Z[1/2].$$
If $I$ is a sdi, then we write $m_I$ for its associated finite word, $|m_I|$ for the number of letters in this word and $\Leb(I)$ for the Lebesgue measure of $I$.
Note that $\Leb(I)= 2^{-|m_I|}$ for all sdi $I$.
Given $t,r\in \Z[1/2]$ and a sdi $I$ consider$(t,r)_I \in L\Ga$ the element supported in $I$ taking the value $(t,r)$ in $I$ and define 
$$\zeta( (t,r)_I):= \Leb(I) \cdot r.$$
This extends uniquely into a group morphism from $L\Ga$ to $\Z[1/2]$.
The formula is the following: 
consider $a\in L\Ga$ so that $a= (t_1,r_1)_{I_1}\cdots (t_n,r_n)_{I_n}$ for a sdp $(I_1,\cdots,I_n)$ and some $t_j,r_j\in \Z[1/2], 1\leq j\leq n.$ 
We have
$$\zeta( a) =\sum_{k=1}^n Leb(I_k) \cdot r_k.$$
Observe that $\zeta$ is $V$-invariant. Indeed consider $a\in L\Ga$ and $v\in V$.
Up to decomposing $a$ over a partition adapted to $v$ and $a$ we can assume that $a=(t,r)_I$ where $t,r\in\Z[1/2]$ and $I$ is a sdi adapted to $v.$
We have that 
$$v(t,r)_I v^{-1} = [\al_{v(I)}^{-1} \al_I (t,r)]_{v(I)} = [\al^{|m_I| - |m_{v(I)}|} (t,r)]_{v(I)} = [2^{|m_{v(I)}|-|m_I|} \cdot (t,r)]_{v(I)}.$$
We deduce the following:
\begin{align*}
\zeta( v (t,r)_I v^{-1} ) & = \zeta( [2^{|m_{v(I)}|-|m_I|}\cdot  (t,r)]_{v(I)} ) =\Leb(v(I)) \cdot 2^{|m_{v(I)}| - |m_I|} \cdot  r \\
& = \Leb(I)\cdot  r = \zeta( (t,r)_I)
\end{align*}
proving that $\zeta$ is $V$-invariant.
Therefore, $\zeta$ extends into a group morphism 
$$\zeta:G\to \Z[1/2], av\mapsto \zeta(a).$$
By identifying $\Z[1/2]$ with $Z(G)$ we obtain a nontrivial morphism
$$\zeta:G\to Z(G).$$
Therefore, $G$ admits automorphisms that are not spatial such as the following:
$$G\ni g\mapsto g\cdot \zeta(g) \in G.$$
\end{example}

\begin{corollary}\label{cor:isom}
Consider two groups $\Ga,\ti\Ga$ and two pairs of automorphisms $\al_0,\al_1\in\Aut(\Ga), \ti\al_0,\ti\al_1\in\Aut(\ti\Ga).$
Denote by $G=L\Ga\rtimes V$ and $\ti G=L\ti\Ga\rtimes V$ the fraction groups associated to $(\Ga,\al_0,\al_1)$ and $(\ti\Ga,\ti\al_0,\ti\al_1)$ respectively.
The following assertions are true:
\begin{enumerate}
\item If $G\simeq \ti G$, then $\Ga\simeq\ti\Ga.$ 
\item Assume that $\ti\al_0,\ti\al_1$ are inner automorphisms. We have that $G\simeq\ti G$ if and only if $\Ga\simeq\ti\Ga$ and $\al_0,\al_1$ are inner automorphisms.
\end{enumerate}
\end{corollary}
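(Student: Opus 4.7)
My plan is to derive both assertions from Theorem~\ref{theo:support}, combined with Lemma~\ref{lem:isomaut} and Proposition~\ref{prop:conjug} for the sufficient direction of (2).

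For (1), I will apply Theorem~\ref{theo:support} to any isomorphism $\theta : G \to \ti G$ and invoke Property~(2): it provides a family $(\kappa^0_x)_{x \in \fC}$ of isomorphisms $\Ga \to \ti\Ga$. Evaluating at a single $x$ produces the desired isomorphism $\Ga \simeq \ti\Ga$.

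For the sufficient direction of (2), I will assume $\Ga \simeq \ti\Ga$ and that $\al_0, \al_1, \ti\al_0, \ti\al_1$ are all inner. Applying Lemma~\ref{lem:isomaut} on each side reduces the problem to showing that the untwisted fraction groups built from $(\Ga, \id_\Ga, \id_\Ga)$ and $(\ti\Ga, \id_{\ti\Ga}, \id_{\ti\Ga})$ are isomorphic, which is an immediate application of Proposition~\ref{prop:conjug} with $\sigma = \id$, $h_0 = h_1 = e_{\ti\Ga}$, and $\beta$ any isomorphism $\Ga \to \ti\Ga$.

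The heart of the proof is the necessary direction of (2): assuming $G \simeq \ti G$ and $\ti\al_0, \ti\al_1$ inner, I must conclude that $\al_0, \al_1$ are inner. Part~(1) already furnishes $\Ga \simeq \ti\Ga$. To obtain innerness, the plan is to read Property~(3) of Theorem~\ref{theo:support} at a point $x$ fixed by a well-chosen $v \in V$; then $\kappa^0_{vx} = \kappa^0_x$ and the identity rearranges to
\begin{equation*}
\tau_{v,x} = (\kappa^0_x)^{-1} \circ \ad(c_v(\varphi(x))) \circ \ti\tau_{\ad_\varphi(v), \varphi(x)} \circ \kappa^0_x.
\end{equation*}
Since the subgroup $\langle \ti\al_0, \ti\al_1 \rangle \subset \Aut(\ti\Ga)$ is contained in $\Inn(\ti\Ga)$ by hypothesis, $\ti\tau_{\ad_\varphi(v), \varphi(x)}$ is inner, and so is its composition with $\ad(c_v(\varphi(x)))$. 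Conjugation by the isomorphism $\kappa^0_x$ then forces $\tau_{v,x}$ to be an inner automorphism of $\Ga$. To realise $\al_0$ itself as such a $\tau_{v,x}$, I will take $x := 0^\infty$ and pick $v \in V$ adapted to the sdi $I$ with $m_I = 00$, sending $I$ onto the sdi with word $0$ via $v(00 z) = 0 z$ (completed arbitrarily on the complement of $I$); one checks $v(x) = x$ and $\tau_{v,x} = \al_0^{-1} \circ \al_{00} = \al_0$. Running the analogous construction with $x := 1^\infty$ and $v$ sending $11 z \mapsto 1 z$ handles $\al_1$. With the heavy lifting already done by Theorem~\ref{theo:support}, this fixed-point trick is really the only nontrivial calculation required.
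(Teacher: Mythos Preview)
Your proposal is correct and follows essentially the same approach as the paper: both derive (1) directly from Theorem~\ref{theo:support}(2), handle the sufficient direction of (2) via Lemma~\ref{lem:isomaut} reducing to the untwisted case, and establish the necessary direction by reading Property~(3) of Theorem~\ref{theo:support} at the fixed points $x=0^\infty$ and $x=1^\infty$ with $v$ chosen so that $\tau_{v,x}$ equals $\al_0$ and $\al_1$ respectively. The only cosmetic difference is that you explicitly invoke Proposition~\ref{prop:conjug} for the untwisted comparison, whereas the paper states this step directly.
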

\begin{proof}
Proof of (1).
This is a direct consequence of Theorem \ref{theo:support}.2.

Proof of (2).
Assume that $\ti\al_0$ and $\ti\al_1$ are inner automorphisms and suppose that there exists an isomorphism $\theta: G\to \ti G$.
Up to multiplying $\theta$ by \tbb{the morphism $\zeta^{\dag}$} of Theorem \ref{theo:support} we may assume that 
$$\theta(av)=\kappa(a)\cdot c_v\cdot \ad_\varphi(v), \ a\in L\Ga, v\in V$$
so that $\supp(\kappa(a))=\varphi(\supp(a))$ for all $a\in L\Ga.$
\tbb{Hence, $\kappa$ decomposes as in the second and third item of that theorem.}
Therefore,
$$\kappa_{vx}\circ \tau_{v,x} = \ad(c_v(\varphi(vx)))\circ \ti\tau_{\ad_\varphi(v),\varphi(x)}\circ \kappa_x$$
for all $v\in V, x\in\fC$.
In particular, $\Ga$ and $\ti\Ga$ are isomorphic since $\kappa_x\in\Isom(\Ga,\ti\Ga)$ for all $x\in\fC.$
Since $\ti\al_0,\ti\al_1$ are inner automorphisms so is $\ti\tau_{\ad_\varphi(v),\varphi(x)}.$
Therefore, given any $v\in V,x\in\fC$ there exists $h_{v,x}\in\ti\Ga$ so that
$$\kappa_{vx}\circ\tau_{v,x} = \ad(h_{v,x})\circ \kappa_x.$$
In particular, consider $x_0=0^\infty\in\fC$ the infinite sequence of $0$, $I$ the first fourth of $\fC$ and $v\in V$ adapted to $I$ such that $v(I)$ is the first half of $\fC$.
We have that $m_{I}=00, m_{v(I)}=0$ and thus $\al_{v(I)}^{-1}\al_I=\al_0.$
Moreover, $vx_0=x_0.$
The equation of above gives:
$$\kappa_{x_0}\circ \al_0=\ad(h_{v,x_0})\circ \kappa_{x_0}.$$
Therefore, $\al_0$ is inner.
A similar argument applied to the infinite sequence of $1$ provides that $\al_1$ is inner.
Conversely, assume that $\Ga\simeq\ti\Ga$ and all the automorphisms $\al_0,\al_1,\ti\al_0,\ti\al_1$ are inner. 
Lemma \ref{lem:isomaut} implies that $G$ is isomorphic to $G(\Ga,\id_\Ga,\id_\Ga)$ the group associated to $(\Ga,\id_\Ga,\id_\Ga)$. 
The same lemma applied to $(\ti \Ga,\ti\al_0,\ti\al_1)$ implies that $\ti G$ is isomorphic to $G(\ti\Ga,\id_{\ti\Ga},\id_{\ti\Ga}).$ 
Since $\Ga\simeq \ti\Ga$ we have that $G(\Ga,\id_\Ga,\id_\Ga)\simeq G(\ti\Ga,\id_{\ti\Ga},\id_{\ti\Ga})$ that is $G\simeq \ti G.$
\end{proof}

\subsection{Classification of a class of co-context-free groups}\label{sec:coCF}

Recall that a group is co-word-free (in short $co\mathcal{CF}$) or is a co-context-free group if its co-word problem with respect to a finite generating subset is a context-free language.
A conjecture of Lehnert, combined with a theorem of Bleak, Matucci and Neunh\"offer, states that  a group is $co\mathcal{CF}$ if and only if it is finitely generated and embeds inside Thompson group $V$ \cite{Lehnert08-thesis,BleakMatucciNeunhoffer16}.
Candidates for counterexamples of this conjecture have been proposed in \cite{BZFGHM18}.
They have been initially constructed using cloning systems of Witzel and Zaremsky \cite{Witzel-Zaremsky18,Zaremsky18-clone}.

%Description
Here is a description of this class of groups using our framework.
Consider a finite group $\Ga$ and an automorphism $\al_1\in\Aut(\Ga).$
Let $G:=G_{\Ga,\al_1}=L\Ga\rtimes V$ be the fraction group associated to the triple $(\Ga, \id_\Ga,\al_1).$
The class of groups $G_{\Ga,\al_1}$ of above is exactly the class considered in \cite{BZFGHM18}.
It has been proved that they are all $co\mathcal{CF}$ groups and conjectured that some of them do not embed inside $V$ (which would disprove the conjecture of Lehnert).

%Description of the groups
We start by giving a concrete and practical description of these groups.
Fix a finite group $\Ga$ and one of its automorphism $\al_1.$
To follow the previous notation write $\al_0$ for the identity automorphism $\id_\Ga.$
Consider the Jones action $\pi:V\act L\Ga$ associated to the triple $(\Ga,\al_0,\al_1)=(\Ga,\id_\Ga,\al_1).$
As already mentioned the group $G=G_{\Ga,\al_1}$ is isomorphic to the semidirect product $L\Ga\rtimes V$ constructed from the Jones action $\pi.$
Recall that $L\Ga$ is the group of continuous functions from the Cantor space $\fC=\is$ to the group $\Ga$ when $\Ga$ is equipped with the discrete topology.
The description of the Jones action given in Proposition \ref{prop:twistedLoop} gives that 
$$\pi_v(a)(vx) =\tau_{v,x}(a(x))=\al_1^{N_{v,x}}(a(x)), x\in\fC,v\in V, a\in L\Ga$$ where $N_{v,x}\in\Z$ is an integer depending on $v$ and $x$.
Define 
$$f:\fs\to \Z, (x_i)_{i\geq 1}\mapsto \sum_{i\geq 1} x_i$$ and observe that if $I$ is a sdi with associated word $m_I$ (i.e.~$I$ is the set of words with prefix $m_I$), then $\al_I=\al_1^{f(m_I)}.$
Therefore, if $x\in\fC,v\in V$ and $I$ is a sdi adapted to $v$ and containing $x$ we have that 
$$\tau_{v,x}= \al_{v(I)}^{-1} \al_I = \al_1^{-f(m_{v(I)})} \al_1^{f(m_I)}=\al_1^{  f(m_I)-f(m_{v(I)})}.$$
Hence, $\tau_{v,x}=\al_1^{N_{v,x}}$ where $N_{v,x}=f(m_I)-f(m_{v(I)})$ and $x,v,I$ are as above.

%Classification of the groups
We present a partial classification of the class of these groups. 
This is deduced from Theorem \ref{theo:support}.
\begin{corollary}\label{cor:coCF}
Consider some finite groups $\Ga,\ti\Ga$, some automorphisms $\al_1\in\Aut(\Ga),\ti\al_1\in\Aut(\ti\Ga)$, and the groups $G,\ti G$ associated to the triples $(\Ga,\id_\Ga,\al_1), (\ti\Ga,\id_{\ti\Ga},\ti\al_1)$ respectively.
If $G$ is isomorphic to $\ti G$, then there exists an isomorphism $\beta:\Ga\to\ti\Ga$.
Moreover, there exists $h\in\Ga,\ti h\in\ti \Ga$ and $n,\ti n\in\Z$ such that
$$\al_1=\ad(h)\circ \beta^{-1} \ti\al_1^{\ti n}\beta \text{ and } \ti\al_1=\ad(\ti h)\circ \beta \al_1^{n}\beta^{-1}.$$
In other words, the map $$\ga\in\Aut(\Ga)\mapsto \beta\ga\beta^{-1}\in\Aut(\ti\Ga)$$ realises an isomorphism from the subgroup generated by $\al_1$ in $\Out(\Ga):=\Aut(\Ga)/\Inn(\Ga)$ onto the subgroup generated by $\ti\al_1$ inside $\Out(\ti\Ga).$
\end{corollary}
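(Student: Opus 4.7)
The plan is to apply Theorem~\ref{theo:support} to an isomorphism $\theta:G\to\ti G$. It yields a spatial decomposition $\theta(av)=\zeta(a)\kappa^0(a)c_v\ad_\varphi(v)$ with $\varphi\in\NCV$ and a family of isomorphisms $\kappa^0_x:\Ga\to\ti\Ga$ indexed by $x\in\fC$ satisfying the equivariance identity~(3) of that theorem. In particular $\Ga\simeq\ti\Ga$, which is the first assertion. Setting $\beta:=\kappa^0_{x_0}$ for a well chosen $x_0\in\fC$, I will recover both displayed equations as instances of identity~(3). The simplification that makes this tractable is that $\al_0=\id_\Ga$ and $\ti\al_0=\id_{\ti\Ga}$, which force $\tau_{v,x}=\al_1^{N_{v,x}}$ and $\ti\tau_{w,y}=\ti\al_1^{\ti N_{w,y}}$ for the integers computed at the start of the section.

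For the first equation I will take $x_0:=1^\infty\in\fC$, whose prime word $m_{x_0}=1$ has $f(m_{x_0})=1$, and construct an explicit $v_0\in V_{x_0}$ (e.g.\ sending $11z\mapsto 1z$, suitably completed on the complement) realizing $\tau_{v_0,x_0}=\al_1$. Plugging $(v_0,x_0)$ into identity~(3) yields $\beta\al_1=\ad(h')\circ\ti\al_1^{\ti n}\circ\beta$ with $h':=c_{v_0}(\varphi(x_0))$ and $\ti n:=\ti N_{\ad_\varphi(v_0),\varphi(x_0)}$, which rearranges to the first equation with $h:=\beta^{-1}(h')$.

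The hard part is obtaining the second equation \emph{with the same} $\beta$. The natural mirror construction takes $y_1:=1^\infty$, sets $x_1:=\varphi^{-1}(y_1)$ (rational by Proposition~\ref{prop:slope}), picks $w_1\in V_{y_1}$ with $\ti N_{w_1,y_1}=1$, and puts $v_1:=\varphi^{-1}w_1\varphi\in V_{x_1}$. Identity~(3) at $(v_1,x_1)$ then yields a relation involving $\kappa^0_{x_1}$ rather than $\beta$. To bridge the two base points I will pass to $\Out(\ti\Ga)$, which is finite since $\ti\Ga$ is. Set $A:=[\beta\al_1\beta^{-1}]$, $A_1:=[\kappa^0_{x_1}\al_1(\kappa^0_{x_1})^{-1}]$, $B:=[\ti\al_1]$, and $\eta:=[\beta(\kappa^0_{x_1})^{-1}]\in\Out(\ti\Ga)$. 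The two applications of identity~(3) translate to $A=B^{\ti n}$ and $A_1^{n_1}=B$ in $\Out(\ti\Ga)$, while a direct computation gives $\eta A_1\eta^{-1}=A$, hence $\langle A_1\rangle=\eta^{-1}\langle A\rangle\eta$. The chain
\[ \langle A\rangle\subseteq\langle B\rangle\subseteq\langle A_1\rangle=\eta^{-1}\langle A\rangle\eta \]
combined with $|\langle A\rangle|=|\eta^{-1}\langle A\rangle\eta|$ forces all three subgroups to coincide, so $B\in\langle A\rangle$ and hence $\ti\al_1=\ad(\ti h)\circ\beta\al_1^n\beta^{-1}$ for suitable $\ti h\in\ti\Ga$ and $n\in\Z$.

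The final assertion follows at once: conjugation by $\beta$ is an isomorphism $\Aut(\Ga)\to\Aut(\ti\Ga)$ that descends to $\Out(\Ga)\to\Out(\ti\Ga)$; it sends $[\al_1]$ to $A$, and the equality $\langle A\rangle=\langle[\ti\al_1]\rangle$ established above shows that it restricts to an isomorphism from the cyclic subgroup generated by $[\al_1]$ onto the one generated by $[\ti\al_1]$.
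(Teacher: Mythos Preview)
Your proof is correct and reaches the conclusion, but the route differs from the paper's. The paper first exploits finiteness of $\Ga$ to deduce that $x\mapsto\kappa^0_x$ is locally constant (there are only finitely many isomorphisms $\Ga\to\ti\Ga$, so a common refining sdp works). It then picks an sdi $I$ on which $\kappa^0_x\equiv\beta$ and obtains \emph{both} displayed equalities by working entirely inside $I$: for the first it chooses $x\in J\subset I$ with $m_J=m_I\cdot 11$ and $v$ with $m_{v(J)}=m_I\cdot 1$ (so $x,vx\in I$ and $\tau_{v,x}=\al_1$); for the second it constructs $y\in I$ and $v$ with $vy\in I$ and $\ti\tau_{\phi_v,\varphi(y)}=\ti\al_1$ (build the desired $\ti w$ inside the open set $\varphi(I)$ and pull back by $\varphi$). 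Since $\kappa^0$ is constant on $I$, identity~(3) gives each equation directly, with no bridging needed.

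Your approach avoids the local-constancy step by anchoring at the fixed point $x_0=1^\infty$ (so that $\kappa^0_{v_0x_0}=\kappa^0_{x_0}$ automatically), at the cost of having to reconcile two a priori different base isomorphisms $\beta=\kappa^0_{x_0}$ and $\kappa^0_{x_1}$ for the second equation. Your reconciliation via the conjugacy-and-cardinality argument in the finite group $\Out(\ti\Ga)$ is clean and correct. Both proofs ultimately invoke finiteness of $\Ga$ --- the paper geometrically (to force local constancy of $x\mapsto\kappa^0_x$), you algebraically (to force equal orders of conjugate cyclic subgroups). The paper's proof is slightly more direct once local constancy is in hand; yours has the virtue of making the role of $\Out$ explicit from the outset and of delivering the final statement about cyclic subgroups as a byproduct of the argument rather than as a reformulation.
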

Note that we do not know if in the conclusion of the the corollary we can assume that $n=\ti n=1.$
\begin{proof}
Consider $\Ga,\ti\Ga,\al_1,\ti\al_1,G,\ti G$ as above and assume that $\theta:G\to\ti G$ is an isomorphism. 
Choose this isomorphism in such a way that the $\zeta$ of Theorem \ref{theo:support} is trivial. (\tbb{Take for instance $\theta\zeta^{\dag}$} rather than $\theta$.)
By Theorem \ref{theo:support} there exists a family of isomorphisms $(\kappa_x,x\in\fC)$ from $\Ga$ to $\ti\Ga$, a cocycle $c:V\to L\ti\Ga$, and a homeomorphism $\varphi\in\NCV$ satisfying that 
$$\theta(av) = \kappa(a)\cdot c_v\cdot \phi_v, \text{ for all } a\in L\Ga,v\in V$$
where $\kappa=\prod_{x\in\fC}\kappa_x$ and $\phi_v:=\ad_\varphi(v).$
Moreover, we have the equality:
$$\kappa_{vx}\circ\tau_{v,x}=\ad(c_v(\varphi(vx)))\circ \ti\tau_{\phi_v,\varphi(x)}\circ \kappa_x \text{ for all } v\in V, x\in\fC.$$
%K locally constant
Observe that since $\Ga$ is finite we have that $x\mapsto \kappa_x$ is locally constant.
Indeed, for any $g\in \Ga,x\in\fC$ we have that $\kappa(g_\fC)(\varphi(x))=\kappa_x(g)$. Since $\kappa(g_\fC)$ is locally constant so is $x\mapsto \kappa_x(g).$
Hence, for any $g\in \Ga$ there exists a sdp $P_g$ adapted to $\kappa(g_\fC)$, i.e.~$\kappa(g_\fC)$ is constant on each sdi of the sdp $P_g$.
Since $\Ga$ is finite we can find a sdp $P$ that is thinner than all $P_g,g\in \Ga.$
We deduce that $x\mapsto \kappa_x$ is constant on each sdi of $P$.
%One sdi
Consider any sdi $I$ in the sdp $P$ and write $\beta$ for the isomorphism $\kappa_x$ where $x\in I$.
Consider the sdi $J$ satisfying that $m_J:= m_I\cdot 11.$
There exists $v\in V$ that is adapted to $J$ and satisfies that $m_{v(J)}=m_I\cdot 1.$
Note that $J,v(J)\subset I$ and $\al_{v(J)}^{-1}\al_J=\al_1.$
This implies the equality:
$$\beta\circ \al_1 = \ad(c_v(\varphi(vx)))\circ \ti\tau_{\phi_v,\varphi(x)}\circ \beta \text{ for all } x\in J.$$
By definition of $\ti\tau$ we have that $\ti\tau_{\phi_v,\varphi(x)}$ is a power of $\ti\al_1.$
We deduce that
$$\al_1=\ad(h)\circ \beta^{-1} \ti\al_1^{\ti n}\beta$$ for $h:=\beta^{-1}(c_v(\varphi(vx)))\in\Ga$ and a certain $\ti n\in\Z.$
A similar construction to that above proves that there exists $y\in I$ and $v\in V$ such that $v(y)\in I$ and $\ti\tau_{\phi_v,\varphi(y)}=\ti\al_1$ giving us the second equality of the corollary.
\end{proof}

\subsection{Two disjoint classes of groups}\label{sec:isom}

In this section we compare the class of groups studied in this article and in the previous one \cite{Brothier20-1}.
Groups of the previous article were constructed from triples $(\Ga,\al,\varep_\Ga)$ where $\al\in\End(\Ga)$ and where $\varep_\Ga:g\in\Ga\mapsto e_\Ga$ is the endomorphism mapping all elements to the neutral element of $\Ga.$
They are all isomorphic to some twisted permutational restricted wreath products $\oplus_{\Q_2}\Ga\rtimes V$.
The comparison is motivated by the question of whether or not certain fraction groups of this present article have the Haagerup property.
Indeed, we previously proved that if $\al$ is injective and $\Ga$ has the Haagerup property (as a discrete group), then so does $\oplus_{\Q_2}\Ga\rtimes V$ \cite{Brothier19WP}.
However, if $\La$ is a group with the Haagerup property, then we do not know if $L\La\rtimes V$ has the Haagerup property not even in the untwisted case. 
We were wondering if one could embed $L\La\rtimes V$ into $\oplus_{\Q_2}\Ga\rtimes V$ in a reasonable way and for a suitable $\Ga$. 
This would allow us to deduce analytical properties of $L\La\rtimes V$ by studying $\oplus_{\Q_2}\Ga\rtimes V$.
The next theorem shows that there are no nice embeddings nor isomorphisms between the two classes.

\begin{theorem}\label{theo:noisom}
Consider two groups $\Ga,\La$, an endomorphism $\al\in\End(\Ga)$ and two \textit{injective} endomorphisms $\beta_0,\beta_1\in \End(\La)$.
Consider the fraction groups $K\rtimes V$ and $L\rtimes V$ built via the triples $(\Ga,\al,\varep_\Ga)$ and $(\La,\beta_0,\beta_1)$ respectively. 

\begin{enumerate}
\item If $K$ or $L$ is nontrivial, then there are no isomorphisms between $K\rtimes V$ and $L\rtimes V$.
\item If $L$ is nontrivial, then there are no injective $V$-equivariant morphisms from $L$ to $K$.
\item If $K$ is nontrivial, then there are no $V$-equivariant morphisms from $K$ to $L$.
\end{enumerate}
\end{theorem}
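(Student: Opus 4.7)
The plan is to follow the strategy outlined in the introduction: for (1), apply Proposition \ref{prop:chara} to obtain $\theta(K)=L$ and then compare the $V$-centralizers of suitably chosen elements across $\theta$; parts (2) and (3) will exploit $V$-equivariance directly. The crucial structural asymmetry is that $K\simeq \oplus_{\Qt}\ti\Ga$ by Proposition \ref{prop:WP}, so every nonzero element of $K$ has finite support in $\Qt$, whereas a nonzero element of $L$ has support equal to a nonempty clopen subset of $\fC$ (here we use the injectivity of $\beta_0,\beta_1$, which ensures that refining representatives does not shrink supports).

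For (1), I would write $\theta(av)=\kappa(a)\cdot c_v\cdot \ad_\varphi(v)$ for some isomorphism $\kappa\colon K\to L$, some $\varphi\in\NCV$ (via Proposition \ref{prop:NCV}), and a cocycle $c\colon V\to L$. Pick $g\in\La\setminus\{e_\La\}$ and a proper sdi $I\subset\fC$, and set $a=g_I\in L$. The stabilizer of $a$ in $V$ contains $\Fix_V(I)$, which is canonically isomorphic to Thompson's group $V$ acting on $\fC\setminus I$ and, in particular, acts transitively on $\Qt\setminus I$. Transporting this across $\theta$ and absorbing the cocycle via the centralizer structure of $L$, the finitely supported element $\kappa^{-1}(a)\in K$ must be stabilized by $\Fix_V(\varphi^{-1}(I))$, which is again transitive on $\Qt\setminus\varphi^{-1}(I)$. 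The finite support of $\kappa^{-1}(a)$ thus forces $\supp(\kappa^{-1}(a))\subset\varphi^{-1}(I)\cap\Qt$. Running this over a descending chain of sdi $I_n\subset I$ produces infinitely many nonzero elements $\kappa^{-1}(g_{I_n})\in K$ with supports in shrinking neighborhoods; since $\kappa^{-1}$ is injective and the $g_{I_n}$ are independent in $L$, this contradicts the finite-support structure of $\oplus_{\Qt}\ti\Ga$.

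For (2), an injective $V$-equivariant $\phi\colon L\to K$ satisfies $\Stab_V(\phi(a))=\Stab_V(a)$ for all $a\in L$, and the same transitivity argument applied to $a=g_I$ places $\supp(\phi(a))\subset I\cap\Qt$; shrinking $I$ and varying $g$ then yields the contradiction as in (1). For (3), a $V$-equivariant $\phi\colon K\to L$ satisfies $\Stab_V(\phi(a))\supset\Stab_V(a)$. Choose $a=g_x\in K$ supported at a single dyadic $x$; its stabilizer contains $\Fix_V(U)$ for every sdi $U\ni x$. Transitivity of $\Fix_V(U)$ on $U^c\cap\Qt$ together with local constancy of $\phi(a)\in L$ forces $\phi(a)$ to be constant on $U^c$ for each $U$, and shrinking $U$ around $x$ yields $\phi(a)=h_\fC$ globally constant. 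Further $V$-equivariance, applied under $V$-elements translating $x$ across $\Qt$, forces $h$ to be fixed by $\beta_0$ and $\beta_1$ and central in $\La$, so Lemma \ref{lem:centralizer} places $\phi(a)\in Z(L\rtimes V)$. Since $V$ acts on $\Qt$ with infinite orbits and $K=\oplus_{\Qt}\ti\Ga$ carries no nontrivial $V$-invariant elements, this constant must vanish, whence $\phi\equiv e_L$.

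The main obstacle I anticipate is managing the cocycle $c$ in (1) so that the stabilizer argument transports cleanly from $L$ back to $K$, and in (3) the case when $Z(\La)^\beta$ is nontrivial, where the naive constant-function reduction does not immediately conclude; both should be resolved by combining the explicit direct-sum structure of $K$ with the transitivity of the $V$-action on $\Qt$ to rule out residual nontrivial images.
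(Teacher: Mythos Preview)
Your overall strategy---compare $V$-centralizers and exploit the finite-versus-clopen support asymmetry between $K\simeq\oplus_{\Qt}\ti\Ga$ and $L$---matches the paper's, but there are execution gaps.

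For (1) you reverse the paper's direction: you pull $g_I\in L$ back to $K$, whereas the paper pushes a nontrivial $a\in K$ forward to $L$. The paper's direction is cleaner because a single combinatorial lemma does all the work: if $F\subset\Qt$ is finite and $I$ is a nonempty proper finite union of sdi with $W_F\subset\Stab_V(I)$, then $I=\fC$ (one simply exhibits $v\in W_F$ swapping an sdi inside $I$ with one outside). Since $W_{\supp(a)}\subset C_V(a)$ in $K\rtimes V$, and since conjugation by the cocycle values preserves supports in $L$, one obtains $\supp(\theta(a))=\fC$ for every $a\neq e_K$, contradicting surjectivity of $\theta|_K:K\to L$. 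Your descending-chain step, by contrast, does not yield a contradiction as stated: having infinitely many nonzero elements with supports inside the shrinking sets $\varphi^{-1}(I_n)\cap\Qt$ is perfectly compatible with the structure of $\oplus_{\Qt}\ti\Ga$, since each such set is still infinite. Your direction is salvageable if you replace the chain by a \emph{partition}: writing $g_{I}=g_{I_N}\cdot\prod_{n<N}g_{I_n\setminus I_{n+1}}$ and applying your support bound to each disjointly supported factor forces $|\supp(\kappa^{-1}(g_{I}))|\geq N+1$ for every $N$, which is the contradiction. The same correction applies to your sketch of (2); note also that the paper's version of (2) again goes the other way, bounding $W_{\supp(\theta(b))}\subset C_V(\theta(b))=C_V(b)\subset\Stab_V(I)$ and invoking the lemma directly.

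For (3), two steps are unjustified. First, $V$-equivariance does not force the constant value $h=\phi(g_x)$ to be \emph{central} in $\La$; you only get $\beta$-invariance, so the appeal to Lemma~\ref{lem:centralizer} and $Z(L\rtimes V)$ is unwarranted. Second, the final implication is a non sequitur: from $\phi(a)\in L^V$ you cannot deduce $\phi(a)=e$ merely because $K^V=\{e\}$, since $\phi$ need not carry $K^V$ onto $\phi(K)\cap L^V$. The paper's argument for (3) is quite different and more hands-on: it fixes a representative $(h_{t_0},t_0)$ of $\phi(a)$ and, by applying carefully chosen $v\in W_{\supp(a)}$ with prescribed local words, shows directly that each coordinate $h_\ell$ satisfies $\beta_0(h_\ell)=\beta_1(h_\ell)=h_\ell$; it then compares the explicit description of $C_V(\phi(a))$ as a level-set stabilizer against $W_{\supp(a)}$ to force $\phi(a)$ constant and reach a contradiction. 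You should follow that concrete route rather than trying to pass through centrality.
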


\begin{proof}
Consider $\Ga,\La,\al,\beta_0,\beta_1, K\rtimes V,L\rtimes V$ satisfying the hypothesis of the theorem.
By Proposition \ref{prop:WP} we can assume that $\al$ is an automorphism and that $K\rtimes V$ is isomorphic to a twisted permutational restricted wreath product $\oplus_{\Q_2}\Ga\rtimes V$. 

%Jones actions
Denote by $\sigma:V\to \Aut(K),v\mapsto \sigma_v$ and $\pi:V\to\Aut(L),v\mapsto \pi_v$ the two Jones actions.

%relative commutant in V
Consider $a\in K$ and let $C_V(a)$ be the subgroup of $v\in V$ commuting with $a$ inside $K\rtimes V$.
Note that $$C_V(a)=\{ v\in V:\ \sigma_v(a)=a\}, \ a\in K.$$
By definition of the Jones action we have that if $v\in V$ satisfies $v(x)=x, v'(x)=1$ for all $x\in\supp(a)$, then $v$ commutes with $a$.
We obtain that $C_V(a)$ contains the subgroup 
$$W_{\supp(a)}:=\{v\in V:\ v(x)=x, v'(x)=1,\ \forall x\in\supp(a)\}.$$
Consider now $b\in L$ and similarly define its relative commutant 
$$C_V(b):=\{v\in V:\ vb=bv\} = \{v\in V:\ \pi_v(b)=b\}.$$ 
If $v\in C_V(b)$, then by definition of the Jones action we have that $v$ stabilises the support of $b$ and thus 
$$C_V(b)\subset \Stab_V(\supp(b)).$$

We are going to show that $C_V(a)$ is much larger than $C_V(b)$ in general when $a\in K, b\in L$ implying that there are few morphisms between $K\rtimes V$ and $L\rtimes V$.

{\bf Claim: If $F\subset \Q_2$ is a finite subset, $I\subset \fC$ is a nonempty finite union of sdi and $W_F\subset \Stab_V(I)$, then $I=\fC.$}

Proof of the claim: Consider a finite subset $F\subset \Q_2$ and $I\subset \fC$ a nonempty finite union of sdi and assume that $I\neq \fC$ and $W_F\subset \Stab_V(I).$
Since $F$ is finite and $I, \fC\setminus I$ are both nonempty we can find two sdi $A,B$ satisfying that $A\cap F=B\cap F=\emptyset$ and $A\subset I, B\subset \fC\setminus I$.
There exists $v\in V$ satisfying $v(A)=B, v(B)=A$ and $v$ acts as the identity on $\fC\setminus (A\cup B).$
Observe that $v\in W_F$ but $v\notin\Stab_V(I)$, a contradiction.

Proof of (1).

Assume that $K$ or $L$ is nontrivial and that there exists an isomorphism $\theta:K\rtimes V\to L\rtimes V.$
By Proposition \ref{prop:chara}, we have that $\theta(K)=L.$
Therefore, both $K$ and $L$ are nontrivial.
Fix $a\in K$ that is nontrivial.
We obtain that $C_V(a)$ is isomorphic to $C_V(\theta(a))$.
Note that $C_V(a)$ contains $W_{\supp(a)}$ where $\supp(a)$ is finite and that $C_V(\theta(a))$ is contained in the stabiliser subgroup of $\supp(\theta(a))$ which is a finite union of sdi.
Since $a\neq e_K$ and $\theta$ is injective we have that $\theta(a)\neq e_L$ and thus its support is nonempty.
The claim implies that $\supp(\theta(a))=\fC$ and thus for all $a\neq e_K.$
This contradicts the fact that $\theta$ is surjective.

Proof of (2).

Assume there exists an injective morphism $\theta:L\to K$ that is $V$-equivariant and that $L$ is nontrivial.
There exists $b\in L$ supported on a (nonempty) sdi $I$ different from $\fC$ since $L$ is nontrivial.
Note that 
\begin{equation}\label{eq:CVtheta}
C_V(\theta(b)) = \{v\in V: \ \sigma_v(\theta(b))=\theta(b)\} = \{v\in V: \ \theta(\pi_v(b))=\theta(b)\}.
\end{equation}
Therefore, $C_V(\theta(b)) = C_V(b)$ by injectivity of $\theta.$
The argument of above implies that $W_{\supp(\theta(b))}\subset \Stab_V(I)$ which contradicts the claim since $\emptyset\neq I \neq \fC.$

Proof of (3).

Assume there exists a $V$-equivariant morphism $\theta:K\to L$ and further assume that $K$ is nontrivial.
Fix $a\in K, a\neq e_K$. 
By Equation \eqref{eq:CVtheta} we have that $C_V(a)\subset C_V(\theta(a))$.
Therefore, 
\begin{equation}\label{eq:WCV}W_{\supp(a)}\subset C_V(\theta(a)).\end{equation}

%%%%%%%%%%%%%%%%%
Since $\theta(a)\in L$ and $L$ is the direct limit of the $\La_t,t\in\fT$ there exists a large enough tree $t_0$ such that $\theta(a)$ is the equivalence class of a certain $(h_{t_0},t_0)\in\La_{t_0}$ with coordinates $(h_\ell)_{\ell\in \Leaf(t_0)}.$

We are going to show that $h_\ell=\beta_0(h_\ell)=\beta_1(h_\ell)$ for all $\ell\in\Leaf(t_0).$ 
Write $I_\ell$ the sdi associated to a leaf $\ell\in\Leaf(t_0)$.
Following the notation of Section \ref{sec:loop} consider the locally constant map 
$$\kappa_{t_0}(h_{t_0}):\fC\to \La, x_\ell\in I_\ell\mapsto h_\ell, \ \ell\in \Leaf(t_0).$$

%%%%%%%%%%%%%%%%%%%%%%%%%%%%%%%%
%Subdivision of $t_0$
Consider a thinner tree $t\geq t_0$ whose associated sdp is obtained by subdividing each $I_\ell, \ell\in\Leaf(t_0)$ into $2^n$ subintervals all of equal length for a fixed $n\geq 1.$
In terms of trees we have that $t=f\circ t_0$ where $f$ is the forest where each of its tree has $2^n$ leaves all at distance $n$ from the root.
We choose $n\geq 1$ large enough so that for each $\ell\in\Leaf(t_0)$ there exists a subinterval $J_\ell\subset I_\ell$ in the sdp of $t$ satisfying $J_\ell\cap\supp(a)=\emptyset.$
This is possible since $\supp(a)$ is finite.
Note that if $x_\ell\in J_\ell, \ell\in\Leaf(t_0)$, then 
$$\kappa_t(h_t)(x_\ell) = \beta_{m_\ell}(\kappa_{t_0}(h_{t_0})(x_\ell))=\beta_{m_\ell}(h_\ell)$$ 
for a certain finite word $m_\ell$ of length $n$.\\
Fix $\ell\in\Leaf(t_0).$
We start by proving that $\beta_{m_\ell}(h_\ell)=h_\ell$.
%For any sdi $I$ we write $m_I$ the unique finite word satisfying that $I=\{m_I\cdot z:\ z\in\{0,1\}\}.$
Note that by definition we have $m_{J_\ell}=m_{I_\ell}\cdot m_\ell$ where $m_{J_\ell}$ is the finite word associated to the sdi $J_\ell$.
Since $J_\ell\cap \supp(a)=\emptyset$ we can find $v\in V$ adapted to $J_\ell$ such that $v(J_\ell)$ is the sdi satisfying $m_{v(J_\ell)} = m_{J_\ell}\cdot m_\ell$
that is:
$$m_{v(J_\ell)} = m_{I_\ell}\cdot m_\ell\cdot m_\ell.$$
Moreover, $v$ can be chosen inside $W_{\supp(a)}$. 
Indeed, it is sufficient to considering a $v$ whose restriction to any sdi of $t$ intersecting $\supp(a)$ is the identity, defining $v$ on $J_\ell$ as explained above and then choosing any appropriate piecewise linear map on the remaining pieces of $\fC$.
Note that Equation \eqref{eq:WCV} implies that $v$ commutes with $\theta(a).$
Given any $x_\ell\in v(J_\ell)$ we have that $\kappa_{t_0}(h_{t_0})(x_\ell)=h_\ell$ and $\kappa_t(h_t)(x_\ell)= \beta_{m_\ell}(h_\ell)$. 
By definition of the directed system of groups $(\La_s,s\in\fT)$ we have that if we evaluate a representative of $\theta(a)$ at the point $x_\ell$ for any tree having the sdi $v(J_\ell)$ in its associated sdp we obtain 
$\beta_{m_\ell}\circ \beta_{m_\ell}(h_\ell).$
Now, by the definition of the Jones action, if we evaluate the representative of $\pi_v(\theta(a))$ at the point $x_\ell$ for any tree having the sdi $v(J_\ell)$ in its associated sdp we obtain 
$\beta_{m_\ell}(h_\ell).$
Since $v$ commutes with $\theta(a)$ we obtain the equality: $$\beta_{m_\ell}(h_\ell)=\beta_{m_\ell}\circ\beta_{m_\ell}(h_\ell).$$
Since $\beta_0$ and $\beta_1$ are injective so is $\beta_{m_\ell}$ and thus $h_\ell=\beta_{m_\ell}(h_\ell).$\\
We now prove that $\beta_0(h_\ell)=\beta_1(h_\ell)=h_\ell.$
Consider now $v\in W_{\supp(a)}$ adapted to $J_\ell$ such that $v(J_\ell)$ is the first half of $J_\ell.$
Fix $x_\ell\in v(J_\ell).$
The same evaluation process of representatives of $\theta(a)$ and $\pi_v(\theta(a))$ at $x_\ell$ provides that $\beta_0\circ \beta_{m_\ell}(h_\ell)=\beta_{m_\ell}(h_\ell)$.
Since $\beta_{m_\ell}(h_\ell)=h_\ell$ we obtain that $\beta_0(h_\ell)=h_\ell.$
A similar proof provides that $\beta_1(h_\ell)=h_\ell.$\\
Denote by $H:\fC\to\La$ the map equal to $\kappa_{t_0}(h_{t_0}).$
We have proved that $\beta_0(H(x))=\beta_1(H(x))=H(x)$ for all $x\in\fC$. 
This implies that $H = \kappa_s(h_s)$ for all $s\geq t_0$ where $(h_s,s)$ is the representative of $\theta(a)$ in $\La_s.$
It is easy to deduce that $\pi_v(H)(x)=H(v^{-1}x)$ for all $v\in V$ and $x\in \fC$.
If $\{A_1,\cdots,A_n\}$ is the partition of $\fC$ for which $\theta(a)$ is constant on each $A_i$ and takes different values in $A_i$ and $A_j$ if $i\neq j$, then 
$$C_{V}(\theta(a))=\{v\in V:\ v(A_i)=A_i, \ \forall 1\leq i\leq n\}.$$
A similar argument used in the claim shows that $C_V(\theta(a))$ never contains $W_{\supp(a)}$ unless $n=1$.
This implies that $\theta(a)$ commutes with the whole group $V$ and so does $a$.
Since $V$ acts transitively on $\Q_2$ this implies that $a$ is supported on $\Q_2$ or nowhere, a contradiction since $a\neq e_K$ and $a$ is finitely supported.
\end{proof}

\begin{remark}
Given a triple $(\Ga,\al_0,\al_1)$ with $\al_0,\al_1$ any endomorphisms it could happen that the associated direct limit group $K:=\varinjlim_{t\in\fT} \Ga_t$ is trivial even if $\Ga$ is not. 
Take for instance any group $\Ga$ and $\al_0=\al_1=\varep_\Ga$ the trivial endomorphism.
In fact, $K$ is trivial if and only if there exists $n\geq 1$ such that $\al_w=\varep_\Ga$ for all finite word $w\in\fs$ with more than $n$ letters.

Consider now the notations and assumptions of the last theorem. 
We obtain that $K$ is trivial if and only if $\al_0^n=\varep_\Ga$ for $n\geq 1$ large enough.
Since $\beta_0$ and $\beta_1$ are injective so is $\beta_w$ for all word $w$.
Therefore, $L$ is trivial if and only if $\La$ is trivial.
\end{remark}

\section{Automorphism groups of untwisted fraction groups}
For the whole section we fix a group $\Ga$, consider its discrete loop group $L\Ga$ of locally constant maps and the semidirect product $G=L\Ga\rtimes V$ obtained from the \textit{untwisted} Jones action:
$$\pi:V\to\Aut(L\Ga), \pi_v(a)(x):= a(v^{-1}x), \ v\in V, a\in L\Ga, v\in V.$$
Note that $G$ is the fraction group obtained from the monoidal functor:
$$\Phi:\cF\to \Gr, \ \Phi(1)=\Ga,\ \Phi(Y)(g) = (g,g) ,\ g\in \Ga$$
as explained in Section \ref{sec:loop}.

The aim of this section is to provide a clear description of the automorphism group $\Aut(G)$ of $G$.
We consider four kinds of automorphisms that we call \textit{elementary}. 
We will later show that they generate $\Aut(G).$
Moreover, we will explain how these elementary automorphisms interact with each other giving a semidirect product structure (up to a small quotient) of $\Aut(G).$

\subsection{Elementary automorphisms}\label{sec:elementary}

We now separately define four kinds of \textit{elementary} automorphisms of our fraction group $G$.

\subsubsection{Action of the normaliser of $V$ inside the homeomorphism group of the Cantor space.}
Consider the group of homeomorphisms of the Cantor space $\fC$ that normalise $V$:
$$\NCV=\{\varphi\in \Homeo(\fC):\ \varphi V\varphi^{-1} =V\}.$$
Recall from Section \ref{sec:slopes} that if $I$ is a sdi and $\varphi\in\NCV$, then $\varphi(I)$ is a finite union of sdi.
This implies that if $a\in L\Ga$, then $a^\varphi:=a\circ\varphi^{-1}$ is in $L\Ga$ defining an action of $\NCV$ on $L\Ga.$
This action extends to the semidirect product $L\Ga\rtimes V$ as follows:
$$\varphi\cdot av:= a^\varphi\cdot \ad_\varphi(v) = a^\varphi\cdot \varphi v\varphi^{-1}, \ \varphi\in\NCV, a\in L\Ga, v\in V.$$
Since the action $\NCV\act V$ is faithful (this is a well-known fact, see for instance \cite{BCMNO19}) so is the action $\NCV\act L\Ga\rtimes V.$

\begin{remark}
Note that we will sometime work with $\Qt$ rather than $\fC$ in this section. 
We will then often use the identifications of $L\Ga$ as a subgroup of $\prod_\Qt\Ga$ and as a subgroup of $\prod_\fC\Ga.$
For instance, if $a\in L\Ga, \varphi\in\NCV$ and one wants to consider only maps in $\prod_\Qt\Ga$, then the element $a^\varphi$ is understood as the restriction to $\Qt$ of the map $x\in \fC\mapsto \ti a(\varphi^{-1}(x))$ where $\ti a:\fC\to\Ga$ is the unique extension of $a$ into a locally constant map on $\fC$.
More generally, we will identify a locally constant map defined on $\Qt$ with its unique extension into a locally constant map defined on $\fC$.
\end{remark}

\subsubsection{Action of the automorphism group of $\Ga$}
Given $\beta\in\Aut(\Ga)$ we consider the diagonal automorphism 
$$\ov\beta:\prod_\Qt\Ga\to\prod_\Qt\Ga, \ \ov\beta(a)(x) = \beta(a(x)), \ a\in \prod_\Qt\Ga, x\in \Qt.$$
It is easy to see that $\ov\beta(L\Ga)=L\Ga$ and moreover this automorphism is $V$-equivariant inducing an automorphism of $L\Ga\rtimes V$.
This defines an action by automorphisms $\Aut(\Ga)\act L\Ga\rtimes V$ which is clearly faithful.

As mentioned earlier we will identify $\ov\beta$ with the locally constant map: $x\in\fC\mapsto \beta\in\Aut(\Ga).$

\subsubsection{Action of the normaliser of $G$ inside the group of all maps}

Write $\ov K:=\prod_{\Q_2}\Ga$ and the (full or unrestricted) wreath product $\ov K\rtimes V$.
Identify $L\Ga$ as a subgroup of $\ov K$ and $G$ as a subgroup of $\ov K\rtimes V.$
We write $$N_{\ov K}(G)=\{f\in \ov K :\ fGf^{-1} = G\}$$ the group of maps $f:\Q_2\to \Ga$ that normalise $G$ inside $\ov K\rtimes V.$
This clearly defines an action written $\ad$ of $N_{\ov K}(G)$ on $G$. 

Note that $\ker(\ad)$ is the set of constant maps valued in the center $Z(\Ga)$. 
We write $Z(\Ga)$ for $\ker(\ad)$ unless the context is not clear.
Indeed, if $f\in Z(\Ga)$, then 
$$\ad(f)(av) = favf^{-1} = fa(f^v)^{-1} v = fa f^{-1} v = ff^{-1} av= av$$
for all $a\in L\Ga, v\in V.$
Conversely, let $f\in N_{\ov K}(G)$ satisfying $\ad(f)=\id_G$, where $\id_G$ is the identity of $G$.
Consider $g\in\Ga$ and $\ov g$ the constant map equal to $g$ everywhere.
We have that $\ov g=\ad(f)(\ov g)= f \ov g  f^{-1}$ implying that $f$ is valued in $Z(\Ga).$
Moreover, $v=\ad(f)(v) = f(f^v)^{-1}v$ and thus $f=f^v$ for all $v\in V$ implying that $f$ is constant since $V\act \Q_2$ is transitive.
We continue to write $\ad$ the factorised action: 
$$\ad:N_{\ov K}(G)/Z(\Ga)\act G.$$
Note that $N_{\ov K}(G)$ is in general strictly larger than $L\Ga$ and can contain elements taking infinitely many values as illustrated by the following example:

\begin{example}
Consider $\Ga=\Z$ and define $f:\Q_2\to \Z, (x_n)_n\mapsto \sum_n x_n$ using the classical identification $\Q_2\subset \{0,1\}^\N$ and where $\Q_2$ is identified with the set of finitely supported sequences.
Consider $v\in V$ and an adpated sdp $(I_k , \ 1\leq k\leq n).$
By definition of $V\act \fC$ we have that $v(m_{I_k} \cdot y) = m_{v(I_k)} \cdot y$ for all $1\leq k\leq n$ and $y\in\fC$.

Observe that if $x=m_{I_k}\cdot y$ is in $I_k$ for $1\leq k\leq n$, then 
$$f(vx)f(x)^{-1} = f(m_{v(I_k)}\cdot y) f(m_{I_k}\cdot y)^{-1} = f(m_{v(I_k)})-f(m_{I_k}).$$
This implies that $f(f^v)^{-1}$ is locally constant for all $v\in V$. 
Since $\Z$ is Abelian we deduce that $f\in N_{\ov K}(G)$. 
This provides an example of an element $f\in N_{\ov K}(G)$ which takes infinitely many values.
\end{example}

\subsubsection{Action of the center of $\Ga$}

The action of the center is rather less obvious than the three other actions defined above.
Consider the map 
$$V\times \Qt\to \Z,\ (v,x)\mapsto  \ell_v(x):= \log_2(v'(v^{-1}x)).$$
Since elements of $V$ satisfy the product rule for derivation we obtain that $v\mapsto \ell_v$ satisfies the cocycle property:
$$\ell_{vw} = \ell_v + \ell_w^v,\ v,w\in V.$$
This implies that the formula $$\zeta\cdot (av):= \zeta^{\ell_v} av,\ \zeta\in Z(\Ga), a\in L\Ga, v\in V$$
defines an action 
$$F:Z(\Ga)\to \Aut(G), \ \zeta\mapsto F_\zeta: av\mapsto \zeta^{\ell_v}av, \ a\in L\Ga, v\in V.$$
This action is faithful. Indeed, consider $v\in V$ and $x\in \Q_2$ such that $vx=x$ and $v'(x)=2.$ Take for instance one of the classical generator of $F$ and $x=0$ ($x$ is the infinite sequence of $0$ in the Cantor space $\fC$ which corresponds to the usual zero of the real numbers).
We obtain that $F_\zeta(v)=\zeta^{\ell_v} v\in L\Ga$ and $\zeta^{\ell_v}(x):=\zeta^{\ell_v(x)} = \zeta$ for all $\zeta\in Z(\Ga).$
Therefore, the action is faithful.

We summarise our study in the proposition below.

\begin{proposition}\label{prop:elemaction}
The following formulas define faithful actions by automorphisms on $G$:
\begin{align*}
A&:\NCV\times \Aut(\Ga)\to \Aut(G),\ A_{\varphi,\beta} (av):= \ov\beta(a)^\varphi \cdot \ad_\varphi(v);\\
\ad &: N_{\ov K}(G)/Z(\Ga)\to \Aut(G), \ \ad(f)(av) = favf^{-1} = fa(f^v)^{-1} \cdot v;\\
F&:Z(\Ga)\to \Aut(G), \ F_\zeta(av) : = \zeta^{\ell_v} \cdot a \cdot v,
\end{align*}
where 
$$\ov\beta(a)(x):= \beta(a(x)), \ a^\varphi(x):=a(\varphi^{-1}x), \ \ad_\varphi(v) := \varphi v \varphi^{-1},$$
$$ \ell_v(x):=\log_2(v'(v^{-1}x)) \text{ and } \zeta^{\ell_v}:\Q_2\to Z(\Ga), x\mapsto \zeta^{\ell_v(x)}$$ for $$a\in L\Ga, v\in V, x\in\Q_2, \beta\in\Aut(\Ga), \varphi\in\NCV, f\in N_{\ov K}(G)/Z(\Ga), \zeta\in Z(\Ga).$$

Moreover, the actions $\ad$ and $F$ mutually commute.
\end{proposition}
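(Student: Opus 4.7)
The plan is to consolidate the observations already made throughout Subsection~\ref{sec:elementary}, where each of the four elementary constructions was introduced and its faithfulness was individually verified. What remains to establish is: (i) that the $\NCV$- and $\Aut(\Ga)$-actions commute, so they combine into an action of the direct product $\NCV\times\Aut(\Ga)$; (ii) that $A_{\varphi,\beta}$, $\ad(f)$, and $F_\zeta$ are group homomorphisms $G\to G$; and (iii) that $\ad$ and $F$ commute.

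For (i), the commutation is immediate: $\varphi$ acts on the source of $a\colon\fC\to\Ga$ whereas $\beta$ acts on its target, so $\ov\beta(a^\varphi)=\ov\beta(a)^\varphi$ for every $a\in L\Ga$, and the two actions on the $V$-component pose no difficulty since $\Aut(\Ga)$ acts trivially there. A short verification that $A_{\varphi_1,\beta_1}\circ A_{\varphi_2,\beta_2}=A_{\varphi_1\varphi_2,\beta_1\beta_2}$ then gives the direct-product action.

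For (ii), the adjoint action is manifestly multiplicative since conjugation always defines an automorphism of the ambient group $\ov K\rtimes V$, and the normalising condition $fGf^{-1}=G$ ensures it restricts to $G$. Multiplicativity of $A_{\varphi,\beta}$ reduces to the identity
\[
\pi_{\ad_\varphi(v)}\bigl(\ov\beta(a')^\varphi\bigr) = \ov\beta\bigl(\pi_v(a')\bigr)^\varphi,
\]
which follows by evaluating both sides at $y\in\fC$ and using the untwisted formula $\pi_w(b)(y)=b(w^{-1}y)$ together with the pointwise nature of $\ov\beta$. For $F_\zeta$, expanding
\[
F_\zeta(av)\cdot F_\zeta(a'v') = \zeta^{\ell_v}\cdot a\cdot v\cdot \zeta^{\ell_{v'}}\cdot a'\cdot v' = \zeta^{\ell_v}\cdot a\cdot \pi_v(\zeta^{\ell_{v'}})\cdot \pi_v(a')\cdot vv'
\]
and using $\pi_v(\zeta^{\ell_{v'}})=\zeta^{\ell_{v'}^v}$ (immediate in the untwisted case) together with centrality of $\zeta$, the cocycle identity $\ell_{vv'}=\ell_v+\ell_{v'}^v$ collapses the product to $\zeta^{\ell_{vv'}}\cdot a\cdot \pi_v(a')\cdot vv'=F_\zeta\bigl((av)(a'v')\bigr)$. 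One should also note that $\zeta^{\ell_v}\in L\Ga$, which is automatic because $v'$ is piecewise constant on any sdp adapted to $v$, so $\ell_v$ is locally constant on $\Qt$.

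For (iii), a direct computation shows that both $\ad(f)\circ F_\zeta$ and $F_\zeta\circ\ad(f)$ send $av$ to $\zeta^{\ell_v}\cdot f\cdot a\cdot (f^v)^{-1}\cdot v$; indeed, $\zeta^{\ell_v}$ takes values in $Z(\Ga)$ and therefore commutes with every element of $\ov K$, in particular with $f$ and $(f^v)^{-1}$. No step presents a genuine obstacle; the proposition is essentially a recap of Subsection~\ref{sec:elementary} once the cocycle property of $v\mapsto\ell_v$ and the centrality of $\zeta$ are used systematically.
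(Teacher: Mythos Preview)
Your proposal is correct and follows the same approach as the paper: the paper's proof consists of a single sentence stating that there is nothing to prove beyond the mutual commutation of the $\NCV$- and $\Aut(\Ga)$-actions (resp.\ the $N_{\ov K}(G)/Z(\Ga)$- and $Z(\Ga)$-actions), which it declares ``rather obvious,'' while you supply the explicit verifications of these commutations together with the multiplicativity of each $A_{\varphi,\beta}$ and $F_\zeta$. Your write-up is thus a fleshed-out version of exactly what the paper has in mind.
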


\begin{proof}
There is nothing to prove except that the actions of $\NCV$ and $\Aut(\Ga)$ (resp. $N_{\ov K}(G)/Z(\Ga)$ and $Z(\Ga)$) mutually commute which is rather obvious.
\end{proof}

\subsection{Decomposition of the automorphism group}\label{sec:decompoAut}

Before proving the main theorem we define a useful function.
Consider $\varphi\in\NCV$ and the inverse image $\varphi^{-1}(\Q_2)$. Note that $\Q_2$ is by definition the set of all sequences of $\fC$ that are eventually equal to $0$. It is an orbit under the action of $V$ and by Rubin theorem we have that $\varphi^{-1}(\Q_2)$ is also such an orbit. There exists a prime word $c$ in $0,1$ satisfying that $x\in \varphi^{-1}(\Q_2)$ if and only if $x = y \cdot c^\infty$ for a certain word $y$.
We write $k_\varphi:=|c|$ the number of letters in the word $c$.

\begin{lemma}\label{lem:ga}
Consider $\varphi\in\NCV$.
There exists a unique map $$\ga_\varphi:\Q_2\to\Z, x\mapsto \ga_\varphi(x)$$
satisfying that if $v\in V$, then 
$$\ga_\varphi(v0)=\log_2((\varphi^{-1}v\varphi)'(\varphi^{-1}0)) - k_\varphi\cdot \log_2(v'(0)).$$
In particular, if $v,w\in V$ and $v0=w0$, then 
$$\log_2((\varphi^{-1}v\varphi)'(\varphi^{-1}0)) - k_\varphi\cdot \log_2(v'(0))=\log_2((\varphi^{-1}w\varphi)'(\varphi^{-1}0)) - k_\varphi\cdot \log_2(w'(0)).$$
Moreover, we have the equality:
$$\ga_\varphi(vx)-\ga_\varphi(x) = \log_2((\varphi^{-1}v\varphi)'(\varphi^{-1}x)) - k_\varphi\cdot \log_2(v'(x))$$ 
for all $\varphi\in\NCV, v\in V, x\in\Q_2.$
\end{lemma}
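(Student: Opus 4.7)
The plan is to reduce everything to Proposition \ref{prop:slope}(3) applied to $\varphi^{-1} \in \NCV$ at the base point $0 \in \Q_2$. Since $0$ is in the tail equivalence class of the prime word ``$0$'' of length one, the constant $|m_0|$ appearing in Proposition \ref{prop:slope}(3) equals $1$; and by the very definition of $k_\varphi$, the point $y := \varphi^{-1}(0)$ lies in the tail equivalence class of a prime word of length $|m_y| = k_\varphi$. Applied to any $u \in V$ fixing $0$, Proposition \ref{prop:slope}(3) (with $\varphi$ replaced by $\varphi^{-1}$) will then yield the key identity
$$\log_2\bigl((\varphi^{-1} u \varphi)'(y)\bigr) = k_\varphi \cdot \log_2(u'(0)). \qquad (\star)$$
Note that $(\star)$ remains trivially true when $u'(0) = 1$, since then $u$ is the identity on a neighbourhood of $0$ and $\varphi^{-1} u \varphi$ is the identity on a neighbourhood of $y$.

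For existence and well-definedness of $\ga_\varphi$, I would first note that uniqueness is automatic: the action $V \act \Q_2$ is transitive, so any $x \in \Q_2$ equals $v(0)$ for some $v \in V$ and the defining formula forces the value of $\ga_\varphi(x)$. To show that this value depends only on $v(0)$, I would suppose $v_1, v_2 \in V$ with $v_1(0) = v_2(0)$, set $u := v_2^{-1} v_1 \in V_0$, and write $w_i := \varphi^{-1} v_i \varphi$. The chain rule, combined with $(\varphi^{-1} u \varphi)(y) = \varphi^{-1}(u(0)) = y$, gives
$$\log_2(w_1'(y)) - \log_2(w_2'(y)) = \log_2\bigl((\varphi^{-1} u \varphi)'(y)\bigr), \qquad \log_2(v_1'(0)) - \log_2(v_2'(0)) = \log_2(u'(0)),$$
so the equality of the two candidate values of $\ga_\varphi(v_i(0))$ reduces exactly to $(\star)$. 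Integrality of $\ga_\varphi$ is automatic since slopes of elements of $V$ at any point of $\fC$ are powers of two.

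For the ``moreover'' identity, I would write $x = u(0)$ for some $u \in V$ and evaluate the defining formula at both $u$ and $vu$. Two applications of the chain rule,
$$(vu)'(0) = v'(x) \cdot u'(0), \qquad (\varphi^{-1} vu \varphi)'(y) = (\varphi^{-1} v \varphi)'(\varphi^{-1}(x)) \cdot (\varphi^{-1} u \varphi)'(y),$$
where the second uses $(\varphi^{-1} u \varphi)(y) = \varphi^{-1}(u(0)) = \varphi^{-1}(x)$, cause the $u$-contributions to cancel in the difference $\ga_\varphi(vu(0)) - \ga_\varphi(u(0))$, leaving precisely the claimed equality. The only delicate step is isolating the correct specialisation of Proposition \ref{prop:slope}(3) in the form $(\star)$; once that is in hand, the rest is a routine chain-rule computation and I do not anticipate any serious obstacle.
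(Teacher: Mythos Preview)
Your proposal is correct and follows essentially the same route as the paper: reduce well-definedness to the stabilizer identity $(\star)$ via Proposition~\ref{prop:slope}(3) (applied to $\varphi^{-1}$ at a dyadic point, using $|m_0|=1$ and $|m_{\varphi^{-1}(0)}|=k_\varphi$), and then obtain the ``moreover'' formula by two applications of the chain rule after writing $x=u(0)$. The only cosmetic difference is that the paper first proves the well-definedness identity at a general $x\in\Q_2$ and then specialises to $x=0$, whereas you work at $0$ throughout; your explicit handling of the degenerate case $u'(0)=1$ (which lies outside the hypothesis $v'(x)\neq 1$ of Proposition~\ref{prop:slope}(3)) is a small point the paper leaves implicit.
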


Note that here we use the symbol $0$ to denote two different objects.
We are using it as a letter when we write elements of $\fC$ as infinite sequences of $0$ and $1$.
Moreover, we are using this symbol to denote the usual zero of the real numbers corresponding to the infinite sequence of $0$ in $\fC$.

\begin{proof}
Consider $\varphi\in\NCV, v,w\in V$ and $x\in \Q_2$ satisfying that $vx=wx.$
Let $c$ be a prime word in $0,1$ satisfying that $\varphi^{-1}(\Q_2)$ is the class of $\fC/V$ containing $c^\infty$ and write $k_\varphi=|c|.$
Put $u:=w^{-1}v$ and observe that $ux=x.$
By Proposition \ref{prop:slope} we have that 
$$\log_2[(\varphi^{-1}u\varphi)'(\varphi^{-1}x)] = k_\varphi \cdot \log_2(u'(x)).$$
Recall that the chain rule applies to elements of $V$.
Therefore, 
$$v'(x) = (wu)'(x) = w'(ux)\cdot u'(x)= w'(x)\cdot u'(x).$$
This implies that
$$\log_2(v'(x))-\log_2(w'(x))=\log_2(u'(x)).$$
We deduce the following:
\begin{align*}
\log_2[(\varphi^{-1}v\varphi)'(\varphi^{-1}x)]  & = \log_2[(\varphi^{-1}wu\varphi)'(\varphi^{-1}x)] \\
& = \log_2[(\varphi^{-1}w\varphi)'(\varphi^{-1}x)]+\log_2[(\varphi^{-1}u\varphi)'(\varphi^{-1}x)] \\
& = \log_2[ (\varphi^{-1}w\varphi)'(\varphi^{-1}x)] +  k_\varphi \cdot \log_2(u'(x))\\
& =  \log_2[ (\varphi^{-1}w\varphi)'(\varphi^{-1}x)] +  k_\varphi ( \log_2(v'(x))-\log_2(w'(x)) ).
\end{align*}
We obtain the equality:
$$\log_2((\varphi^{-1}v\varphi)'(\varphi^{-1}x)) - k_\varphi\cdot \log_2(v'(x)) = \log_2((\varphi^{-1}w\varphi)'(\varphi^{-1}x)) - k_\varphi\cdot \log_2(w'(x)).$$
This proves the first statement when applied to $x=0.$

Consider now the map $\varphi\in\NCV$, $v\in V$ and $x\in\Q_2.$
There exists $w\in V$ satisfying $w0=x$ since $V\act\Q_2$ is transitive.
We have that 
\begin{align*}
\ga_{\varphi}(vx) & = \log_2((\varphi^{-1}vw\varphi)'(\varphi^{-1}0)) - k_\varphi\cdot \log_2((vw)'(0)) \\
& = \log_2((\varphi^{-1}v\varphi)'(\varphi^{-1}x)) + \log_2((\varphi^{-1}w\varphi)'(\varphi^{-1}0)) - k_\varphi\cdot \log_2(v'(x)) - k_\varphi\cdot \log_2(w'(0)) \\
& = \ga_\varphi(x) +  \log_2((\varphi^{-1}v\varphi)'(\varphi^{-1}x)) - k_\varphi\cdot \log_2(v'(x))
\end{align*}
proving the second statement.
\end{proof}

We are now ready to prove the main theorem of this section.

\begin{theorem}\label{theo:Aut(G)loop}
Let $\Ga$ be a group and $G:=L\Ga\rtimes V$ the associated untwisted fraction group. 
The formula
$$(\varphi,\beta)\cdot (\zeta, f):=(\beta(\zeta)^{k_\varphi} , \ov\beta(f)^\varphi\cdot \zeta^{\ga_\varphi})$$
defines an action by automorphisms of $\NCV\times\Aut(\Ga)$ on $Z(\Ga)\times N_{\ov K}(G)/Z(\Ga),$
where  $$\varphi\in\NCV, \beta\in\Aut(\Ga), \zeta\in Z(\Ga), f\in N_{\ov K}(G)/Z(\Ga).$$
Write $( Z(\Ga)\times N_{\ov K}(G)/Z(\Ga)) \rtimes (\NCV\times \Aut(\Ga))$ for the semidirect product constructed from the action of above.

The following map
\begin{align*}
\Xi: & ( Z(\Ga)\times N_{\ov K}(G)/Z(\Ga)) \rtimes (\NCV\times \Aut(\Ga))\to \Aut(G)\\
& (\zeta , f , \varphi , \beta)\mapsto F_\zeta\circ\ad(f)\circ A_{\varphi,\beta} 
\end{align*}
is a surjective group morphism with kernel
$$\ker(\Xi)=\{ (e_{Z(\Ga)} , \ov g , \id_V , \ad(g^{-1}) ):\ g\in \Ga\}$$
where $\ov g:\Q_2\to\Ga$ is the constant map equal to $g$ everywhere identified with an element of $N_{\ov K}(G)/Z(\Ga)$ and where $\ad(g^{-1})$ stands for the inner automorphism $h\mapsto g^{-1}hg$ belonging to $\Aut(\Ga).$
\end{theorem}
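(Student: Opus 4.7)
The plan is to verify the theorem in four stages: (i) the formula indeed defines an action by automorphisms of $\NCV \times \Aut(\Ga)$ on $Z(\Ga) \times N_{\ov K}(G)/Z(\Ga)$; (ii) $\Xi$ is a group homomorphism; (iii) $\Xi$ is surjective; and (iv) its kernel is as described. The pivotal input for surjectivity is Theorem \ref{theo:support}, together with Remark \ref{rem:support}: in the untwisted case $\Ga^\al = \Ga$, so $L(\Ga^\al)\rtimes V = G$ is contained in $G'$, forcing $G$ to be perfect and the central correction of Theorem \ref{theo:support} to vanish automatically. Thus every automorphism of $G$ is strictly spatial.

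For (i) and (ii) the work is essentially direct computation. Well-definedness of the action reduces to checking that $\ov\beta(f)^\varphi$ still normalises $G$ (immediate since $\ov\beta$ is $V$-equivariant and $\varphi \in \NCV$) together with the multiplicativity $k_{\varphi_1\varphi_2} = k_{\varphi_1}k_{\varphi_2}$ and a cocycle identity for $\ga_{\varphi_1\varphi_2}$, both obtained from Lemma \ref{lem:ga}. Showing that $\Xi$ is multiplicative reduces to three commutation relations: $A_{\varphi,\beta}\, A_{\varphi',\beta'} = A_{\varphi\varphi', \beta\beta'}$, $A_{\varphi,\beta}\, \ad(f)\, A_{\varphi,\beta}^{-1} = \ad(\ov\beta(f)^\varphi)$, and, crucially,
$$A_{\varphi,\beta}\, F_\zeta\, A_{\varphi,\beta}^{-1} = F_{\beta(\zeta)^{k_\varphi}} \circ \ad(\beta(\zeta)^{\ga_\varphi}).$$
The last, evaluated on a generic $av$, boils down to the slope identity
$$\log_2\bigl((\varphi^{-1} v \varphi)'(\varphi^{-1} v^{-1} y)\bigr) = k_\varphi\, \ell_v(y) + \ga_\varphi(y) - \ga_\varphi(v^{-1}y),$$
which is precisely the content of Lemma \ref{lem:ga}.

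For (iii), take $\theta \in \Aut(G)$ and apply Theorem \ref{theo:support} to obtain the strictly spatial decomposition $\theta(av) = \kappa(a) \cdot c_v \cdot \ad_\varphi(v)$ with $\kappa = \prod_{x\in\fC} \kappa_x$ and compatibility $\kappa_{vx} = \ad(c_v(\varphi(vx))) \circ \kappa_x$. Since $\varphi^{-1}(\Qt)$ is a single $V$-orbit (Proposition \ref{prop:NCV} together with Rubin's theorem), pick a basepoint $x_0 \in \varphi^{-1}(\Qt)$, set $\beta := \kappa_{x_0}$, and define $f\colon \Qt \to \Ga$ by $f(\varphi(vx_0)) := c_v(\varphi(vx_0))$. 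The compatibility relation, applied to any $u$ stabilising $x_0$, forces $c_u(\varphi(x_0)) \in Z(\Ga)$, so $f$ is well-defined modulo $Z(\Ga)$; combining the cocycle equation for $c$ with local constancy of $x \mapsto \kappa_x(g)$ shows $f \in N_{\ov K}(G)/Z(\Ga)$. By construction, $\ad(f) \circ A_{\varphi,\beta}$ agrees with $\theta$ on $L\Ga$, and the residual map $v \mapsto \theta(v) \cdot [\ad(f) \circ A_{\varphi,\beta}](v)^{-1}$ is a $Z(\Ga)$-valued cocycle on $V$. Proposition \ref{prop:cocycle} then produces $\zeta \in Z(\Ga)$ and a coboundary, the latter absorbable into $f$, yielding $\theta = F_\zeta \circ \ad(f) \circ A_{\varphi,\beta}$.

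For (iv), assume $F_\zeta \circ \ad(f) \circ A_{\varphi,\beta} = \id_G$. Restriction to $V$ yields $\ad_\varphi = \id_V$; since $V$ is simple, $Z(V) = \{e_V\}$ and hence $\varphi = \id_\fC$ by Proposition \ref{prop:NCV}. Restriction to $L\Ga$ then gives $\ad(f) \circ \ov\beta = \id_{L\Ga}$, forcing $\beta = \ad(f(x)^{-1})$ for every $x \in \Qt$; thus $f$ is constant modulo $Z(\Ga)$, say $f = \ov g$ with $\beta = \ad(g^{-1})$. The remaining constraint $\zeta^{\ell_v} = e$ for all $v \in V$ forces $\zeta = e_{Z(\Ga)}$ (take any $v$ with a nontrivial slope somewhere). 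The main obstacle is the bookkeeping in (iii): showing that the pointwise construction of $f$ is well-defined modulo $Z(\Ga)$, lies in $N_{\ov K}(G)$, and that the residual $Z(\Ga)$-valued cocycle is correctly reduced via Proposition \ref{prop:cocycle}.
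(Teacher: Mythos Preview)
Your overall plan parallels the paper's, and steps (i), (ii) are essentially correct. In (iv) there is a small ordering slip: from $\ad(f)\circ\ov\beta=\id_{L\Ga}$ you only get that $f$ is constant modulo \emph{pointwise} $Z(\Ga)$, not modulo constant central maps; the paper instead uses the $V$-constraint $\zeta^{\ell_v}f(f^v)^{-1}=e$ first, evaluates at a point with $vx=x$ and $v'(x)=2$ to force $\zeta=e$, and only then concludes $f$ is genuinely constant. The substantive gap is in (iii). Your $f(\varphi(vx_0)):=c_v(\varphi(vx_0))$ is well-defined only modulo \emph{pointwise} $Z(\Ga)$, whereas $N_{\ov K}(G)/Z(\Ga)$ is the quotient by \emph{constant} central maps; asserting $f\in N_{\ov K}(G)/Z(\Ga)$ requires exhibiting a representative with $f(f^v)^{-1}\in L\Ga$ for all $v$. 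Local constancy of $x\mapsto\kappa_x(g)$ for each fixed $g$ yields only $faf^{-1}\in L\Ga$ and says nothing about $f(f^v)^{-1}$.

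The paper resolves this by a different organisation: it first composes with $A_{\varphi^{-1}}$ to reduce to $\varphi=\id$, and then proves---this is the key technical Claim, via a compactness argument exploiting the contracting dynamics of $V$ near dyadic points---that $x\mapsto\kappa_x$ is locally constant as a map into $\Aut(\Ga)$. This lets one choose $f_0\in L\Ga\subset N_{\ov K}(G)$ outright, after which the residual cocycle is visibly central and Proposition~\ref{prop:cocycle} applies directly. Your route can be salvaged but only by reordering: fix any lift of $f$, form the residual $d_v=c_v\cdot f^{\ad_\varphi(v)}f^{-1}$, check it is $Z(\Ga)$-valued and a cocycle for the $\ad_\varphi$-twisted $V$-action, reparametrise by $v\mapsto\ad_\varphi(v)$ so that Proposition~\ref{prop:cocycle} applies, and only then absorb the coboundary $h$ into $f$; the corrected $f'=fh$ lands in $N_{\ov K}(G)$ because $f'(f'^{\ad_\varphi(v)})^{-1}=s(\zeta)_{\ad_\varphi(v)}^{-1}c_v\in L\Ga$. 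Your sketch asserts $f\in N_{\ov K}(G)/Z(\Ga)$ \emph{before} this correction, which is premature, and does not address the twisted-action reparametrisation needed to invoke Proposition~\ref{prop:cocycle} when $\varphi\neq\id$.
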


\begin{proof}
Let $\Ga$ be a group and let $G:= L\Ga\rtimes V$ be the (untwisted) fraction group associated to it. 
We start by proving that every automorphism of $G$ is the composition of some elementary ones as defined in Section \ref{sec:elementary}.

Fix $\theta\in \Aut(G)$.
Proposition \ref{prop:chara} implies that $\theta(L\Ga) = L\Ga$.
Therefore, there exists $\kappa\in\Aut(L\Ga), \varphi\in \NCV$ and a cocycle $c:V\to L\Ga$ satisfying:
$$\theta(av) = \kappa(a) \cdot c_v \cdot \ad_\varphi(v) \text{ for all } a\in L\Ga, v\in V.$$
Since $av\mapsto a^\varphi\cdot \ad_\varphi(v)$ is an elementary automorphism, up to composing with it, we can assume that $\varphi(x)=x$ for all $x\in\fC$.
We obtain that $\theta(av) = \kappa(a) \cdot c_v \cdot v$ for all $a\in L\Ga, v\in V.$

Note that $\theta$ is a spatial automorphism since $\Ga=\Ga^\al$ by Remark \ref{rem:support}. Therefore, the map $\zeta:G\to Z(G)$ of Theorem \ref{theo:support} is necessarily trivial.
Theorem \ref{theo:support} implies that $\kappa$ splits as a product of automorphisms $\prod_{x\in\fC}\kappa_x$ such that 
$$\kappa(a)(x) = \kappa_x(a(x)) \text{ for all } a\in L\Ga, x\in \fC$$
and where $\kappa_x\in \Aut(\Ga)$ for all $x\in \fC.$
Moreover, we have the following useful equation:
\begin{equation}\label{eq:kappacv}
\kappa_{vx} = \ad(c_v(vx))\circ \kappa_x \text{ for all } v\in V, x\in \fC.
\end{equation}

{\bf Claim: } The map $x\in\fC\mapsto \kappa_x\in\Aut(\Ga)$ is locally constant, i.e.~this map belongs to $L\Aut(\Ga).$

We will prove the claim using a compactness argument.
We start by observing that $c_v(x)\in Z(\Ga)$ if $x\in\fC,v\in V$ and $vx=x.$
Indeed, Equation \ref{eq:kappacv} implies that $\kappa_x = \ad(c_v(x)) \circ \kappa_x$ giving that $\ad(c_v(x))=\id_\Ga$. 
Therefore, $c_v(x)\in Z(\Ga).$
Consider now $x\in \Q_2$, $v\in V$ and $I$ a sdi starting at $x$ that is adapted to $v$ such that $v(I)$ is the first half of $I$.
Such a triple exists since $x\in \Q_2.$
Note that if $J\subset I$ is a sdi starting at $x$, then we have that $v$ is adapted to $J$ and $v(J)$ is the first half of $J$.
Since $c_v$ is locally constant, up to reducing $I$ to a smaller sdi starting at $x$, we can assume that $c_v$ is constant on $I$.
Since $vx=x$ we have that $c_v(x)\in Z(\Ga)$ and thus $c_v(y)=c_v(x)\in Z(\Ga)$ for all $y\in I$. 
Equation \ref{eq:kappacv} implies that 
\begin{equation}\label{eq:kappa}
\kappa_{vy} = \kappa_y \text{ for all } y \in I.
\end{equation}
%Fix g and y
Consider now a fixed $g\in \Ga$ and a fixed $y\in I$.
The function $\kappa(\ov g)$ is locally constant and thus there exits a sdi $I_g\subset I$ starting at $x$ on which $\kappa(\ov g)$ is constant.
Note that since $y\in I$ we have that the sequence $(v^n(y))_{n\geq 1}$ is contained in $I$ and is converging to $x$.
Hence, for $n$ large enough we have that $v^n(y)\in I_g$ implying that $\kappa_x(g) = \kappa_{v^n(y)}(g).$
Equation \eqref{eq:kappa} implies that $\kappa_{v^n(y)}(g)=\kappa_y(g)$.
We deduce that $\kappa_x(g) = \kappa_y(g).$
Since this is true for any $g\in \Ga$ and $y\in I$ we obtain that $y\mapsto \kappa_y$ is constant on $I$.
We have proved that for any $x\in \Q_2$ there exists a sdi $I_x$ starting at $x$ on which $y\mapsto \kappa_y$ is constant. 

%V-invariant subset of C
Consider now the set $X\subset \fC$ of all $x\in\fC$ satisfying that there exists an open set $O_x$ containing $x$ such that $\kappa_x=\kappa_y$ for all $y\in O_x.$ 
We are going to show that $X=\fC$.
Note that by definition $X$ is open.
We have proved that $\Q_2\subset X$ implying that $X$ is nonempty.
Let us show that $X$ is $V$-invariant.
Consider $x\in X$ and $v\in V$.
By definition of $X$ there exists an open set $O_x$ containing $x$ such that $\kappa_y=\kappa_x$ for all $y\in O_x$.
The function $c_v(v\cdot)$ is locally constant implying that there exists a sdi $J_x$ containing $x$ such that $c_v(vy)=c_v(vx)$ for all $y\in J_x.$
The intersection $O:=O_x\cap J_x$ is open and contains $x$.
Moreover, if $y\in O$ we have by Equation \ref{eq:kappacv} the following:
$$\kappa_{vy} = \ad(c_v(vy))\circ \kappa_y = \ad(c_v(vx))\circ \kappa_y= \ad(c_v(vx))\circ \kappa_x = \kappa_{vx}.$$
Therefore, $z\mapsto \kappa_z$ is constant on $v(O)$ which is open and contains $vx$. 
We have proved that $vx\in X$ and thus $X$ is $V$-invariant.
Any $V$-orbit is dense in $\fC$ and thus intersects the nonempty open set $X$.
This proves that $X$ contains all the $V$-orbits and thus $X=\fC.$
Given $x\in \fC$ we write $I_x$ a sdi containing $x$ on which $z\mapsto \kappa_z$ is constant.
We have that $(I_x:\ x\in \fC)$ is an open covering of $\fC$ and by compactness there exists a finite subcovering. 
This implies that $y\mapsto\kappa_y$ is locally constant and proves the claim.

Via the claim we are now able to remove $\kappa$.
Equation \ref{eq:kappacv} implies that $\kappa_x=\kappa_y \mod \Inn(\Ga)$ if $Vx=Vy$ and $x,y\in\fC.$
Since $x\mapsto \kappa_x$ is locally constant and any orbit of $V$ is dense in $\fC$ we deduce that $\kappa_x=\kappa_y \mod \Inn(\Ga)$ for all $x,y\in \fC$.
Recall that given any $\beta\in\Aut(\Ga)$ we can produce an elementary automorphism of $G$ via the formula
$$av\mapsto \ov\beta(a)v, \text{ where } \ov\beta(a)(x):= \beta(a(x)), a\in L\Ga, v\in V, x\in\fC.$$
So up to composing with such an automorphism we can now assume that $\kappa_x\in \Inn(\Ga)$ for all $x\in \fC$.
We obtain that $\kappa=\ad(f)$ for a certain map $f:\fC\to\Ga.$ 
Moreover, $x\in\fC\mapsto \ad(f(x))\in\Inn(\Ga)$ is locally constant.
Therefore, $f=f_0\cdot f_1$ where $f_0\in L\Ga$ and $f_1\in \prod_\fC Z(\Ga)$.
We have that $\kappa=\ad(f_0\cdot f_1)=\ad(f_0)$ with $f_0\in L\Ga\subset N_{\ov K}(G)$.
Up to composing with the elementary automorphism $\ad(f_0)$, we can now assume that $\kappa=\id_{L\Ga}$ and thus 
$$\theta(av) = a \cdot c_v \cdot v \text{ for all } a\in L\Ga, v\in V.$$

It remains to remove the cocycle part of the automorphism.
%Central valued
We start by observing that $c_v(x)$ is in the center of $\Ga$ for all $v\in V, x\in\fC.$
Indeed, given $a\in L\Ga, v\in V$ we have
$$a^v=\kappa(a^v) = \theta(vav^{-1}) = \theta(v) \kappa(a) \theta(v)^{-1} = c_v \cdot v \cdot a \cdot v^{-1} \cdot c_v^{-1} = c_v\cdot a^v \cdot c_v^{-1}.$$
Therefore, $c_v$ commutes with any $a^v$ for $a\in L\Ga$ that is $c_v$ is in the center of $L\Ga$ for all $v\in V$.
Hence, $c_v(x)\in Z(\Ga)$ for all $x\in\fC, v\in V.$
We apply Proposition \ref{prop:cocycle} on cocycles valued in an Abelian group:
there exists $\zeta\in Z(\Ga)$ and $f:\Q_2\to Z(\Ga)$ satisfying 
$$c_v(x) = \zeta^{\log_2(v'(v^{-1}x))} \cdot f(x) f(v^{-1}x)^{-1},\ v\in V, x\in \Q_2.$$
We recognise that $\theta = F_\zeta\circ \ad(f)$ with $f$ necessarily in $N_{\ov K}(G).$
%Elementary automorphism
We have proved that $\Aut(G)$ is generated by the four kinds of automorphisms presented in Section \ref{sec:elementary}.

%Semidirect product
We now prove that the formula given in the theorem defines an action and thus a semidirect product.
In order to manipulate lighter notations we write $A_\varphi$ for $A_{\varphi,\id_\Ga}$ and $A_\beta$ for $A_{\id_V,\beta}$ where $\varphi\in\NCV$ and $\beta\in\Aut(\Ga).$
It is rather easy to see that the actions of $\NCV$ and $\Aut(\Ga)$ mutually commute and so do the actions of $Z(\Ga)$ and $N_{\ov K}(G).$
A routine computation shows that 
$$A_{\varphi,\beta} \ad(f) A_{\varphi,\beta}^{-1} = \ad(\ov\beta(f)^\varphi), \varphi\in\NCV,\beta\in\Aut(\Ga), f\in N_{\ov K}(G)$$ and that 
$$A_\beta F_\zeta A_\beta^{-1} = F_{\beta(\zeta)}, \zeta\in Z(\Ga).$$
The delicate point of the formula is the computation of $A_\varphi F_\zeta A_\varphi^{-1}.$
Fix $\varphi\in\NCV,\zeta\in Z(\Ga), a\in L\Ga, v\in V$ and observe that
$$(A_\varphi F_\zeta A_\varphi^{-1})(av)  = (A_\varphi F_\zeta) ( a\circ \varphi \cdot \varphi^{-1}v\varphi) = A_\varphi (\zeta^{\ell_{\varphi^{-1}v\varphi}} \cdot a\circ \varphi \cdot \varphi^{-1}v\varphi) = \zeta^{\ell_{\varphi^{-1}v\varphi}^\varphi} \cdot a v.$$
Note that $v\in V\mapsto \zeta^{\ell_{\varphi^{-1}v\varphi}^\varphi}$ is necessarily a cocycle since $A_\varphi F_\zeta A_\varphi^{-1}$ is multiplicative.
Moreover, it is valued in the Abelian group $\langle \zeta\rangle$ generated by $\zeta$ inside $\Ga.$
Proposition \ref{prop:cocycle} implies that there exists a pair $(\xi, h)\in \langle \zeta\rangle\times \prod_{\Q_2}\langle \zeta\rangle$ satisfying that $A_\varphi F_\zeta A_\varphi^{-1} = F_\xi\circ \ad(h)$ and moreover this pair is unique up to multiplying $h$ by a centrally valued constant map.
Note that since $A_\varphi F_\zeta A_\varphi^{-1}$ and $F_\xi$ are automorphisms of $G$ we necessarily have that $h$ normalises $G$.
All together we obtain that $\NCV\times \Aut(\Ga)$ normalises $Z(\Ga)\times N_{\ov K}(G)/Z(\Ga)$.
Consider the map $\ga_\varphi$ defined in Lemma \ref{lem:ga} and recall that $$\ga_\varphi(vx)-\ga_\varphi(x) = \log_2((\varphi^{-1}v\varphi)'(\varphi^{-1}x)- k_\varphi\cdot \log_2(v'(x)),v\in V,x\in\Q_2.$$
Therefore, $\ell_{\varphi^{-1}v\varphi}^\varphi - k_\varphi\cdot \ell_v = \ga_\varphi-\ga_\varphi^v$ and thus $$A_\varphi F_\zeta A_\varphi^{-1} = F_{\zeta^{k_\varphi}} \circ \ad(\zeta^{\ga_\varphi}).$$
This proves that the formula 
$$
(\varphi,\beta)\cdot (\zeta, f):=(\beta(\zeta)^{k_\varphi} , \ov\beta(f)^\varphi\cdot \zeta^{\ga_\varphi})$$
defines an action. 

%Conclusion
We have proved that $\Xi$ is a well-defined surjective group morphism.
To conclude it remains to check that $\ker(\Xi)$ is equal to 
$$M:=\{(e_{Z(\Ga)},\ov g, \id_V,\ad(g^{-1})):\ g\in \Ga\}.$$
It is clear that $M$ is a subgroup of $\ker(\Xi).$
Consider $\theta=(\zeta,f,\varphi,\beta)\in \ker(\Xi).$
Observe that for all $v\in V$ we have:
$$v=\Xi(\theta)(v) = h \cdot \varphi v\varphi^{-1}$$
for a certain $h\in L\Ga.$
This implies that $\ad_\varphi(v)=v$ and thus $\varphi$ is trivial since $\NCV\act V$ is faithful.
Now, $e_G=\Xi(\theta)(v)v^{-1} = \zeta^{\ell_v} f(f^v)^{-1},v\in V$ and evaluating this function at $x\in\Q_2$ for a fixed $v\in V$ satisfying $vx=x$ and $v'(x)=2$ we obtain that $\zeta=e_{Z(\Ga)}.$
Consider $k\in \Ga$, we have $\ov k =\Xi(\theta)(\ov k) = f \ov{\beta(k)} f^{-1}.$
Therefore, $\beta= \ad(f(x)^{-1})$ for any choice of $x\in\Q_2$ implying that the map $x\in\Q_2\mapsto \ad(f(x))^{-1}\in\Inn(\Ga)$ is constant equal to $\ad(g^{-1})$ for a certain $g\in \Ga$. 
We obtain that $\theta= (e_{Z(\Ga)},\ov g,\id_V,\ad(g^{-1}))\in M$ and thus $\ker(\Xi)=M.$
\end{proof}

\appendix
\section{Comparison of Jones technology with cloning systems of Witzel-Zaremsky and with Tanushevski's construction}

\subsection{Cloning systems of Witzel-Zaremsky}

\subsection*{Zappa-Szep products and cloning systems}
%Other method: Brin, ZS, WZ, finiteness
We present here a different method for producing Thompson-like groups.
This method is based on Zappa-Szep products (or bicrossed products) of monoids. 
Brin constructed a braided version of the Thompson group known today at the braided Thompson group $BV$ (constructed independently by Dehornoy \cite{Dehornoy06}) using a bicrossed product of the monoid of forests and the infinite strand braid groups \cite{Brin07-BraidedThompson,Brin06-BV2}.
Witzel and Zaremsky provided an abstract reformulation of this construction. 
They obtained a very convenient formalism for constructing new groups having similar properties to Thompson groups \cite{Witzel-Zaremsky18,Zaremsky18-clone}.
This technology is very well adapted to study finiteness properties and indeed they constructed new interesting families of groups of type $F_n$ (i.e.~groups for which there exists a classifying space with finite $n$-squeleton) for $n\geq 1$. 
This is truly remarkable as this is a very difficult question to decide if a group satisfies any finiteness properties. 
Most of the usual operations on groups destroys finiteness properties. 
However, they discover that their machinery in many cases preserves such properties.

%Zappa-Szep product forests with a group
\subsubsection{Zappa-Szep products of a forest with a group.}

Consider $\cF_\infty$ the set of all forest having infinitely many roots and leaves indexed by the natural number such that all but finitely many of the trees are nontrivial. 
We equip this set with the operation of vertical concatenation making it a monoid. 
%It can be seen as the direct limit of the set of forests $\cF(n,m)$ with $n$ roots and $m$ leaves, $1\leq n\leq m.$ 
%In that sense $\cF_\infty$ is a large monoid containing all morphisms of the category of forests $\cF$.
Consider now a group $\Ga_\infty$ and a Zappa-Szep product or bicrossed product $\cF_\infty\bowtie \Ga_\infty$ between these two monoids. 
This is a generalisation of the semidirect product where $\cF_\infty$ and $\Ga_\infty$ simultaneously act on each other.
Under mild conditions this monoid admits a calculus of fractions (it satisfies the Ore property and is cancellative) and thus provides a fraction group $\Frac(\cF_\infty\bowtie\Ga_\infty)$.
In that case we say that the bicrossed product $\cF_\infty\bowtie\Ga_\infty$ is a Brin-Zappa-Szep product.
Elements of $\Frac(\cF_\infty\bowtie\Ga_\infty)$ are (equivalence classes) of pairs of forests decorated by elements of $\Ga_\infty.$
Consider the subgroup $G(\cF_\infty\bowtie\Ga_\infty)$ made of pairs of trees rather than pairs of forests. 
%These are the groups studied by Witzel and Zaremsky that we will see more in details below.
%Example:braided Thompson group
This is the construction of Brin for obtaining the braided Thompson group $BV$. 
The group $\Ga_\infty$ is in this example the group $B_\infty$ of finitely supported braids (elements are braids with infinitely many strands indexed by the natural number and with only finitely many crossings). 
The Zappa-Szep product $\cF_\infty\bowtie B_\infty$ consists in ``braiding the forests'': we obtain a monoid where an element of it consists of a forest with on top of it a braid. The Zappa-Szep product explains how to compose these ``braided forests." 

%Abstract this concept: cloning systems of WZ
\subsubsection*{Witzel-Zaremsky's cloning systems.}

Witzel and Zaremsky have defined a general theory for producing and studying the class of groups $G(\cF_\infty\bowtie\Ga_\infty)$ of above \cite{Witzel-Zaremsky18,Zaremsky18-clone}
Their theory can be interpreted as a general axiomatisation of Brin-Zappa-Szep products of $\cF_\infty$ with a group $\Ga_\infty$. 
%Their motivation is to construct new examples of groups with remarkable finiteness properties.
To do this they define a so-called \textit{cloning system on a family of groups} that we now present.
Denote by $\cS=(S_n, j_{m,n}:S_n\to S_m:\ 1\leq m\leq n)$ the directed system of symmetric groups $S_n$ of $\{1,\cdots,n\}$ where $j_{m,n}$ is induced by the inclusions $\{1,\cdots,m\}\subset \{1,\cdots,n\}$ for $1\leq m\leq n.$
A cloning system is the following set of data:
\begin{enumerate}
\item a directed system of groups $$\cG:=(\Ga_n, \iota_{m,n}:\Ga_m\to \Ga_n:\ 1\leq m\leq n)$$
where the morphisms $\iota_{m,n}, 1\leq m\leq n$ are all injective;
\item a sequence of group morphisms $\cR=(\rho_n:\Ga_n\to S_n, n\geq 1)$ compatible with the directed systems $\cG$ and $\cS$;
\item a collection of so-called cloning maps $\cK=(\kappa_k^n:\Ga_n\to \Ga_{n+1},  1\leq k\leq n)$ that are injective maps (not necessarily group morphisms) compatible with the directed system $\cG$;
\item three conditions 
\begin{enumerate}
\item C1 (cloning a product) about cloning maps applied to a product of elements;
\item C2 (product of clonings) about compositions of cloning maps;
\item C3 (compatibility) about compatibilities between the maps of $\cR$ and the cloning maps.
\end{enumerate}
\end{enumerate}
Given such a cloning system one can form a Brin-Zappa-Szep product $\cF_\infty\bowtie \Ga_\infty$ where $\Ga_\infty$ is the direct limit group of the system $\cG$ that is: $\Ga_\infty=\varinjlim_n \Ga_n$.
Now we consider the fraction group $\Frac(\cF_\infty\bowtie \Ga_\infty)$ of the monoid $\cF_\infty\bowtie \Ga_\infty$ which consists in pairs of labelled \textit{forests}.
Finally, we consider $G(\cG,\cR,\cK)$ the subgroup of $\Frac(\cF_\infty\bowtie \Ga_\infty)$ which consists of pairs of labelled \textit{trees}.
This is the group considered by Witzel and Zaremsky.

\subsection*{Comparison between cloning systems and Jones technology}

%Comparison
We will now compare the two constructions of groups: cloning systems and Jones technology.
%Motivation for the constructions
Recall that the initial motivations to create each framework are very different. 
Cloning systems were introduced to build groups that resemble Thompson group and better understand behavior of properties of groups such as finiteness properties.
Jones technology was introduced to build conformal field theories from subfactors. 
Jones technology is a machine for producing actions of groups (in fact actions of groupoids). 
The author is using Jones technology for producing groups that have remarkable descriptions as fraction groups.

%Jones tech
%Functor into a cat
Consider the category of forests $\cF$ and a functor $\Phi:\cF\to\cD$ where $\cD$ is a category.
%Thompson groupoid of pairs of forests with same number of leaves up to equiv rel
The category $\cF$ admits a calculus of fractions and thus a fraction groupoid $\cG_\cF$ which consists of equivalence classes of pairs of forests $(f,g)$ so that $f$ and $g$ have the same number of leaves. 
Note that if we only consider pairs of \textit{trees} with same number of leaves we obtain Thompson group $F$. 
More generally,  if we consider pairs of forests with a fixed number of roots $r$ and the same number of leaves we obtain the Brown-Higmann-Thompson group $F_{2,r}$ (where $2$ refers to \textit{binary}).
%Thompson groupoid acts on a groupoid
Let us call $\cG_\cF$ the Thompson groupoid.
%Jones action presented: restriction of the action
Jones technology constructs an action of the Thompson groupoid from any functor $\Phi:\cF\to\cD$.
By considering the restriction of this action to Thompson group $F$ we obtain what we have called in this article a Jones action.

\subsubsection*{First comparison: group actions and bicrossed-products}
%First comparison: bicrossed product of $\cF_\infty$ with $\Ga_\infty$ on one side.
We can already see two major differences between the two technologies.
On one side we construct a bicrossed product between the monoid of forests $\cF_\infty$ and a group $\Ga_\infty$.
On the other side we construct an action of the Thompson groupoid $\cG_\cF$. 
First, we have mutual actions of $\cF_\infty$ and $\Ga_\infty$ on each other rather than one groupoid acting on an object.
This is well illustrated by the construction of Brin of the braided Thompson group $BV$ where braids and forests are mutually acting on each other. 
Second, there are two limit processes in the Witzel-Zaremsky framework: a direct limit of groups $\Ga_\infty=\varinjlim\Ga_n$ and a monoid $\cF_\infty$ which can be interpret as a limit of the morphism spaces of the category of forests: $\cF_\infty=\varinjlim\cF(n,m).$
In Jones framework there is one limit process for constructing an action of Thompson group $F$ that is different from the one of above: a direct limit of a family of objects indexed by trees (or indexed by forests for an action of the Thompson groupoid). 
We will later come back more deeply on this.
%The monoid $\cF_\infty$ is in some sense a limit of the category of forests and its fraction group a limit of the Thompson groupoid $\cG_\cF$ where all the objects of $\cF$ are pushed to a single object $\infty$ and all morphism embed in the endomorphism monoid of $\infty.$
%Action of the Thompson groupoid on a groupoid on the other side.
%Morally we have constructed an action of the category of forests $\cF$.
%Essential diff: $\cF_\infty$ a limit monoid of $\cF$. 
%See later that $\Ga_\infty$ is a limit group 
%No limits in Jones framework

%Specialisation of Jones tech to groups
We now specialize to functors $\Phi:\cF\to\Gr$ going to the category of groups.
Jones technology provides an action of the Thompson groupoid.
The restriction of this action to the Thompson group is an action $F\act K$ where $K$ is a group obtained by taking a limit of groups over the set of trees. Moreover, $F$ acts by group automorphisms and thus we can consider the semidirect product $K\rtimes F$. 
Consider now a group $G=G(\cG,\cR,\cK)$ constructed from a cloning system $(\cG,\cR,\cK)$.
%Comparison
{\bf We want to compare the groups $K\rtimes F$ and $G(\cG,\cR,\cK)$ and how they have been built.}

%Rk: contravariant and covariant
\subsubsection*{Second comparison: direct/inverse limit, semidirect product structure}
Note that here, if the functor $\Phi:\cF\to\Gr$ is covariant, then $K$ is a direct limit, while when the functor is contravariant, then we have that $K$ is an inverse limit. 
We will see that the construction using a cloning system shares some similarities with the construction of $K\rtimes F$ when the functor $\Phi:\cF\to\Gr$ is \textit{covariant}. 
However, we don't see any resemblance between the construction of a group using a cloning system and the construction of a group using a \textit{contravariant} functor.
%Jones gives a group equal to a semidirect product
Jones technology provides a semidirect product $K\rtimes F$ via the Jones action $F\act K.$
%Second comparison: cloning systems give a group
However, the group $G(\cG,\cR,\cK)$ obtained from a cloning system does not decompose into such a semidirect product in general, e.g.~the braided Thompson group $BV$.
%As mentioned earlier, it is a group issued from a bicrossed product and thus where there are mutual interactions between the forests and a group.
%This is well illustrated by the braided Thompson group where braids and forests are very much mixed with each other.
%This is a second key difference.

\subsection*{Covariant functors and pure cloning systems}

%Third comparison: Specialise to covariant functors into groups
\subsubsection*{Certain covariant functors provide pure cloning systems}

Consider now a covariant functor $\Phi:\cF\to\Gr$. 
Let us describe what is the amount of information that such a functor encodes. 
We will then relate this information with a cloning system.
%Let us analyse what is the data of this functor and let us try to relate this data to a cloning system.
%A family of groups
For each $n\geq 1$ we have a group $\Phi(n).$
For each forest $f\in\cF(n,m), 1\leq n\leq m$ we have a group morphism $\Phi(f):\Phi(n)\to\Phi(m)$ so that $\Phi(f\circ g)=\Phi(f)\circ\Phi(g)$ if $g\in\cF(k,n), 1\leq k\leq n.$
%Universal property
We will now use the universal property of the category of forests for considering generators of the forests rather than all forests.
The forests are generated by the elementary ones: $f_{k,n}, 1\leq k\leq n$ the forests with $n$ roots, $n+1$ leaves with all its tree trivial except the $k$th one that has two leaves. 
These elementary forests satisfy the following relations:
$$f_{k,n+1}\circ f_{j,n} = f_{j+1, n+1}\circ f_{k,n}, 1\leq k<j\leq n.$$
The category $\cF$ is the universal category presented by these generators and relations.
%A family of group morphisms + compatibility
Therefore, defining a covariant functor $\Phi:\cF\to\Gr$ is equivalent to defining a family of groups $(\Phi(n),\ n\geq 1)$ and a family of group morphism $(\Phi(f_{k,n}),\ 1\leq k\leq n)$ satisfying the relations
\begin{equation}\label{eq:Phi(f)}\Phi(f_{k,n+1})\circ \Phi(f_{j,n}) = \Phi(f_{j+1, n+1})\circ \Phi(f_{k,n}) , 1\leq k<j\leq n.\end{equation}
%$$\kappa_k^{n+1}\circ \kappa_j^n = \kappa_{j+1}^{n+1}\circ \kappa_k^n, 1\leq k<j\leq n.$$
%Maps are group morphisms
This is very reminiscent of a cloning system if we write $\Ga_n:=\Phi(n)$ and $\kappa_k^n:=\Phi(f_{k,n})$ for $1\leq k\leq n$.
A main difference being that in Jones framework, all the maps $\Phi(f_{k,n}), 1\leq k\leq n$ must be group morphisms (not necessarily injective) while they are any injective maps for cloning systems.
%Injective maps
%However, $\kappa_k^n,1\leq k\leq n$ does need to be injective in Jones framework while it must be for a cloning system.
Assume that $\Phi(f_{k,n})$ is injective for all $1\leq k\leq n$ which is equivalent to say that $\Phi(f)$ is injective for all forest $f$.
%No limits taken on $\cF$ or on $(\Ga_n,n\geq 1).$
In the cloning system framework there is a directed system $\cG$ associated to the groups $\Phi(n),n\geq 1$. In Jones framework there is no such thing in general.
%Maps into the symmetric groups
Define $\rho_n:\Ga_n\to S_n$ to be the trivial map for all $n\geq 1$.
One can check that $(\Phi(n),n\geq 1)$ together with a directed structure, $\cK:=(\Phi(f_{k,n}), 1\leq k\leq n)$ and $(\rho_n,n\geq 1)$ defines a cloning system. 
This cloning system is called \textit{pure} because the maps $\rho_n, n\geq 1$ are all trivial.
Moreover, the group $G$ associated to this cloning system is isomorphic to the fraction group $K\rtimes F$ obtained from the functor $\Phi$.

%Pure cloning system
\subsubsection*{Pure cloning systems provide covariant functors and more}

Conversely, consider a pure cloning system $(\cG,\cR,\cK)$.
Let us construct a (covariant) functor $\Phi:\cF\to\Gr.$
Condition C1 (see the axioms of a cloning system written above) is equivalent to have that $\cK$ is a collection of group morphisms. 
Condition C2 is equivalent to Equation \ref{eq:Phi(f)}. 
Therefore, the universality property of $\cF$ implies that there exists a unique covariant functor $\Phi:\cF\to\Gr$ satisfying
$$\Phi(n)=\Ga_n, \Phi(f_{k,n})=\kappa_k^n \text{ for all } 1\leq k\leq n.$$
%Injectivity
Note that since the cloning maps are injective by assumption so are the morphisms $\Phi(f_{k,n})$ for all $1\leq k\leq n$ implying that $\Phi(f)$ is injective for all forest $f$.
Condition 3 is empty in this situation.
Consider now the Jones action $\pi:F\act K$ associated to the functor $\Phi$.
Note that since the functor is not necessarily monoidal we cannot extend in general the Jones action to an action of $V$.
We obtain a semidirect product $K\rtimes F$.
One can show that $K\rtimes F$ is isomorphic to the group $G(\cG,\cR,\cK)$ obtained from the cloning system $(\cG,\cR,\cK).$

%More on the Jones side
%Lift of the functor
In fact we can do more.
%Limit group
Note that the directed system $\cG$ produces a limit group $\Ga_\infty:=\varinjlim_n\Ga_n$ and write $\iota_n:\Ga_n\to\Ga_\infty, n\geq 1$ for the inclusion maps. 
%Limit forest
Consider the monoid of forests $\cF_\infty$ and write $f_k\in \cF_\infty$ for the elementary forest having all its tree trivial except the $k$th one that has two leaves $k\geq 1.$
There is an obvious functor $j:\cF\to \cF_\infty$ that we now define.
The functor $j$ sends all object $n\geq 1$ of $\cF$ to the unique object of $\cF_\infty$ that we denote by $\infty$. 
The functor $j$ sends all finite forests $f$ of $\cF$ to the finitely supported forest $j(f)$ consisting of $f$ followed to its right by infinitely many trivial trees, i.e.~$j(f)=f\ot I^{\ot \infty}.$
Alternatively, using generators we can define $j:\cF\to\cF_\infty$ as the unique functor satisfying $j(n)=\infty$ and $j(f_{k,n})=f_k$ for all $1\leq k\leq n.$
%Hence, $\cF_\infty$ corresponds to the direct limit of the family of morphism spaces $(\cF(n,m),1\leq n\leq m)$ of the category of forests $\cF$.  
We now define a functor $\ov\Phi:\cF_\infty\to \Gr$ satisfying $\ov\Phi(\infty)=\Ga_\infty$ and
$$\ov\Phi(f_k)(\iota_n(g)):= \iota_{n+1}(\Phi(f_{k,n})(g)), 1\leq k\leq n, g\in \Ga_n.$$ 
The functor $\ov\Phi$ is in some sense obtained by looking at the functor $\Phi$ at infinity. 
We say that $\ov\Phi$ is a \textit{lift} of the functor $\Phi.$
%Since the monoid $\cF_\infty$ is generated by the elementary forests $f_k,k\geq 1$ we have uniquely define a functor $\ov\Phi:\cF_\infty\to\Gr$ or 
Without talking about functors and categories we have defined an action by group automorphisms of the monoid $\cF_\infty$ on the group $\Ga_\infty.$
%Zappa-Szep product
We realize that the bicrossed product $\cF_\infty\bowtie \Ga_\infty$ induced by the cloning system is in fact a usual semidirect product $\cF_\infty\ltimes \Ga_\infty$. 
The action $\cF_\infty\act \Ga_\infty$ used to construct the semidirect product is precisely the one given by the functor $\ov\Phi.$

%Conclusion
In conclusion, a pure cloning system gives an action $\cF_\infty\act \Ga_\infty$ and provides a group $G$.
Moreover, the pure cloning system defines a covariant functor $\Phi:\cF\to\Gr$ whose associated fraction group $K\rtimes F$ is isomorphic to $G$.
Conversely, a covariant functor $\Phi:\cF\to\Gr$ admitting a lift $\ov\Phi:\cF\to\Gr$ so that $\Phi(f)$ is injective for all forests provides a pure cloning system.
Hence, morally covariant functors $\Phi:\cF\to\Gr$ and pure cloning systems are very similar. 
They both produce a class of groups and these two classes contain a large common subclass.
Note that all these groups are built from $F$ rather than other Thompson groups since we did not incorporated any permutations or braids to the forests.

\subsection*{Incorporating permutations and braids}

One of the main difference between the two technologies resides in incorporating permutations or braids in the picture. 
It means that the group constructed will look like more of an extension of $F,T,V$.
The technology of cloning systems seems in general better adapted to this purpose than Jones actions and able to produce interesting groups containing $F,T,V$ and far from being semidirect products like in the Jones construction.

%Fourth: From F to V or BV or other
%Huge difference: J: tensor permutation trick for monoidal or enlarge initial cat
%Clone: encode in the cloning maps (action of $\cF_\infty$ on $\Ga_\infty$).

%Specific constructions and equality of the groups?
%Cloning maps are group morphisms
%Monoidal functors in Jones tech: F and V cases

%Cloning maps are not group morphisms
%A very interesting use of cloning systems is to construct groups using cloning maps that are not group morphisms and thus necessarily are not pure.
If $G$ is a group produced via a cloning system, then the properties of the cloning maps will decide if $G$ is closer to Thompson group $F,T,V$ or something in between (e.g.~$V^{mock}$ built via mock permutations) or beyond (e.g.~$BV$). 
If the cloning maps are not group morphisms, then necessarily the maps $\rho_n:\Ga_n\to S_n, n\geq 1$ are nontrivial. 
The data of the cloning systems is then more complex and produce groups closer to the larger Thompson groups $T,V,BV$, etc.
This is a key feature of this formalism that works very differently with respect to the one of Jones.

%Jones: enlarge the initial category
In Jones formalism, there are two ways to obtain actions of larger groups than $F$.
First, if one has a monoidal functor $\Phi:\cF\to\Gr$, then the Jones action $F\act K$ can be extended to $V\act K$ via permuting tensors.
Second, in the non-monoidal context we enlarge the initial category $\cF$.
This means that instead of considering the category of forests $\cF$ we will consider a larger category: for instance the category of affine forests $\mathcal{AF}$ which corresponds to forests together with cyclic permutations of their leaves.
Formally, the morphism spaces $\mathcal{AF}(n,m)$ are in bijection with $\cF(n,m)\times \Z/m\Z$ for $1\leq n\leq m.$
To apply Jones technology we consider functors $\Phi:\mathcal{AF}\to \Gr$ starting from $\mathcal{AF}.$ 
Such a functor provides a semidirect products $K\rtimes T$.
Now, replacing $\Z/m\Z,m\geq 1$ by the group of all permutations $S_m$, mock permutations $S_m^{mock}$, braid groups with $m$ strands $B_m$, etc.,  we obtain larger categories $\mathcal{VF},\mathcal{F}^{mock}, \mathcal{BV}$. 
A functor from one of these categories $\mathcal{AF},\mathcal{VF},\mathcal{F}^{mock}, \mathcal{BV}$ to the category of groups produces a semidirect product $K\rtimes H$ where $H$ is equal to $T,V,V^{mock},BV$ respectively.
Some cloning systems where the cloning maps are not group morphisms will sometime corresponds to a functor $\Phi:\cC\to\Gr$ where $\cC$ is one of these larger categories of forests. 
However, there are no longer a clear correspondence between the two frameworks.
The defect of the cloning maps of not being group morphisms is, in the Jones framework, hidden in the categorical structure of $\cC$; structure which is more complex than the categorical structure of $\cF$. 
Of course, the more $\cC$ is complex (in terms of its presentation), the more difficult it is to construct functors $\Phi:\cC\to\Gr$.

%Comparison: same monoid of forests
Transposing this functorial approach in cloning systems would correspond to replacing the monoid of forests $\cF_\infty$ by a larger monoid such as $\mathcal{VF}_\infty$ incorporating finitely supported permutations. 
We would then obtain a $V$-like group via the construction of a Brin-Zappa-Szep product $\mathcal{VF}_\infty\bowtie \Ga_\infty.$
The construction of Witzel and Zaremsky is not following this scheme but rather makes the group $\Ga_\infty$ and the cloning maps more complex but always keeping the same monoid of forests $\cF_\infty.$

%Monoidal functors, action of V
\subsection*{Constructing the groups considered in this article with cloning systems}

We end this appendix by explaining how the groups $K\rtimes V$ considered in this article can be constructed using cloning systems. 
It is interesting to compare the constructions of $K\rtimes F$ and $K\rtimes V$ using cloning systems which we will do at the end of this analysis.
Consider a covariant monoidal functor $\Phi:\cF\to\Gr$ as considered in this present article. 
This functor is completely described by the triple $(\Ga,\al_0,\al_1)$ where $\Ga:=\Phi(1)$ is a group and $(\al_0,\al_1):=\Phi(Y)\in\Hom(\Ga,\Ga\oplus\Ga)$ is a group morphism.
This produces a Jones action $F\act K$ that extends to an action $V\act K$ using the monoidality of $\Phi.$
%It is possible to construct $K\rtimes V$ using a cloning system but already here one has to consider cloning maps that are not group morphisms.
Since $\Phi$ is monoidal we can assume, using an argument due to Tanushevski, that $g\mapsto (\al_0(g),\al_1(g))$ is injective, see Proposition \ref{prop:Tanushevski}.
Define $\Ga_n:=\Ga^n\rtimes S_n$ where $S_n\act \Ga^n$ acts by permuting indices $n\geq 1.$
There is an obvious directed system structure on $(\Ga_n,n\geq 1)$ given by the inclusions $\{1,\cdots,m\}\subset \{1,\cdots,n\}$ for $1\leq m\leq n.$
One can see that the direct limit of this system is the restricted permutational wreath product $\Ga_\infty:= \oplus_{n\in \N}\Ga\rtimes S_\N$ where $S_\N$ is the group of finitely supported permutations on the set of nonzero natural numbers $\N$ acting by shifting indices. 
Define $\rho_n:\Ga_n\to S_n$ to be the quotient maps for $n\geq 1$.
Now for $1\leq k\leq n$ and $g\in \Ga^n$ we define $\kappa_k^n(g):=\Phi(f_{k,n})(g)\in \Ga^{n+1}$ and thus its $j$th entry $1\leq j\leq n+1$ is $$\kappa_k^n(g)_j=\begin{cases} g_j \text{ if } 1\leq j<k\\ \al_0(g_j) \text{ if } j=k\\ \al_1(g_j) \text{ if } j=k+1 \\ g_{j-1}$ if $k+2\leq j\leq n+1\end{cases}.$$
In particular, the cloning map restricted to direct sums of $\Ga$ are group morphisms.
However, they are not when applied to permutations. 
Given $\sigma\in S_n$ one can define $\kappa_k^n(\sigma)\in S_{n+1}$ in a diagrammatic way by 2-cabling the $k$th strand of $\sigma$, see \cite[Example 2.2]{Zaremsky18-clone} or \cite[Section 2]{Brothier19WP} for details.
Finally, define $\kappa_k^n(g\sigma):=\kappa_k^n(g)\kappa_k^n(\sigma)$ for $1\leq k\leq n$, $g\in\Ga^n, \sigma\in S_n$.
We have defined a cloning system and one can check that the associated group $G$ is isomorphic to $K\rtimes V$. 

%K\rtimes F
Using cloning systems, the construction of $K\rtimes F$ is much easier than the one of $K\rtimes V$. 
To obtain $K\rtimes F$ we would consider the group $\Ga_n:=\Ga^n$ rather than $\Ga^n\rtimes S_n$, trivial maps $\rho_n:\Ga_n\to S_n$, and consider the restriction of the cloning maps of above to $\Ga^n$ for $n\geq 1.$ 
In particular, this cloning systems is pure as previously observed.

\subsection{Tanushevski's construction}

%Intro
Tanushevski has independently developed a framework for constructing a large class of groups \cite{Tanushevski16,Tanushevski17}. 
He constructed in a very concrete way a large family of groups for which he successfully studied, among others, questions regarding the lattice of normal subgroups, generators, presentations and finiteness properties.
We are going to see that his construction can be completely reinterpreted using Jones technology.

%Initial data: a group and a morphism 
\subsubsection*{Construction of Tanushevski}

Consider any group $\Ga$ and any group morphism $\al:\Ga\to\Ga\oplus\Ga, g\mapsto (\al_0(g),\al_1(g)).$
Using $(\Ga,\al)$, Tanushevski constructs a limit group $\sqcap_\al(\Ga)$ and an action $F\act \sqcap_\al(\Ga).$
Moreover, he notes that this action can be extended into $V\act \sqcap_\al(\Ga).$
He then considers the following three semidirect products 
$$\cF_\al(\Ga):= \sqcap_\al(\Ga)\rtimes F, \mathcal{T}_\al(\Ga):=\sqcap_\al(\Ga)\rtimes T \text{ and } \mathcal{V}_\al(\Ga):=\sqcap_\al(\Ga)\rtimes V.$$
He provides descriptions of elements of $\cF_\al(\Ga):= \sqcap_\al(\Ga)\rtimes F$ using pairs of trees whose leaves are labelled by elements of $\Ga.$

%Comparison
\subsubsection*{Comparison of the two constructions}

We now compare this construction and description using the technology of Jones.
Having a group $\Ga$ and a morphism $\al:\Ga\to\Ga\oplus\Ga$ is equivalent to have a covariant monoidal functor 
$\Phi:\cF\to\Gr$
from the category of forests to the category of groups equipped with direct sums for its monoidal structure.
The group $\sqcap_\al(\Ga)$ and the action $F\act \sqcap_\al(\Ga)$ are the direct limit group of Jones and the Jones action respectively that we have denoted in this article by $K$ and $\pi:F\act K$ respectively.
The extension of the action $F\act \sqcap_\al(\Ga)$ to an action $V\act \sqcap_\al(\Ga)$ of the Thompson group $V$ is the exact same extension that we have been using in this article.
%It is done by permuting entries in the groups $\Ga_t,t\in\fT$ and can be done because the functor $\Phi$ is monoidal.
The semidirect products $\cF_\al(\Ga)$ and the description of the elements as labelled pairs of trees is exactly the same than the one we gave in \cite{Brothier19WP}. 
%
%Conclusion
It is remarkable to find the same construction of fraction groups in the work of Tanushevski. 
%Difference
Hence, the construction and diagrammatic description of a group of Tanushevski is equivalent to considering a Jones action and the diagrammatic description of the author applied to a covariant monoidal functor $\Phi:\cF\to\Gr.$ 
As observed in the previous section, this is equivalent to consider pure cloning systems but requiring an additional property on the cloning which translates the monoidality of the functor $\Phi.$

%Works
\subsubsection*{Comparison of questions studied}
The construction of Tanushevski and the construction of groups presented in this article are rather identical and do not need any further comparison. 
We then present how different are the questions studied by Tanushevski and the author.
The two studies are luckily and interestingly almost completely disjoint and thus are complementary. 
The author has been considering analytical properties (such as the Haagerup property), classifications up to isomorphisms and description of automorphism groups for the groups $K\rtimes V$.
Tanushevski studied rather exclusively the class of groups $\sqcup_\al(\Ga)\rtimes F$ (written $K\rtimes F$ in our article) studying questions regarding finiteness properties and regarding the lattice of normal subgroups of $\sqcup_\al(\Ga)\rtimes F$ for instance.
%This provides a very nice complementary analysis to ours.
Some of his work can easily be adapted to our larger group $K\rtimes V$ and for instance we used it to obtain Proposition \ref{prop:Tanushevski}.
He provides explicit presentations of $K\rtimes F$ that can then be completed into presentations of $K\rtimes T$ and $K\rtimes V$.
For example, given a finite presentation of a group $\Ga$ and an injective group morphism $\al=(\al_0,\al_1)$, thanks to Tanushevski, one can write an explicit finite presentation to $K\rtimes F$ and to $K\rtimes T$ and $K\rtimes V$.

%Open question
%Finteness prop
Tanushevski provides a very strong results on finiteness properties.
Fix a natural number $n$ and a group $\Ga$.
He proved that the fraction group $K\rtimes F$ associated to $(\Ga,\id_\Ga,\id_\Ga)$ (in our language) is of type $F_n$ if and only if $\Ga$ is of type $F_n$. 
%Note that this corresponds to the subgroup $K\rtimes F$ of the untwisted fraction group $K\rtimes V$ associated to $\Ga.$
We do not know if this result extends to the larger semidirect products $K\rtimes V$ nor if this extends to twisted fraction groups, i.e.~to fractions constructed from triples $(\Ga,\al_0,\al_1)$ where $(\al_0,\al_1)\neq (\id_\Ga,\id_\Ga).$
%Isomorphisms/automorphisms
Moreover, it would be interesting to know if some of the theorems we proved regarding classification of the groups $K\rtimes V$ and the description of their automorphism groups could be adapted to $K\rtimes F$ (or other semidirect products like $K\rtimes T, K\rtimes V^{mock}$, etc). 
Although, it is not that clear that there is a rigidity phenomena regarding isomorphisms when we consider semidirect products $K\rtimes F$.
Indeed, we are using in our proofs that the action $V\act \Q_2$ is transitive in a very strong sense. 
However, the action $F\act \Q_2$ is not even transitive. 
Moreover, the difference of complexity between the automorphism groups $\Aut(F)$ and $\Aut(V)$ suggests that the automorphism group of $K\rtimes F$ may differ greatly from the automorphism group of $K\rtimes V$ \cite{Brin96-chameleon,BCMNO19}.
This makes these questions even more interesting and intriguing.

%%%%%%%%%%%%%%%%%%%%%%%%%%%%%%%%%%%%%%%%%%%%%%%%%

%\bibliographystyle{alpha}
%\bibliography{biblio-LG}

\newcommand{\etalchar}[1]{$^{#1}$}

\end{document}